\RequirePackage{fix-cm}
\documentclass[smallextended]{svjour3}       
\smartqed  
\usepackage{amsmath,multirow}
\usepackage{amssymb}
\usepackage{amscd}
\usepackage{amsfonts}
\usepackage{enumerate}
\usepackage{mathrsfs}
\usepackage{xypic}
\usepackage{stmaryrd}
\usepackage{graphicx}
\usepackage{fancybox}
\usepackage{color}
\usepackage{amsmath}
\usepackage{exscale}
\usepackage{enumitem,array}%
\usepackage{amssymb}
\usepackage{amsmath}
\usepackage{amssymb}
\usepackage{amscd}
\usepackage{amsfonts}
\usepackage{enumerate}
\usepackage{mathrsfs}
\usepackage{xy}
\usepackage{stmaryrd}
\usepackage{graphicx}
\usepackage{fancybox}
\usepackage{color}
\usepackage{multirow}
\usepackage{exscale}
\usepackage{enumitem,array}%
\usepackage{subfigure}
\usepackage{extarrows}
\usepackage{amsfonts}
\usepackage{xypic}
\usepackage{tikz}
\usepackage{algorithm}
\usepackage{algorithmicx}
\usepackage{algpseudocode}
\usepackage{manyfoot}
\usepackage{booktabs}
\usepackage{bm}
\usepackage{amssymb}
\usepackage{verbatim}
\usepackage{graphicx}
\usepackage{amsopn}
\usepackage{hyperref}
\hypersetup{
    colorlinks=true,
    linkcolor=blue,       
    citecolor=blue,       
    urlcolor=blue,       
}

\usepackage{epstopdf}
\usepackage{epsfig,multirow}
\usepackage{mathtools}
\usepackage{tikz}

\DeclarePairedDelimiter{\norm}{\lVert}{\rVert}

\newcommand{\fe}{\mathrm{e}}
\renewcommand{\(}{\left(}
\renewcommand{\)}{\right)}
\newcommand{\eps}{\varepsilon}
\newcommand{\bR}{\bm{R}}

\newcommand{\abs}[1]{\left\vert#1\right\vert}

\newcommand{\bx}{\bm{x}}
\newcommand{\bv}{\bm{v}}
\newcommand{\bu}{\bm{u}}
\newcommand{\bw}{\bm{w}}
\newcommand{\bz}{\bm{z}}
\newcommand{\hbz}{\hat{\bm{z}}}
\newcommand{\mf}{\bm{f}}
\newcommand{\by}{\bm{y}}
\newcommand{\hby}{\hat{\bm{y}}}

\newcommand{\br}{\bm{r}}
\newcommand{\bp}{\bm{p}}
\newcommand{\bq}{\bm{q}}
\newcommand{\mI}{\mathcal{I}}

\newcommand{\bZ}{\bm{Z}}

\newcommand{\ii}{\textmd{i}}

\begin{document}

\title{A novel class of high-order uniformly accurate exponential integrators with local linear extension for the charged-particle dynamics under strong magnetic field \thanks{This work was supported by the National Key Research and Development Program of China (Grant No. 2021YFA1003002), National Natural Science Foundation of China (Grant No. 12371403) and Shaanxi Fundamental Science Research Project for Mathematics and Physics (Grant No. 25JSY046).}
}

\titlerunning{High-order UA exponential integrators with LLE for CPD}        

\author{Lina Wang        \and
        Bin Wang \and Beibei Zhu
}

\authorrunning{L. Wang, B. Wang, B. Zhu} 

\institute{L. Wang \at
              School of Mathematics and Statistics, Xi'an Jiaotong University, 710049 Xi'an, China \\
              \email{wanglina@stu.xjtu.edu.cn}           
           \and
           B. Wang \at
          School of Mathematics and Statistics, Xi'an Jiaotong University, 710049 Xi'an, China \\
           \email{wangbinmaths@xjtu.edu.cn}
             \and
           B. Zhu \at
           Corresponding author. School of Mathematics and Physics, University of Science and Technology Beijing, 100083 Beijing, China \\
           \email{zhubeibei@ustb.edu.cn}
}

\date{Received: date / Accepted: date}

\maketitle

\begin{abstract}
In this paper, we develop a novel class of high-order uniformly accurate exponential integrators for  charged-particle dynamics under a strong magnetic field. The small parameter $0<\eps\ll 1$ induces rapid temporal oscillations, rendering traditional numerical methods prohibitively expensive due to severe step-size restrictions. To address this issue, a linearization technology that introduces auxiliary polynomial variables is employed to recast the original charged-particle dynamics as a higher-dimensional system. Classical exponential integrators are subsequently applied to this augmented formulation, which inherently carries richer structural information, thereby yielding a family of uniformly accurate exponential integrators that can reach arbitrarily high order without requiring any order conditions. For the maximal ordering scaling strong magnetic field,  we rigorously demonstrate via algebraic techniques that the proposed schemes with auxiliary polynomial variables of degree $k\ (k\geq 2)$ achieve an $\mathcal{O}(\eps h^{k+1})$ improved error estimate for the position and a uniform $\mathcal{O}(h^{k+1})$ error estimate for the velocity. Numerical experiments validate the advantages of the methods. The theoretical and numerical investigation is finally extended to relativistic charged-particle dynamics in a four-dimensional framework with maximal ordering scaling strong magnetic field.

\keywords{Local linearization \and Charged-particle dynamics \and Exponential integrator \and Uniform accuracy }

\subclass{65L05 \and 65L20  \and 65L70 \and 78A35}
\end{abstract}

\section{Introduction}

Charged particles play a crucial role in a wide range of fields, including plasma physics, astrophysics, and magnetic confinement fusion devices \cite{AKN,BS,L}. In plasma physics modeling, single charged particles are treated as part of the kinetic description of plasmas using the Particle-in-Cell (PIC) method. In magnetically confined fusion devices such as tokamak, strong external magnetic fields must be applied to confine the particles to the desired trajectories, which typically introduces additional time scales into the simulation and led to the complexity of the calculations. Consequently, the numerical computation of charged-particle dynamics (CPD) in the regime of strong magnetic fields has attracted considerable research attention.

The Vlasov equation with external and explicitly given electric-field function $E:(t,\bx)\in\mathbb{R}^{+}\times\mathbb{R}^{2}\rightarrow E(t,x)\in\mathbb{R}^2$ is given by (\cite{CCLMZ1,CCLMZ2})
\begin{subequations}\label{equ-18-1}
\begin{align}
&\partial_{t}f(t,\bx,\bv)+\bv\cdot \nabla_{\bx}f(t,\bx,\bv)+\left(E(t,\bx)+\frac{b(\bx)}{\eps}J\bv\right)\cdot\nabla_{\bv}f(t,\bx,\bv)=0, \label{equ-18-1a} \\
&f(0,\bx,\bv)=f_{0}(\bx,\bv), \quad (\bx,\bv)\in\mathbb{R}^2\times\mathbb{R}^2, \label{equ-18-1b}
\end{align}
\end{subequations}
where $J=\begin{pmatrix}
           0 & 1 \\
           -1 & 0
         \end{pmatrix}$, 
$f:(t,\bx,\bv)\in[0,T]\times\mathbb{R}^2 \times\mathbb{R}^2\rightarrow f(t,\bx,\bv)\in\mathbb{R}$ denotes the unknown distribution function, and $b(\bx), f_{0}(\bx,\bv)\in\mathbb{R}$ are given electric function and initial distribution. Using the PIC discretization, 
\begin{equation*}
f(t,\bx,\bv)\approx\sum\limits_{k=1}^{N_{p}}\omega_{k}\delta(\bx-\bx_{k}(t))\delta(\bv-\bv_{k}(t)), \ t\geq 0, \ \bx,\bv\in\mathbb{R}^2,
\end{equation*}
we get the characteristic equation for $1\leq k\leq N_{p}$,
\begin{equation}\label{equ-18-2}
\begin{aligned}
&\dot{\bx}_{k}(t)=\bv_{k}(t), \ \dot{\bv}_{k}(t)=\frac{b(\bx_{k}(t))}{\eps}J\bv_{k}(t)+E(t,\bx_{k}(t)), \\
&\bx_{k}(0)=\bx_{k,0}, \ \bv_{k}(0)=\bv_{k,0}.
\end{aligned}
\end{equation}
Without loss of generality, in this paper, we focus on the CPD with the same form as \eqref{equ-18-2}
\begin{equation}\label{2d}
\begin{aligned}
&\dot{\bx}(t)=\bv(t),  \quad \bx(0)=\bx_{0}, \\
&\dot{\bv}(t)=\frac{b(\bx)}{\eps}J\bv(t)+E(\bx(t)), \quad \bv(0)=\bv_{0},
\end{aligned}
\end{equation}
for both constant strong magnetic field $b(\bx)=C$ and maximal ordering scaling strong magnetic field $b(\bx)=b(\eps\bx(t))$. A basic hypothesis adopted throughout this work is that the magnetic field is assumed to be uniformly bounded away from zero, i.e., there exists a constant $c_{0}>0$ such that $b(\bx)\geq c_{0}$ for all $\bx\in\mathbb{R}^2$.
Here $\bx(t),\bv(t)\in\mathbb{R}^2$ are the unknown and $C$ is a constant, and $E(\bx)=(E_{1}(\bx),E_{2}(\bx))=-\nabla U(\bx)$ is generated by a given scalar potential $U(\bx)$. As is well known, the energy $$H(\bx,\bv)=\frac{1}{2}\abs{\bv}^2+U(\bx)$$ of \eqref{2d} is conserved along the solution. 

Numerous numerical algorithms have been developed for the CPD. In the general magnetic field with $\eps=1$, the Boris algorithm \cite{Boris}, first introduced in 1970, remains in widespread use due to its excellent numerical performance. Subsequent investigations have increasingly emphasized structure-preserving methods, including time-symmetric algorithm \cite{HL}, symplectic or K-symplectic algorithm \cite{HZSLQ,Tao,Webb,ZQTLHX}, and energy-preserving algorithm \cite{BIZ,BMR}. For the CPD in the strong magnetic field regime, the parameter $\eps\ll 1$ induces highly oscillatory behavior in time. Conventional numerical methods require a step size smaller than the particle gyration period to accurately capture the particle trajectory, resulting in a severe computational burden. This challenge has motivated recent research to focus on CPD in the strong magnetic field and to develop new methods, including the filtered Boris algorithm \cite{HLW}, the large-stepsize integrator \cite{HLS}, a class of splitting methods \cite{WZ}, and splitting methods \cite{LW} that incorporate relaxation techniques. 
 
For highly oscillatory stiff systems, exponential integrators have emerged over recent decades (\cite{CM,HOS,Tok,HO2,MW}) as a powerful means of overcoming step-size restrictions. By employing the variation-of-constants formula, exponential integrators handle the linear part exactly. Notable developments include exponential time differencing \cite{CM,KT} and exponential Runge-Kutta \cite{HO1}. For second-order problems, specialized schemes   have been introduced \cite{F,WWX}. Subsequently, exponential integrators have incorporated local linearization \cite{Tok,CBCOJ,KO} to improve accuracy through the Taylor expansion of nonlinear terms, leading to methods such as local linearization Runge–Kutta \cite{CBJC} and Rosenbrock-type exponential integrators \cite{HOS}. However, because the solutions of highly oscillatory systems are typically non-smooth in time, the errors of conventional exponential integrators remain dependent on $\eps$.
 
In order to achieve uniform accuracy, various multi-scale methods have been proposed to achieve uniform accuracy for highly oscillatory equations. The multi-revolution composition method \cite{PMMV,PCZ} and the stroboscopic averaging method \cite{CLMV,CCMM} rely on period-averaging strategies. The nested Picard iterative method guarantees uniform accuracy by repeatedly applying the constant variation formula \cite{CG,CW}, whereas the multi-scale exponential wave integrator is derived from the asymptotic expansion of the exact solution \cite{BC,FY}. The two-scale method \cite{CCLM} decouples the fast and slow scales, treating the fast scale as an independent periodic variable. Recently, for the CPD with strong magnetic fields, a two-scale method with improved accuracy exceeding the uniform accuracy is proposed in \cite{WJ} for two-dimensional CPD with a non-uniform strong magnetic fields. Utilizing PIC discretization, some two-scale methods with uniform accuracy have been proposed to solve the Vlasov equation under strong magnetic fields \cite{CCLMZ1,CCLMZ2,CLMZ1,CLMZ2}. This approach generally assumes that the eigenvalues of linearization matrix are integer multiples of a common frequency, making it inapplicable to multi-frequency problems where the frequencies lack a common period.

Very recently, a locally linear extension exponential integrator was proposed in \cite{QDF} for highly oscillatory non-autonomous first-order differential equations. At each step, a polynomial auxiliary variable is used to construct a high-dimensional extension system in which the $k$-th order Taylor expansion of the nonlinear term becomes linear. An exponential integrator with the variation-of-constants formula then solves the linear part exactly, and the result is projected back to the original space.  This dimensional lifting transforms higher-order Taylor components into linear terms that can be handled exactly, thus achieving uniform accuracy. 

Inspired by this idea \cite{QDF}, we extend the method to general first-order highly oscillatory systems whose linearized matrices are diagonalizable with eigenvalues having zero real parts, and apply it to CPD under a strong magnetic field. Furthermore, we relax the initial data requirement of  the CPD system to merely boundedness; in contrast, the formulation in \cite{QDF} requires small initial data, specifically $\bu(0)=\varepsilon^{\nu}\bu_{\text{in}}$ with $\nu\geq 1$. A rigorous theoretical analysis establishes that for the 2D CPD under a constant or maximal ordering scaling strong magnetic field, the method achieves improved accuracy of $\mathcal{O}(\eps h^{k+1})$ in position and uniform accuracy of $\mathcal{O}(h^{k+1})$ in velocity utilizing the $k\  (k\geq 2)$ auxiliary variables, where $h$ is the stepsize. 
The main contributions of this work are summarized as follows:
\begin{itemize}
 \item We construct the integrators for CPD  where the linear dynamics exhibit zero eigenvalue. This relaxes the requirement that all eigenvalues of matrix $A$ be purely imaginary in \cite{QDF}.
\item The proposed methods handle highly oscillatory CPD requiring only bounded initial data, removing the need for the small initial data assumption ($\bu(0)=\mathcal{O}(\varepsilon^\nu)$) required in prior works.
\item Without relying on traditional order conditions, the scheme achieves $\mathcal{O}(h^{k+1})$ accuracy using $k\ (k\geq 2)$ auxiliary variables, allowing for arbitrarily high orders. Furthermore, it is fully explicit and computationally efficient.
\item Rigorous convergence analysis proves that for the  CPD under  maximal ordering scaling strong magnetic field, the method yields $\mathcal{O}(\varepsilon h^{k+1})$ accuracy in position and uniform $\mathcal{O}(h^{k+1})$ accuracy in velocity. The methods are extended to the relativistic CPD within a 4D framework.
\end{itemize}

The remainder of this paper is organized as follows. In Section \ref{sec-2}, a new class of high-order local linear extension exponential integrators is constructed through a reformulation of the 2D CPD system. Section \ref{sec-3} rigorously establishes convergence results for the position and velocity. Numerical experiments confirming the method's effectiveness are presented in Section \ref{sec-4}. In Section \ref{sec-5}, the method is extended to the relativistic CPD in the 4D framework, with corresponding theoretical results and numerical tests provided.

\section{The local linear extension exponential integrators}\label{sec-2}

In this section, we present the construction method for the local linear expansion system of CPD \eqref{2d} and the corresponding exponential integrators. 

As a preliminary step in the theoretical analysis, we introduce the basic definitions of multi-indices and index sets, framed within an algebraic context. For positive integers $k$ and $d$, the index set $\mI_{d+1}^{[[k]]}$ is defined as the $k$-fold Cartesian product $\mI_{d+1}^{[[k]]}:=\{1,\cdots,d+1\}^{k}$. Each element $\xi\in\mI_{d+1}^{[[k]]}$ is a multi-index of length $\abs{\xi}=k$, where $\abs{\cdot}$ denotes the length of a multi-index. By convention, for the case $k=0$, we define $\mI_{d+1}^{[[0]]}=\{\emptyset\}$, representing the singleton set containing only the empty index with no components. We proceed by defining an equivalence relation on $\mI_{d+1}^{[[k]]}$. Arranging coordinates in ascending order yields the subset 
$$\tilde{\mI}_{d+1}^{[[k]]}:=\{\xi=(\xi_{1},\cdots,\xi_{k})\in \mI_{d+1}^{[[k]]}:\xi_{1}\leq \cdots\leq \xi_{k}\},$$ which serves as a natural set of representatives for the equivalence classes. The representative multi-indices of $[\xi]\in\tilde{\mI}_{d+1}^{[[k]]}$ is denoted by $\bar{\xi}$. Let $\mI_{d+1}^{[k]}:=\bigcup_{j=0}^{k} \mI_{d+1}^{[[j]]}$. Given a point $\hat{\bz}\in\mathbb{C}^{d}$, we associate to each multi-index $\xi \in \mI_{d+1}^{[k]}$ the polynomial
\begin{equation}\label{equ-20-2}
(\bz-\hat{\bz})^{\xi}:=\begin{cases}
                         1, & \mbox{if} \ \abs{\xi}=0, \\
                          (z_{\xi_{1}}-\hat{z}_{\xi_{1}})\cdots (z_{\xi_{j}}-\hat{z}_{\xi_{j}}), & \mbox{if} \ \abs{\xi}=j\geq 1,
                       \end{cases}
\end{equation}
where $z_{i}$ and $\hat{z}_{i}$ represent the $i$-th component of $\bz$ and $\hat{\bz}$, respectively. Having established the necessary preliminaries, we proceed to define the local linear extension variables in the following manner.



\begin{definition}(\cite{QDF})\label{def-2}
$(i)$ We first denote the polynomial sets of exact degree $j$ and cumulative degree up to $k$ of the form \eqref{equ-20-2}
\begin{equation}\label{equ-20-3}
P_{\bz}^{[[j,\hat{\bz}]]}=\left\{(\bz-\hat{\bz})^{\bar{\xi}}:\bar{\xi} \in\tilde{\mI}_{d+1}^{[[j]]}\right\}, \quad
P_{\bz}^{[k,\hat{\bz}]}=\bigcup_{j=0}^{k}P_{\bz}^{[[j,\hat{\bz}]]}.
\end{equation}
With $\abs{\cdot}$ denoting the cardinality of the set, let $D^{[k]}=\abs{P_{\bz}^{[k,\hat{\bz}]}}$. The $k$-th order local linear extension variable $\bz^{[k,\hat{\bz}]}\in \mathbb{R}^{D^{[k]}}$ is then defined as follows: the first component is $1$, components $2$ through $d+2$ coincide with $\bz-\hat{\bz}$, and the remaining components are assigned in any prescribed order. In particular, we choose
\begin{align}\label{lex-order}
\bz^{[k,\hat{\bz}]}=\left(\bz^{[[0,\hat{\bz}]]},(\bz^{[[1,\hat{\bz}]]})^{\intercal},\cdots,(\bz^{[[k,\hat{\bz}]]})^{\intercal}\right)^{\intercal}
\end{align}
according to the standard lexicographical order.

$(ii)$ For $j=0,\cdots,k$, let $\bz^{[[j,\hat{\bz}]]}$ denote the $D^{[[j]]}$-dimensional vector whose components are the polynomials of exact degree $j$ from $P_{\bz}^{[[j,\hat{\bz}]]}$, where $D^{[[j]]}=\abs{P_{\bz}^{[[j,\hat{\bz}]]}}$. To extract specific components from a $D^{[k]}$-dimensional vector, we define the projection matrix
\begin{equation*}
\Pi_{m_1}^{m_2}=(\mathbf{0}_{m_{1}\times (m_{1}-1)}, \bm{I}_{m\times m}, \mathbf{0}_{m\times D^{[k]}-m_{2}}) \ \mbox{with} \ m=m_{2}-m_{1}+1.
\end{equation*}
Here, $\bm{0}_{d_{1}\times d_{2}}$ and $\bm{I}_{d_{1}\times d_{2}}$ denote the $d_{1}\times d_{2}$ zero matrix and identity matrix, respectively. In particular, we have $\bz^{[[0,\hat{\bz}]]}=\Pi_{1}^{1} \bz^{[k,\hat{\bz}]}=1$ and $\bz^{[[1,\hat{\bz}]]} =\Pi_{2}^{d+2}\bz^{[k,\hat{\bz}]}=\bz-\hat{\bz}$.

$(iii)$ The partial differential operator induced by the multi-index $\xi\in\mI_{d+1}^{[k]}$ is defined as
\begin{equation*}
\frac{\partial^{\xi}}{\partial\bz^{\xi}}:=\begin{cases}
                                            \bm{I}_{D^{[k]}\times D^{[k]}}, & \mbox{if} \ \abs{\xi}=0, \\
                                            \frac{\partial^{j}}{\partial z_{\xi_{1}}\cdots \partial z_{\xi_{j}}}, & \mbox{if} \ \abs{\xi}=j\geq 1.
                                          \end{cases}
\end{equation*}
\end{definition}

Based on the preceding preparations, we present the  construction process of the local linear extended exponential integrator, which   proceeds in three steps: \\
\textbf{Step 1.} The system is first reformulated into the compact form \eqref{LLES} by treating time $t$ as the new independent variable. \\
\textbf{Step 2.} The nonlinear part of \eqref{LLES} is subsequently expanded in a Taylor series with respect to all variables, yielding new variables defined as higher-order polynomials of the unknown components, for which the governing equations are derived. \\
\textbf{Step 3.} At each time step, the augmented high-dimensional system is integrated using a classical exponential integrator. The resulting solution is then projected back onto the original CPD system via a projection operator.

\subsection{Reformulation of the CPD system}
We begin by providing a reformulation of the CPD system. We mainly consider the case of maximal ordering scaling, and the constant strong magnetic field is merely a simplified form of the maximal order. 
Denoting $b_{0}=b(\eps\bx_{0})$, the system \eqref{2d} can be written as
\begin{equation}\label{equ-19-1}
\begin{aligned}
&\dot{\bx}(t)=\bv(t),  \quad \bx(0)=\bx_{0}, \\
&\dot{\bv}(t)=\frac{b_{0}}{\eps}J\bv(t)+\frac{b(\eps\bx(t))-b_{0}}{\eps}J\bv(t)+E(\bx(t)), \quad \bv(0)=\bv_{0},
\end{aligned}
\end{equation}
In order to obtain the most accurate error estimation, we introduce the variable transformation 
$\bw=\eps\bv$, \eqref{equ-19-1} can be changed into
\begin{equation}\label{2d-theo}
\begin{aligned}
&\dot{\bx}(t)=\frac{\bw(t)}{\eps}, \quad \bx(0)=\bx_{0}, \\
&\dot{\bw}(t)=\frac{b_{0}}{\eps}J\bw(t)+\frac{b(\eps\bx(t))-b_{0}}{\eps}J\bw(t)+\eps E(\bx(t)), \quad \bw(0)=\eps\bv_{0}.
\end{aligned}
\end{equation}
Let $\by=(\bx^{\intercal},\bw^{\intercal})^{\intercal}$ with $\bx=(x_{1},x_{2})$ and $\bw=(w_{1},w_{2})$. Then system \eqref{2d-theo} can be written in the form
\begin{equation}\label{equ-5-7-2}
\dot{\bm{y}}=\frac{1}{\eps}A\bm{y}+F(\by), \quad \by\in\mathbb{C}^{d},
\end{equation}
where $A=\begin{pmatrix} \bm{0}_{2\times 2} & \bm{I}_{2\times 2} \\ \bm{0}_{2\times 2} & b_{0}J \end{pmatrix}$ is diagonalizable with  a zero eigenvalue of multiplicity $2$ and a pair of purely imaginary eigenvalues $\lambda=\pm \ii$, together with $F(\by)=\begin{pmatrix} 0 \\ \frac{b(\eps\bx(t))-b_{0}}{\eps}J\bw(t)+\eps E(\bx(t)) \end{pmatrix}$, and the dimension is $d=4$.

To reformulate the system \eqref{equ-5-7-2}, we first elevate time $t$ to an independent variable and define the augmented state $\bz=(\by^{\intercal},t)^{\intercal}$. This leads to the equivalent form
\begin{equation}\label{LLES}
\dot{\bz}=\frac{1}{\eps}A_{1}\bz+\mf(\bz), \quad \bz(0)=\bz_{0}, \quad \bz\in \mathbb{C}^{d+1},
\end{equation}
and for the CPD system \eqref{2d-theo} we have
\begin{equation}\label{equ-20-1}
A_{1}=(A_{1})_{ij}=\begin{pmatrix}
                      A & \mathbf{0}_{d\times 1} \\
                      \mathbf{0}_{d\times 1} & 0
                     \end{pmatrix}, \quad 
\mf(\bz)=\begin{pmatrix}
           F(\by) \\
           1
         \end{pmatrix}, \quad
\bz=\begin{pmatrix}
      \by_{0} \\
      0
    \end{pmatrix}.
\end{equation}

\subsection{Governing equations}
In this section, the nonlinear terms in \eqref{LLES} are expanded via a Taylor series to introduce new variables, whose governing equations are then derived.

By Definition \ref{def-2}, the set $P_{\bz}^{[k,\hat{\bz}]}$ constitutes a basis for the space of polynomials in the variables $\bz_{1},\dots,\bz_{d+1}$ of total degree at most $k$, denoted by $\mathbb{P}^{k}[\bz]$. We now consider the $k$-th order Taylor expansion of $\mf$ centered at a reference point $\hat{\bz}$:
\begin{equation}\label{equ-21-1}
\mf(\bz)=\sum\limits_{j=0}^{k}\sum\limits_{\bar{\xi}\in\tilde{I}_{d+1}^{[[j]]}}\frac{1}{\mu(\bar{\xi})}\frac{\partial^{\bar{\xi}}\mf(\hat{\bz})}{\partial\bz^{\bar{\xi}}}
(\bz-\hbz)^{\bar{\xi}}+\bar{\br}^{k+1}(\bz;\hbz),
\end{equation}
where $\mu(\xi)=\prod\limits_{q=1}^{d+1}\abs{\xi_{\{q\}}}$, and the set $\xi_{\{q\}}=\{j:\xi_{j}=q\}$ consists of those components of the multi-index $\xi$ that take the value $q$. We denote by $\bar{\br}^{k+1}(\bz;\hbz)$ denotes the $k$-th order Taylor remainder, for which \eqref{equ-20-1} yields
\begin{equation}\label{equ-21-2}
\bar{\br}^{k+1}(\bz;\hbz)=\begin{pmatrix}
                            \br^{k+1}(\by;\hat{\by}) \\
                            0
                          \end{pmatrix}
\end{equation}
with
\begin{equation}\label{equ-21-3}
\begin{aligned}
\br^{k+1}(\by;\hat{\by})&=F(\by)-\sum\limits_{j=0}^{k}\sum\limits_{\bar{\xi}\in\tilde{I}_{d}^{[[j]]}}\frac{1}{\mu(\bar{\xi})}
\frac{\partial^{\abs{\bar{\xi}}}F(\hat{\by})}{\partial y_{1}^{\abs{\bar{\xi}_{\{1\}}}}\cdots \partial y_{d}^{\abs{\bar{\xi}_{\{d\}}}}}(\by-\hby)^{\bar{\xi}} \\
&=\mathcal{O}((\by-\hby)^{k+1})
\end{aligned}
\end{equation}
denoting the $(k+1)$-th order Taylor remainder $F(\by)$ with respect to $\hby$.

The variables in $P_{\bz}^{[k,\hbz]}$ are yet to be determined. To this end, we proceed by deriving their governing equations. Given a reference point $\hat{\bz}$ and a multi-index $\xi\in\mI_{d+1}^{[k]}$ with $\abs{\xi}=j$, we apply \eqref{LLES} and \eqref{equ-20-2} to obtain
\begin{equation}\label{equ-21-4}
\begin{aligned}
\frac{d}{dt}(\bz-\hbz)^{\xi}=&\sum\limits_{l=1}^{j}\frac{(z_{\xi_{1}}-\hat{z}_{\xi_{1}})\cdots(z_{\xi_{j}}-\hat{z}_{\xi_{j}})}{(z_{\xi_{l}}-\hat{z}_{\xi_{l}})}
\frac{d(z_{\xi_{l}}-\hat{z}_{\xi_{l}})}{dt} \\
=&\sum\limits_{l=1}^{j}(\bz-\hbz)^{\chi(\xi;l)}\left(\frac{1}{\eps}\sum\limits_{m=1}^{d+1}(A_{1})_{\xi_{l}m}z_{m}+f_{\xi_{l}}(\bz)\right) \\
=&\frac{1}{\eps}\sum\limits_{l=1}^{j}\sum\limits_{m=1}^{d+1}(A_{1})_{\xi_{l}m}(\bz-\hbz)^{\chi(\xi;l)}(z_{m}-\hat{z}_{m}) \\
&+\frac{1}{\eps}\sum\limits_{l=1}^{j}\sum\limits_{m=1}^{d+1}
(A_{1})_{\xi_{l}m}\hat{z}_{m}(\bz-\hbz)^{\chi(\xi;l)} \\
&+\sum\limits_{l=1}^{j}\sum\limits_{m=0}^{k-j+1}\sum\limits_{\bar{\zeta}\in\tilde{\mI}_{d+1}^{[[m]]}}\frac{1}{\mu(\bar{\zeta})}\frac{\partial^{\bar{\zeta}}(\hbz)}
{\partial\bz^{\bar{\zeta}}}(\bz-\hbz)^{\chi(\xi;l)}(\bz-\hbz)^{\bar{\zeta}} \\
&+\sum\limits_{l=1}^{j}(\bz-\hbz)^{\chi(\xi;l)}\bar{r}_{\xi_{l}}^{k-j+2}(\bz,\hbz).
\end{aligned}
\end{equation}
Here, a $(k-j+1)$-th order Taylor expansion of $f_{\xi_l}$ is used, where $f_{\xi_l}$ and $\bar{\xi_l}^{i}$ denote the $\xi_l$-th components of $\mf$ and $\bar{\br}^{i}$, respectively, and $\chi(\xi;l)\in \mI_{d+1}^{[[j-1]]}$ is the multi-index obtained by deleting the $l$-th component from $\xi\in \mI_{d+1}^{[[j-1]]}$, so that $\chi(\xi;l)=(\xi_{1},\dots,\xi_{l-1},\xi_{l+1},\dots,\xi_{j})$.

Up to the nonlinear remainder terms $\bar{r}_{\xi_l}^{k-j+2}(\bz;\hbz)$, every term in \eqref{equ-21-4} is a linear combination of polynomials from $P_{\bz}^{[k,\hbz]}$. With the ordering \eqref{lex-order} and after collecting the coefficients of like polynomials, equation \eqref{equ-21-4} reduces, remove the remainder, to an inner product of coefficient vectors with $\bz^{[k,\hbz]}$. To derive the coefficient matrices $A_{1}^{[k]}(\hbz)$ and $A_{0}^{[k]}(\hbz)$, we differentiate each component of $\bz^{[k,\hbz]}$ via \eqref{equ-21-4}, collect the resulting coefficient vectors, and separate terms of $\mathcal{O}(1/\eps)$ from those of $\mathcal{O}(1)$. This yields the governing equation for $\bz^{[k,\hbz]}$:

\begin{equation}\label{local-exten}
\frac{d\bz^{[k,\hbz]}}{dt}=\frac{1}{\eps}A_{1}^{[k]}(\hbz)\bz^{[k,\hbz]}+A_{0}^{[k]}(\hbz)\bz^{[k,\hbz]}+\bR^{[k]}(\bz^{[k,\hbz]};\hbz),
\end{equation}
which is referred as local linear extension system of \eqref{2d-theo}.
Suppose the $i$-th row of \eqref{local-exten} be generated by the multi-index $\xi$ with $\abs{\xi}=j$. With the $(d+1)$-th row in \eqref{equ-21-2} vanishing, \eqref{equ-21-4} yields the $i$-th component of $\bR^{[k]}(\bz^{[k,\hbz]};\hbz)$ as
\begin{equation}\label{remainder}
R_{i}^{[k]}(\bz^{k,\hbz};\hbz)=\sum\limits_{l=1}^{j}I_{\{\xi_{l}\neq d+1\}}(\bz-\hbz)^{\chi(\xi;l)}r_{\xi_l}^{k-j+2}(\by;\hby), 
\end{equation}
with the indicator function defined by
$
I_{\{\xi_{l}\neq d+1\}}=\begin{cases}
                          1, & \mbox{if} \ \xi_{l}\neq d+1, \\
                          0, & \mbox{if} \ \xi_{l}=d+1.
                        \end{cases}
$

As closed-form expressions for $A_{1}^{[k]}$ and $A_{0}^{[k]}$ are unavailable in the compact formulation, we adopt an algorithmic element-wise construction, detailed in Algorithm 1 of reference \cite{QDF}. The procedure first enumerates all multi-indices in $P_{\bz}^{[k,\hbz]}$ as row indices. For the equation induced by a given multi-index, the column indices are determined by the positions of the associated polynomials, thereby defining the local linear extension system.

\subsection{Exponential integrator}
To conclude this section, we present the construction of the exponential integrator. We introduce a uniform discretization of the time interval $[0,T]$ with step size $h=T/N$, and set $t_n = nh$ for $n=0,\dots,N$. At each exact solution point $\bz(t_{n})$, the associated local linear extension system on $[t_{n},t_{n+1}]$ is given by:
\begin{equation}\label{equ-5-15-1}
\begin{aligned}
&\frac{d\bz^{[k,\bz(t_{n})]}}{dt}=\frac{1}{\eps}A_{1}^{[k]}(\bz(t_{n}))\bz^{[k,\bz(t_{n})]}+A_{0}^{[k]}(\bz(t_{n}))\bz^{[k,\bz(t_{n})]}+\bR^{[k]}(\bz^{[k,\bz(t_{n})]}), \\
&\bz^{[k,\bz(t_{n})]}(t_{n})=e_{1}, \ t_{n}\leq t\leq t_{n+1}, 
\end{aligned}
\end{equation}
where $e_{j}$ denotes the $j$-th canonical basis vector, and we abbreviate $\bR(\bz^{[k,\bz(t_{n})]})=\bR^{[k]}(\bz^{[k,\bz(t_{n})]},\bz(t_{n}))$. Taking $\hbz=\bz(t_{n})$ in Definition \ref{def-2} gives $$\Pi_{2}^{d+2}\bz^{[k,\bz(t_{n})]}(t)=\bz(t)-\bz(t_{n}).$$ Consequently, we obtain
\begin{equation*}
\bz(t_{n+1})=\Pi_{2}^{d+2}\bz^{[k,\bz(t_{n})]}(t_{n+1})+\bz(t_{n}), \ n=0, \cdots, N-1,
\end{equation*}
from which the exact solution of \eqref{2d-theo} is recovered. Omitting $\bR^{[k]}$ produces the linear truncated system
\begin{equation}\label{trunc}
\begin{aligned}
&\frac{d\tilde{\bz}^{[k,\bz(t_{n})]}}{dt}=\frac{1}{\eps}A_{1}^{[k]}(\bz(t_{n}))\tilde{\bz}^{[k,\bz(t_{n})]}+A_{0}^{[k]}(\bz(t_{n}))\tilde{\bz}^{[k,\bz(t_{n})]}, \\
&\tilde{\bz}^{[k,\bz(t_{n})]}(t_{n})=e_{1}, \ t_{n}\leq t_{n+1},
\end{aligned}
\end{equation}
with projection $\tilde{\bz}(t_{n+1})=\Pi_{2}^{d+2}\tilde{\bz}^{[k,\bz(t_{n})]}(t_{n+1})+\bz(t_{n})$.
Let $\bZ_{n}$ denote the numerical solution at $t_{n}$. Substituting $\bZ_{n}$ into the system \eqref{trunc} yields
\begin{equation}\label{LLES-1}
\begin{aligned}
&\frac{d\bZ^{[k,\bZ_{n}]}}{dt}=\frac{1}{\eps}A_{1}^{[k]}(\bZ_{n})\bZ^{[k,\bZ_{n}]}+A_{0}^{[k]}(\bZ_{n})\bZ^{[k,\bZ_{n}]}, \quad t_{n}\leq t\leq t_{n+1}, \\
&\bZ^{[k,\bZ_{n}]}(t_{n})=e_{1},
\end{aligned}
\end{equation}
and whose exact solution provides $\bZ^{[k,\bZ_{n}]}(t)$ as the next step
\begin{equation}\label{scheme-1}
\bZ^{[k,\bZ_{n}]}(t_{n+1})=\fe^{(\frac{1}{\eps}A_{1}^{[k]}(\bZ_{n})+A_{0}^{[k]}(\bZ_{n}))h}e_{1}.
\end{equation}
The projection 
\begin{equation}\label{scheme-2}
\bZ_{n+1}=\Pi_{2}^{d+2}\bZ^{[k,\bZ_{n}]}(t_{n+1})+\bZ_{n}
\end{equation}
recovers the numerical solution of \eqref{LLES} at $t_{n+1}$. Since \eqref{LLES} is equivalent to \eqref{2d-theo}, $\by(t_{n+1})$ is extracted as
\begin{equation}\label{scheme-3}
\by_{n+1}=(\bm{0}_{d\times 1},\ \bm{I}_{d\times d})\bZ_{n+1}.
\end{equation}

In summary, Figure \ref{flow} illustrates the detailed flowchart of the local linear extension exponential integrator applied to the CPD system \eqref{2d}.

\usetikzlibrary{shapes.geometric, arrows}
\usetikzlibrary{calc,positioning}
\begin{figure}[t!]
\centering
\resizebox{\textwidth}{!}{
\begin{tikzpicture}[node distance=2.1cm]

\tikzstyle{startstop} = [rectangle, rounded corners=6pt, minimum width=2cm, minimum height=1cm, text centered, draw=black, fill=orange!20]
\tikzstyle{process}   = [rectangle, rounded corners=6pt, minimum width=3cm, minimum height=1.1cm, text centered, draw=black, fill=blue!10]
\tikzstyle{arrow}     = [->, >=stealth]
\tikzstyle{dashedarrow}= [->, >=stealth, dashed]

\node (start) [startstop] {Start};
\node (in1) [process, below of=start, node distance=22mm]
  {$\dot{\bx}=\bv,\dot{\bv}=\frac{b(\bx)}{\eps}J\bv+E(\bx)$};
\node (pro1) [process, below of=in1]
  {$\dot{\by}=\frac{1}{\eps}A\by+F(\by)$};
\node (pro2) [process, below of=pro1]
  {$\dot{\bz}=\frac{1}{\eps}A_{1}\bz+f(\bz)$};
\node (pro3) [process, below of=pro2]
  {$\frac{d\bz^{[k,\hbz]}}{dt}=\frac{1}{\eps}A_{1}^{[k]}(\hbz)\bz^{[k,\hbz]}+A_{0}^{[k]}(\hbz)\bz^{[k,\hbz]}+\bR^{[k]}(\bz^{[k,\hbz]};\hbz)$};
\node (pro4) [process, right of=pro3, xshift=6.6cm]  
  {$\bZ^{[k,\bZ_{n}]}(t_{n+1}), n=0,1,\cdots,N-1$};
\node (pro5) [process, right of=pro2, xshift=6.6cm]  
  {$\bZ_{n+1}, n=0,1,\cdots,N-1$};
\node (pro6) [process, right of=pro1, xshift=6.6cm]   
  {$\by_{n+1}, n=0,1,\cdots,N-1$};
\node (pro7) [process, right of=in1, xshift=6.6cm]   
  {$\bx_{n+1},\bv_{n+1}, n=0,1,\cdots,N-1$};
\node (end) [startstop, right of=start, xshift=6.6cm] 
  {End};

\draw [arrow] (start) -- (in1);
\draw [arrow] (in1) -- node [left, align=left]{$\bm{w}=\eps\bv$}
                      node [right]{$\by=(\bx^{\intercal},\bw^{\intercal})^{\intercal}$} (pro1);
\draw [arrow] (pro1) -- node [right, align=left]{$\bz=(\by^{\intercal},t)^{\intercal}$}
                       (pro2);
\draw [arrow] (pro2) -- node [left, align=left]{taylor expansion \\ $f(\bz)$ about $\hbz$}
                        node [right, align=left]{differentiate each \\ component of $\bz^{[k,\hbz]}$}
                       (pro3);
\draw [->, >=stealth] (pro3) -- node [above]{solved by \eqref{scheme-1}} (pro4);
\draw [arrow] (pro4) -- node [left, align=left]{project} (pro5);
\draw [arrow] (pro5) -- node [left, align=left]{extract} (pro6);
\draw [arrow] (pro6) -- node [left, align=left]{$\bv_{n+1}=\frac{1}{\eps}\bw_{n+1}$} (pro7);
\draw [arrow] (pro7) -- (end);
\draw [dashedarrow] (pro7) -- node [above]{numerical solution} (in1);
\draw [dashedarrow] (pro6) -- node [above]{numerical solution} (pro1);
\draw [dashedarrow] (pro5) -- node [above]{numerical solution} (pro2);

\draw [-, dashed] ($(in1.west)+(-2.2,1)$)                
               -| ($(pro3.east)+(0.1,-0.7)$)           
               -| ($(in1.west)+(-2.2,1)$);
\node at ($(in1.west)+(-1.2,1)$) [above] {formulations};

\draw [-, dashed] ($(pro4.west)+(-0.1,-0.7)$)          
               -| ($(pro7.east)+(0.2,1)$)           
               -| ($(pro4.west)+(-0.1,-0.7)$);           
\node at ($(pro7.east)+(-1.3,1)$) [above right] {solutions}; 
\end{tikzpicture}
}
\caption{Flowchart of the local linear exponential integration process.}
\label{flow}
\end{figure}

\section{Convergence results}\label{sec-3}

In this section, we prove the convergence of the proposed integrators. The linear part of \eqref{LLES-1}, dominated by the matrix $A_{1}^{[k]}(\hbz)$, largely determines the numerical behavior of these schemes. We therefore begin with a spectral analysis of $A_{1}^{[k]}$ derived from \eqref{scheme-1}.

\begin{lemma}(\cite{QDF})\label{lem-1}
Let $A_{1}$ defined in \eqref{equ-20-1} and the local linear extension operation applied to system \eqref{local-exten} yields $A_{1}^{[k]}(\hbz)$. Then $A_{1}^{[k]}(\hbz)$ is diagonalizable with all eigenvalues having zero real parts. It follows that $\norm{\fe^{\frac{1}{\eps}A_{1}^{[k]}(\hbz)t}}\leq C$ holds uniformly for any $\eps$, $t$ and $\hbz$.
\end{lemma}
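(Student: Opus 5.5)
The plan is to find the structure of $A_{1}^{[k]}(\hbz)$ from \eqref{equ-21-4}. The $\mathcal{O}(1/\eps)$ part of the derivative of $(\bz-\hbz)^{\xi}$, with $\abs{\xi}=j$, consists of two pieces. The first piece, $\sum_{l}\sum_{m}(A_{1})_{\xi_l m}(\bz-\hbz)^{\chi(\xi;l)}(z_m-\hat z_m)$, maps degree-$j$ monomials to degree-$j$ monomials. The second piece, $\sum_{l}\sum_m (A_1)_{\xi_l m}\hat z_m(\bz-\hbz)^{\chi(\xi;l)}$, maps degree $j$ to degree $j-1$. With the graded ordering \eqref{lex-order}, $A_{1}^{[k]}(\hbz)$ is therefore block lower-triangular: its diagonal blocks are $B_j$, $j=0,\dots,k$, and its subdiagonal blocks are $C_j(\hbz)$ from degree $j-1$ to degree $j$. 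The first key step is to identify $B_j$ as the matrix of the derivation $p\mapsto \nabla p\cdot (A_1 \bu)$ acting on homogeneous polynomials of degree $j$ in $\bu=\bz-\hbz$. Equivalently, $B_j$ is the induced action of $A_1$ on the symmetric power $\mathrm{Sym}^j(\mathbb{C}^{d+1})$, restricted to the basis $\tilde{\mI}^{[[j]]}_{d+1}$.

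Diagonalizability of the blocks follows from this identification. Since $A_1$ is diagonalizable, $A_1=V\Lambda V^{-1}$ with eigenvalues in $\{0,0,0,\ii b_0,-\ii b_0\}$: $A$ has $0$ twice and $\pm\ii b_0$, and the time row adds one more $0$. Under the linear change of variables $\bu=V\bm{s}$, the monomials $\bm{s}^{\bar\xi}$ are eigenvectors of the derivation, with eigenvalues $\lambda_{\xi_1}+\cdots+\lambda_{\xi_j}$. These eigenvalues lie in $\ii b_0\mathbb{Z}$ and so are purely imaginary. Hence each $B_j$ is diagonalizable, with conditioning controlled by $V$ and $j\le k$ only, and independent of $\hbz$.

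The main obstacle is the coupling blocks $C_j(\hbz)$. A block triangular matrix with diagonalizable diagonal blocks need not be diagonalizable (Jordan blocks can appear), and the coefficients grow with $\hbz$. The fix is to observe that the \emph{whole} $\mathcal{O}(1/\eps)$ part is the derivation $p\mapsto\nabla p\cdot A_1\bz$, rewritten in the shifted basis. Writing $A_1\bz=A_1\bu+A_1\hbz$, this is the derivation induced by an affine vector field. Working in the homogeneous coordinates $(1,\bu)\in\mathbb{C}^{d+2}$, the field corresponds to the linear matrix
\begin{equation*}
\widehat A=\begin{pmatrix}0&0\\ A_1\hbz & A_1\end{pmatrix},
\end{equation*}
and $A_1^{[k]}(\hbz)$ is the induced action of $\widehat A$ on polynomials of degree $\le k$. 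The question therefore reduces to whether $\widehat A$ is diagonalizable. This holds exactly when $A_1\hbz\in\mathrm{Range}(A_1)$, which is automatic because $A_1\hbz$ is an image. In fact, if $\bm{p}$ denotes the projection of $\hbz$ onto the range part of $A_1$, then conjugation by $\begin{pmatrix}1&0\\ \bm{p}&I\end{pmatrix}$ removes the off-diagonal block. Concretely, this is the translation $\bu\mapsto\bu+\bm{p}$ in polynomial space, under which $\widehat A$ becomes $\mathrm{diag}(0,A_1)$. The induced conjugation on polynomials of degree $\le k$ then diagonalizes $A_1^{[k]}(\hbz)$, with all eigenvalues purely imaginary sums as above.

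For the uniform bound, write $\fe^{\frac{t}{\eps}A_1^{[k]}(\hbz)}=S(\hbz)\,\fe^{\frac{t}{\eps}D}\,S(\hbz)^{-1}$, where $D$ is purely imaginary diagonal, so $\norm{\fe^{\frac t\eps D}}=1$. The translation matrices $S(\hbz)$ have entries polynomial in $\bm{p}$, so $\norm{S}\norm{S^{-1}}$ is bounded uniformly in $\eps$, $t$, and $\hbz$ ranging over a bounded set. This covers our case, since $\bz(t_n)$ stays bounded on $[0,T]$, and I would state the uniformity with respect to bounded $\hbz$. Checking that the translation really yields exact equality, rather than only similarity up to lower-order terms, is the step I would verify most carefully, by direct comparison with \eqref{equ-21-4}.
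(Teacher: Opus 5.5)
Your argument is correct and is essentially the one the paper imports from \cite{QDF}. Lemma~\ref{lem-2} is precisely your translation similarity $A_{1}^{[k]}(\bm{0})=(S^{[k]}(\hbz))^{-1}A_{1}^{[k]}(\hbz)S^{[k]}(\hbz)$, with $S^{[k]}$ block lower triangular and polynomial in $\hbz$, and $A_{1}^{[k]}(\bm{0})=\mathrm{diag}\{0,A_{1},A_{1}^{[[2]]},\dots,A_{1}^{[[k]]}\}$ consists of the symmetric-power blocks you diagonalize. Working in homogeneous coordinates and translating only by the range component $\bm{p}$ are cosmetic differences. The similarity you wanted to double-check is exact: $A_{1}^{[k]}(\hbz)$ and $A_{1}^{[k]}(\hbz-\bm{p})=A_{1}^{[k]}(\bm{0})$ represent the same derivation $q\mapsto\nabla q\cdot A_{1}\bz$ on $\mathbb{P}^{k}[\bz]$ in two shifted monomial bases.

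Your restriction to bounded $\hbz$ is necessary, not just cautious. The leading (degree $\leq 1$) block of the block lower-triangular $A_{1}^{[k]}(\hbz)$ gives $\Pi_{2}^{d+2}\fe^{\frac{t}{\eps}A_{1}^{[k]}(\hbz)}e_{1}=(\fe^{\frac{t}{\eps}A_{1}}-I)\hbz$, whose $\bw$-part equals $-2\hat{\bw}$ when $b_{0}t/\eps=\pi$. Hence the stated uniformity over all $\hbz$ fails, whereas uniformity over bounded $\hbz$ is all that Theorem~\ref{2d-conv} requires.
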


For the algebraic structure of $A_{1}^{[k]}$, we have the following properties.
\begin{lemma}(\cite{QDF})\label{lem-2}
Under the ordering \eqref{lex-order}, the matrix $A_{1}^{[k]}(\hbz)$ has the following properties:
\\
(1) $A_{1}^{[k]}(\hbz)$ is similar for all $\hbz$.
\\
(2) There exists an invertible lower triangular matrix $S^{[k]}(\hbz)$ such that
  \begin{equation}\label{equ-22-2}
   A_{1}^{[k]}(\mathbf{0}_{(d+1)\times 1})=(S^{[k]}(\hbz))^{-1}A_{1}^{[k]}(\hbz)S^{[k]}(\hbz).
   \end{equation}
\\
(3) Partitioning $S^{[k]}(\hbz)$ and its inverse $(S^{[k]}(\hbz))^{-1}$ into block form gives
  \begin{equation*}
  \begin{aligned}
   & S^{[k]}(\hbz)=\begin{pmatrix}
                    S_{0,0}(\hbz) &  &  &  \\
                    S_{1,0}(\hbz) & S_{1,1}(\hbz) &  &   \\
                    \vdots  &   &  \ddots  &  \\
                    S_{k,0}(\hbz)  & S_{k,1}(\hbz)  & \ldots & S_{k,k}(\hbz) 
                  \end{pmatrix},
    \\
    &(S^{[k]}(\hbz))^{-1}=\begin{pmatrix}
                    \bar{S}_{0,0}(\hbz) &  &  &  \\
                    \bar{S}_{1,0}(\hbz) & \bar{S}_{1,1}(\hbz) &  &   \\
                    \vdots  &   &  \ddots  &  \\
                    \bar{S}_{k,0}(\hbz)  & \bar{S}_{k,1}(\hbz)  & \ldots & \bar{S}_{k,k}(\hbz) 
                  \end{pmatrix},
    \end{aligned}
  \end{equation*}
then $S_{j,i},\bar{S}_{j,i}\in (\mathbb{P}^{j-i}[\hbz]/\mathbb{P}^{j-i-1}[\hbz])^{D^{[[]j]}\times D^{[[i]]}}$, i.e., their entries are homogeneous polynomials of degree $j-i$ in $\hbz$. In particular, $S_{j,j}(\hbz)=\bar{S}_{j,j}(\hbz)=I_{D^{[[]j]}}$, $S_{1,0}=-\hbz$, $\bar{S}_{1,0}=\hbz$.
\end{lemma}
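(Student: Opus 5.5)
The plan is to write down $S^{[k]}(\hbz)$ explicitly as the matrix of a change of polynomial basis, and then read off all three properties from that description.

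\textbf{Step 1: the matrix $A_1^{[k]}(\hbz)$ represents a fixed operator.} Consider the linear derivation $\mathcal{L}p(\bz):=\nabla p(\bz)\cdot A_1\bz$ acting on $\mathbb{P}^{k}[\bz]$. It preserves total degree, so it maps $\mathbb{P}^{k}[\bz]$ into itself.

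By \eqref{equ-21-4}, the $\mathcal{O}(1/\eps)$ part of the derivative of $(\bz-\hbz)^{\xi}$ is exactly $\mathcal{L}(\bz-\hbz)^{\xi}$. This contains both the term $(z_m-\hat z_m)$ and the term $\hat z_m$, which together reassemble $\sum_m(A_1)_{\xi_l m}z_m$.

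Hence $A_1^{[k]}(\hbz)$ is the matrix of the single, $\hbz$-independent operator $\mathcal{L}$. It is expressed in the basis $P_{\bz}^{[k,\hbz]}$, with the row convention used for $\frac{d}{dt}\bz^{[k,\hbz]}=\frac1\eps A_1^{[k]}\bz^{[k,\hbz]}$. Two bases of the same space give similar matrices, which proves (1).

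\textbf{Step 2: the explicit similarity (2).} Expand each shifted monomial in the unshifted ones:
\[(\bz-\hbz)^{\bar\xi}=\sum_{\bar\zeta\subseteq\bar\xi}(-\hbz)^{\bar\xi\setminus\bar\zeta}\,\bz^{\bar\zeta}.\]
This gives $\bz^{[k,\hbz]}=T(\hbz)\,\bz^{[k,\mathbf 0]}$ with $T$ block lower triangular in the degree grading, since shifting only produces lower-degree terms.

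From $\frac{d}{dt}\bz^{[k,\hbz]}=\frac1\eps A_1^{[k]}(\hbz)\bz^{[k,\hbz]}$ and the same identity at $\mathbf 0$, linear independence of the basis yields
\[T\,A_1^{[k]}(\mathbf 0)=A_1^{[k]}(\hbz)\,T.\]
Therefore $S^{[k]}(\hbz):=T(\hbz)$ satisfies \eqref{equ-22-2}.

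The inverse is the reverse shift $\bz^{\bar\zeta}=((\bz-\hbz)+\hbz)^{\bar\zeta}$. This is the same construction with $\hbz$ replaced by $-\hbz$, so $(S^{[k]}(\hbz))^{-1}=S^{[k]}(-\hbz)$.

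\textbf{Step 3: the block structure (3).} The coefficient of the degree-$i$ monomial $\bz^{\bar\zeta}$ in $(\bz-\hbz)^{\bar\xi}$, with $|\bar\xi|=j$, is a product of $j-i$ components of $-\hbz$, times a combinatorial multiplicity. So the block $S_{j,i}$ is homogeneous of degree $j-i$ in $\hbz$. For $j=i$ the only contribution is $\bar\zeta=\bar\xi$, giving $S_{j,j}=I$.

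For $j=1$ we have $z_m-\hat z_m=z_m\cdot1-\hat z_m$, so $S_{1,0}=-\hbz$. Taking the inverse $S^{[k]}(-\hbz)$ then gives $\bar S_{1,0}=\hbz$.

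\textbf{Main obstacle.} The delicate part is bookkeeping with the sorted multi-indices $\tilde{\mI}^{[[j]]}_{d+1}$. One must check that the coefficients in the expansion of $(\bz-\hbz)^{\bar\xi}$ over representatives really are polynomials in $\hbz$ independent of $\bz$. One must also check that the row/column convention of \eqref{local-exten} matches the transpose relation used in Step 2.

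To handle this, I would first verify the intertwining identity componentwise on a single degree-$j$ row, using the Leibniz rule. I would then extend it to all rows by the grading.
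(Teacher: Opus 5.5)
The paper does not prove this lemma itself. It is quoted from \cite{QDF} and the proof is explicitly omitted, so there is no in-paper argument to match. Your change-of-basis argument is correct and stands on its own as a complete proof.

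The crux is your Step 1 observation: \eqref{equ-21-4} puts both $\frac{1}{\eps}$-terms into $A_1^{[k]}(\hbz)$, the one with $(z_m-\hat z_m)$ and the one with $\hat z_m$. Together they equal $\frac{1}{\eps}\mathcal{L}(\bz-\hbz)^{\xi}$. Hence each row of $A_1^{[k]}(\hbz)$ is the unique coefficient vector of $\mathcal{L}(\bz-\hbz)^{\bar\xi}$ in the basis $P_{\bz}^{[k,\hbz]}$. If only the first term were kept, $A_1^{[k]}$ would not depend on $\hbz$, and the stated $S$ with $S_{1,0}=-\hbz$ would fail.

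Read Step 2 as the polynomial identity $\mathcal{L}\bz^{[k,\hbz]}=A_1^{[k]}(\hbz)\bz^{[k,\hbz]}$, not as an identity along trajectories. Then, since $T(\hbz)$ is constant in $\bz$, you get $T A_1^{[k]}(\mathbf{0})=A_1^{[k]}(\hbz)T$ directly. That already settles the row/column convention you list as an obstacle. The other obstacle, that the expansion coefficients depend only on $\hbz$, is immediate from the binomial expansion.

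Two small points tidy up the statement:
\begin{itemize}
\item Because the diagonal blocks of $T$ are identities, $T$ is unit lower triangular entrywise, not just blockwise, which is what (2) asserts.
\item Your identity $(S^{[k]}(\hbz))^{-1}=S^{[k]}(-\hbz)$ gives $\bar S_{j,i}(\hbz)=(-1)^{j-i}S_{j,i}(\hbz)$. So every claim about $\bar S$, including $\bar S_{1,0}=\hbz$, follows at once.
\end{itemize}

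Your formulation also gives something extra. $\mathcal{L}$ maps homogeneous polynomials of degree $j$ to homogeneous polynomials of degree $j$. So it immediately yields $A_1^{[k]}(\mathbf{0})=\mathrm{diag}\{0,A_1,A_1^{[[2]]},\ldots,A_1^{[[k]]}\}$, which the paper imports from Lemma~3.3 of \cite{QDF} in the proof of Theorem~\ref{2d-conv}.
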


The proofs of Lemmas \ref{lem-1} and \ref{lem-2} are omitted here and the reader is referred to \cite{QDF}.

As a preparatory step for the convergence analysis, we first establish the solution properties of the CPD \eqref{2d-theo}. We assume that $E(\bx)$ and $b(\bx)$ are globally Lipschitz continuous up to order $k$, specificly, for all $\bx_{1}, \bx_{2}\in\mathbb{R}^2$,
\begin{align}\label{equ-5-7-1}
\norm{E(\bx_{1})-E(\bx_{2})}\leq C_{E}\norm{\bx_{1}-\bx_{2}}, \quad \norm{b(\bx_{1})-b(\bx_{2})}\leq C_{b}\norm{\bx_{1}-\bx_{2}}, 
\end{align}
with constants $C_{E}, C_{b}>0$ independent of $\eps$. Here and after, $\norm{\cdot}$ denotes the standard Euclidean norm for vectors in $\mathbb{C}^{d}$ and the Frobenius norm for matrices in $\mathbb{C}^{d\times d}$.

\begin{lemma}\label{lem-4}
Assume \eqref{equ-5-7-1} holds and the initial data of the CPD system \eqref{2d} is uniformly bounded. Then the solution of \eqref{2d-theo} satisfies
\begin{equation}\label{solu-bound-1}
 \norm{\bx(t)-\bx_{0}}\leq C\eps, \quad \norm{\bw(t)}\leq C\eps, \quad 0\leq t\leq T,
\end{equation}
for some constants $C>0$ independent of $\eps$. Expressing \eqref{2d-theo} as \eqref{equ-5-7-2} additionally yields $$\norm{F(\by)}\leq C\eps,\ \ \   \norm{\dot{\by}}\leq C.$$
Furthermore, concerning the higher-order derivatives of $F(\by)$ in \eqref{equ-21-3}, we  have the following estimates:
\begin{equation}\label{equ-5-19-1}
\left\Vert\sum\limits_{\bar{\xi}\in\tilde{I}_{d}^{[[k]]}}\frac{1}{\mu(\bar{\xi})}
\frac{\partial^{\abs{\bar{\xi}}}F(\hat{\by})}{\partial y_{1}^{\abs{\bar{\xi}_{\{1\}}}}\cdots \partial y_{d}^{\abs{\bar{\xi}_{\{d\}}}}}\right\Vert \leq \begin{cases}
 C, \  k=2, \\
 C\eps, \  k\neq 2,
\end{cases}
\end{equation}
for $k\in N^{+}$.

\end{lemma}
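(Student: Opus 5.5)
The bounds on $\bw$ come from energy conservation, the bound on $\bx-\bx_0$ from integrating $\dot\bx=\bw/\eps$ against the fast rotation, and the bounds on $F$ and its derivatives from the chain rule applied to $b(\eps\bx)$.

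\textbf{Velocity bound.} In the original variables \eqref{2d}, the magnetic force $\frac{b}{\eps}J\bv$ does no work, since $\bv^{\intercal}J\bv=0$. Hence $H(\bx,\bv)=\frac12\abs{\bv}^2+U(\bx)$ is conserved. Combined with the Lipschitz bound \eqref{equ-5-7-1} on $E=-\nabla U$ (so that $U$ grows at most quadratically) and a Gronwall argument on $\norm{\bx(t)}\le\norm{\bx_0}+\int_0^t\norm{\bv}$, this gives $\norm{\bv(t)}\le C$ and $\norm{\bx(t)}\le C$ on $[0,T]$, uniformly in $\eps$. It follows immediately that $\norm{\bw}=\eps\norm{\bv}\le C\eps$.

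\textbf{Position bound.} This is the genuine difficulty: a naive integration only gives $\norm{\bx(t)-\bx_0}\le Ct$, not $C\eps$. I will exploit the fast rotation. Since $b\ge c_0>0$, the matrix $bJ$ is invertible with $\norm{(bJ)^{-1}}\le 1/c_0$. From the $\bv$-equation,
$$\bv=\eps\,(b(\eps\bx)J)^{-1}\bigl(\dot{\bv}-E(\bx)\bigr).$$
Integrating $\dot{\bx}=\bv$ from $0$ to $t$ and integrating by parts in the $\dot{\bv}$ term gives
$$\bx(t)-\bx_0=\eps\Bigl[(bJ)^{-1}\bv\Bigr]_0^t-\eps\int_0^t \frac{d}{ds}\bigl((b(\eps\bx)J)^{-1}\bigr)\bv\,ds-\eps\int_0^t(bJ)^{-1}E\,ds.$$
The derivative satisfies $\frac{d}{ds}b(\eps\bx)=\eps\nabla b\cdot\bv=\mathcal{O}(\eps)$. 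Together with $\norm{\bv}\le C$ and $\norm{E(\bx)}\le C$ on the bounded set where $\bx$ lives, every term on the right is $\mathcal{O}(\eps)$. In the constant-field case this is exact with no derivative term.

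\textbf{Bounds on $F$ and $\dot{\by}$.} The second component of $F$ is $\frac{b(\eps\bx)-b_0}{\eps}J\bw+\eps E$. By the Lipschitz property of $b$,
$$\frac{\abs{b(\eps\bx)-b_0}}{\eps}\le C_b\norm{\bx-\bx_0}\le C\eps.$$
Multiplying by $\norm{\bw}\le C\eps$ and adding $\eps E=\mathcal{O}(\eps)$ gives $\norm{F}\le C\eps$. Then
$$\dot{\by}=\frac1\eps A\by+F,$$
where $A\by=(\bw,\,b_0J\bw)$ is $\mathcal{O}(\eps)$, so $\norm{\dot{\by}}\le C$.

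\textbf{Derivative estimate \eqref{equ-5-19-1}.} I will differentiate $G(\bx,\bw)=\frac{b(\eps\bx)-b_0}{\eps}J\bw+\eps E(\bx)$ directly and evaluate at $\hat{\by}$ on the solution, where $\hat{\bx}-\bx_0=\mathcal{O}(\eps)$ and $\hat{\bw}=\mathcal{O}(\eps)$.

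A pure $\bx$-derivative of order $m$ gives $\eps^{m-1}(\partial^m b)J\hat{\bw}+\eps\,\partial^mE$. Since $\hat{\bw}=\mathcal{O}(\eps)$, this is $\mathcal{O}(\eps)$ for all $m\ge1$.

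A derivative with exactly one $\bw$-direction and $m-1\ge1$ $\bx$-directions gives $\eps^{m-2}(\partial^{m-1}b)J$. This is $\mathcal{O}(1)$ precisely when $m=2$ and $\mathcal{O}(\eps)$ when $m\ge3$. For $m=1$, the pure $\bw$-derivative is $\frac{b(\eps\hat{\bx})-b_0}{\eps}J=\mathcal{O}(\eps)$ by the position bound.

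Two or more $\bw$-derivatives vanish, because $G$ is linear in $\bw$. The zeroth-order term is $F(\hat{\by})=\mathcal{O}(\eps)$.

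Summing over multi-indices, with the factors $1/\mu$ bounded, yields the case split: $C$ for $k=2$ and $C\eps$ otherwise. This requires $b$ and $E$ to have bounded derivatives up to the needed order, which I read into the assumption "Lipschitz up to order $k$".
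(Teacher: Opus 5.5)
Your proof is correct. Except for the position bound, it is the paper's argument. Your energy-conservation bound on $\bv$ is the same computation as the paper's $\frac{d}{dt}\norm{\tilde{\bw}}\le\eps\norm{E(\bx)}$ for $\tilde{\bw}=\fe^{-b_0Jt/\eps}\bw$. Your bounds on $F$ and $\dot{\by}$, and the three-case analysis of the derivatives, coincide with the paper's.

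For $\bx-\bx_0$ the two routes differ. The paper freezes the gyration at $b_0$ and integrates $\frac{1}{\eps}\fe^{b_0J\theta/\eps}\tilde{\bw}$ by parts. This leaves the field-variation term $\frac{b(\eps\bx)-b_0}{\eps^2}J\tilde{\bw}$, bounded by $C_bC_1\norm{\bx-\bx_0}$, so the paper closes with a Gronwall argument on $\norm{\bx(t)-\bx_0}/\eps$. You instead invert the instantaneous Lorentz term, $\bv=\eps\bigl(b(\eps\bx)J\bigr)^{-1}(\dot{\bv}-E)$. After integrating by parts, the only extra term is $\eps\int_0^t\frac{d}{ds}\bigl(b(\eps\bx)J\bigr)^{-1}\bv\,ds$, which your already established bound on $\bv$ controls directly.

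Your version is shorter and needs neither the change of variables nor a Gronwall step there. You do use $b\geq c_0$ along the whole trajectory, which the standing assumption provides. The paper's version only divides by $b_0$ and reuses the frozen rotation $b_0J$ that defines the linear part $A$ of the scheme.

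You are also more explicit than the paper that the first-order derivative bounds rely on $\hat{\bw}=\mathcal{O}(\eps)$ and $\hat{\bx}-\bx_0=\mathcal{O}(\eps)$. In other words, the estimate is for $\hat{\by}$ on the exact solution, which is how it is used in Theorem~\ref{2d-conv}.
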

\begin{proof}
Introducing a variable transformation $\tilde{\bw}(t)=\fe^{-b_{0}Jt/\eps}\bw(t)$, the system \eqref{2d-theo} becomes
\begin{subequations}\label{equ-22-8}
\begin{align}
&\dot{\bx}(t)=\frac{1}{\eps}\fe^{b_{0}Jt/\eps}\tilde{\bw}(t), \quad \bx(0)=\bx_0, \label{equ-22-8a} \\
&\dot{\tilde{\bw}}(t)=\frac{b(\eps\bx)-b_{0}}{\eps}J\tilde{\bw}(t)+\fe^{-b_{0}Jt/\eps}\eps E(\bx(t)), \quad \tilde{\bw}(0)=\eps \bv_{0}. \label{equ-22-8b}
\end{align}
\end{subequations}
Taking the inner product on both sides of \eqref{equ-22-8a} and \eqref{equ-22-8b} with $\bx(t)$ and $\tilde{\bw}(t)$, respectively, and using $J^{\intercal}=-J$ together with the Cauchy–Schwarz inequality yields
\begin{equation*}
\frac{d}{dt}\norm{\bx(t)}^2\leq \frac{2}{\eps}\norm{\bx(t)}\norm{\tilde{\bw}(t)}, \quad \frac{d}{dt}\norm{\tilde{\bw}(t)}^{2}\leq 2\eps\norm{E(\bx(t))}\norm{\tilde{\bw}(t)},
\end{equation*}
we thus conclude that
\begin{equation*}
\begin{aligned}
\frac{d}{dt}\norm{\bx(t)}&\leq \frac{1}{\eps}\norm{\tilde{\bw}(t)}, \\
\frac{1}{\eps}\frac{d}{dt}\norm{\tilde{\bw}(t)}&\leq \norm{E(\bx(t))}\leq \norm{E(\bx_{0})}+C_{E}\norm{\bx(t)-\bx_{0}} \\
&\leq \norm{E(\bx_{0})}+C_{E}\norm{\bx(t)}+C_{E}\norm{\bx_{0}}.
\end{aligned}
\end{equation*}
Summing the two inequalities and invoking Gronwall's inequality gives
\begin{align*}
\norm{\bx(t)}+\frac{1}{\eps}\norm{\tilde{\bw}(t)}\leq C_{1}, \quad 0\leq t\leq T,
\end{align*}
where $C_{1}=(\norm{\bx_{0}}+\norm{\bv_{0}}+(\norm{E(\bx_0)}+C_{E}\norm{\bx_{0}})T)\fe^{T\text{max}\{1,C_{E}\}}$, which implies that  $$\norm{\tilde{\bw}(t)}=\norm{\bw(t)}\leq C_{1}\eps.$$
Employing the integral formula given in \eqref{equ-22-8a}, followed by integration by parts, we obtain
\begin{equation*}
\begin{aligned}
\bx(t)=&\bx(0)+\int_{0}^{t}\frac{\fe^{b_{0}J\theta /\eps}}{\eps}\tilde{\bw}(\theta)d\theta \\
=&\bx(0)-\frac{\eps J}{b_{0}}\left( \frac{\tilde{\bw}(t)}{\eps}\fe^{b_{0}Jt/\eps}-\frac{\tilde{\bw}(0)}{\eps}\right)+\frac{\eps J}{b_{0}}\int_{0}^{t}\frac{\fe^{b_{0}J\theta/\eps}}{\eps}\dot{\tilde{\bw}}(\theta)d\theta.
\end{aligned}
\end{equation*}
Thus, \eqref{equ-22-8b} yields
\begin{equation*}
\begin{aligned}
\bx(t)=&\bx(0)-\frac{\eps J}{b_{0}}\left( \frac{\tilde{\bw}(t)}{\eps}\fe^{b_{0}Jt/\eps}-\frac{\tilde{\bw}(0)}{\eps}\right) \\
&+\frac{\eps J}{b_{0}}\int_{0}^{t}\left(\fe^{b_{0}J\theta/\eps}\frac{b(\eps\bx(\theta))-b_{0}}{\eps^{2}}J\tilde{\bw}(\theta)+E(\bx(\theta))\right)d\theta.
\end{aligned}
\end{equation*}
From this, we deduce that for all $0\leq t\leq T$,
\begin{equation*}
\frac{1}{\eps}\norm{\bx(t)-\bx_{0}}\leq \frac{1}{b_{0}}(C_{1}+\norm{\bv_{0}})+\frac{T\norm{E}_{\infty}}{b_{0}}+\frac{\eps C_{1}C_{b}}{b_{0}}\int_{0}^{t}\frac{1}{\eps}\norm{\bx(\theta)-\bx_{0}}d\theta,
\end{equation*}
where $\norm{E}_{\infty}=\mbox{sup}\{E(\bx):\norm{\bx}\leq C_{1}\}$. Gronwall's inequality leads to the bound
\begin{equation*}
\frac{1}{\eps}\norm{\bx(t)-\bx_{0}}\leq C, \forall \ 0\leq t\leq T,
\end{equation*}
where $C >0$ is a constant independent of $\eps$. Moreover, representing \eqref{2d-theo} as \eqref{equ-5-7-2}, we have
\begin{equation*}
\norm{F(\by)}=\left\lVert\frac{b(\eps\bx)-b_{0}}{\eps}J\tilde{\bw}+\eps E(\bx)\right\rVert\leq C_{b}\norm{\bx-\bx_{0}} \norm{\tilde{\bw}}+C \eps\leq C\eps^{2}+C\eps\leq C\eps,
\end{equation*}
and 
\begin{equation*}
\begin{aligned}
&\norm{\dot{\bx}(t)}=\frac{\norm{\bw(t)}}{\eps}\leq C, \\
&\norm{\dot{\bw}(t)}\leq \frac{b_{0}}{\eps}\norm{\bw(t)}+\left\lVert\frac{b(\eps\bx)-b_{0}}{\eps}J \bw+\eps E(\bx)\right\rVert\leq C(1+\eps)\leq C.
\end{aligned}
\end{equation*}
It follows that $\norm{\dot{\by}}=\sqrt{\norm{\dot{\bx}}^{2}+\norm{\dot{\bw}}^{2}}\leq C$.

We now proceed to derive bounds on the higher-order derivatives of 
$F(\by)$. For clarity, $F(\by)$ is written in component form by \eqref{equ-5-7-2}
\begin{equation}
F(\by):=\begin{pmatrix}
F_{1} \\ F_{2} \\ F_{3}  \\ F_{4}
\end{pmatrix}=
\begin{pmatrix}
0 \\ 0 \\ \frac{b(\eps\bx)-b_{0}}{\eps}w_{2}+\eps E_{1}(\bx) \\
-\frac{b(\eps\bx)-b_{0}}{\eps}w_{1}+\eps E_{2}(\bx)
\end{pmatrix}.
\end{equation}
For the $k$-th ($k\geq 1$) order partial derivatives of $F(\by)$, denoting $\alpha_{i}=\eps x_{i}$ for $i=1,2$, we consider the following three cases:

\textit{Case 1.} If there is no partial derivative with respect to $w$ and a $k$-th order partial derivative with respect to  $x$, then the following  results hold
\begin{equation*}
\begin{aligned}
&\frac{\partial^{k}F_{3}}{\partial x_{j_{1}}\cdots\partial x_{j_{k}}}=\eps^{k-1}\frac{\partial^{k}b(\eps\bx)}{\partial\alpha_{j_1}\cdots\partial\alpha_{j_{k}}}w_{2}+\eps \frac{\partial^{k}E_{1}}{\partial x_{j_{1}}\cdots\partial x_{j_{k}}}, \\
&\frac{\partial^{k}F_{4}}{\partial x_{j_{1}}\cdots\partial x_{j_{k}}}=-\eps^{k-1}\frac{\partial^{k}b(\eps\bx)}{\partial\alpha_{j_1}\cdots\partial\alpha_{j_{k}}}w_{1}+\eps \frac{\partial^{k}E_{2}}{\partial x_{j_{1}}\cdots\partial x_{j_{k}}}, \ j_{1},\cdots,j_{k}\in\{1,2\}.
\end{aligned}
\end{equation*}
This implies that
\begin{equation}\label{deri-1}
\left\Vert \frac{\partial^{k}F_{l}}{\partial x_{j_{1}}\cdots\partial x_{j_{k}}} \right\Vert\leq C\eps, \ \mbox{for} \ k\geq 1, \ l=3,4.
\end{equation}

\textit{Case 2.} If there is a first-order partial derivative with respect to $w$ and a $k-1$-th order partial derivative with respect to  $x$, then we have
\begin{equation*}
\begin{aligned}
&\frac{\partial^{k}F_{3}}{\partial x_{j_{1}}\cdots\partial x_{j_{k-1}}\partial w_{2}}=\begin{cases}
\frac{b(\eps\bx)-b_{0}}{\eps}, \ \mbox{if} \ k=1, \\
\eps^{k-2}\frac{\partial^{k-1}b(\eps\bx)}{\partial \alpha_{j_{1}}\cdots\partial \alpha_{j_{k-1}}}, \ \mbox{if} \ k\geq 2, 
\end{cases}\\
&\frac{\partial^{k}F_{4}}{\partial x_{j_{1}}\cdots\partial x_{j_{k-1}}\partial w_{1}}=\begin{cases}
-\frac{b(\eps\bx)-b_{0}}{\eps}, \ \mbox{if} \ k=1, \\
-\eps^{k-2}\frac{\partial^{k-1}b(\eps\bx)}{\partial \alpha_{j_{1}}\cdots\partial \alpha_{j_{k-1}}}, \ \mbox{if} \ k\geq 2,
\end{cases} \\
&\frac{\partial^{k}F_{3}}{\partial x_{j_{1}}\cdots\partial x_{j_{k-1}}\partial w_{1}}=\frac{\partial^{k}F_{4}}{\partial x_{j_{1}}\cdots\partial x_{j_{k-1}}\partial w_{2}}=0.
\end{aligned}
\end{equation*}
From this, we deduce that
\begin{equation}\label{deri-2}
\begin{aligned}
&\left\Vert \frac{\partial^{k}F_{3}}{\partial x_{j_{1}}\cdots\partial x_{j_{k-1}}\partial w_{2}} \right\Vert=\left\Vert \frac{\partial^{k}F_{4}}{\partial x_{j_{1}}\cdots\partial x_{j_{k-1}}\partial w_{1}}\right\Vert\leq \begin{cases}
C\eps, \ \mbox{for} \ k=1, \\
C\eps^{k-2}, \ \mbox{for} \ k\geq 2,
\end{cases} \\
&\left\Vert\frac{\partial^{k}F_{3}}{\partial x_{j_{1}}\cdots\partial x_{j_{k-1}}\partial w_{1}}\right\Vert=\left\Vert\frac{\partial^{k}F_{4}}{\partial x_{j_{1}}\cdots\partial x_{j_{k-1}}\partial w_{2}}\right\Vert=0.
\end{aligned}
\end{equation}

\textit{Case 3.} Containing at least two $w$ components, we obtain
\begin{equation}\label{equ-5-19-2}
\partial^{k}F_{3}=\partial^{k}F_{4}=0, \ \mbox{for} \ k\geq 2. 
\end{equation}
Consequently, combining \eqref{deri-1}, \eqref{deri-2} and \eqref{equ-5-19-2} yields \eqref{equ-5-19-1}, which completes the proof. \hfill$\square$
\end{proof}

Let $\bz(t_{n})-\bZ_{n}$ denote the error between the exact solution $\bz(t_{n})$ of \eqref{equ-5-15-1} and the approximation $\bZ_{n}$ from \eqref{LLES-1}.
In what follows, the convergence result of the exponential integrators is established.

\begin{theorem}\label{2d-conv}
Assume that the solution $\bx(t), \bv(t)$ of the CPD \eqref{2d}  satisfies the condition of Lemma \ref{lem-4}, and let the numerical solution $\bx_{n}, \bv_{n}$ are obtained from \eqref{scheme-1}–\eqref{scheme-3}. Then under the maximal ordering scaling strong magnetic field, for any $0<h<h_{0}$, we have
\begin{equation}\label{equ-4-27-1}
\norm{\bx_{n}-\bx(t_{n})}\leq C\eps h^{k+1}, \quad \norm{\bv_{n}-\bv(t_n)}\leq Ch^{k+1}, \quad \mbox{for} \ k\geq 2,
\end{equation}
where the constants $C, h_{0}$ are independent of $\eps, h$.
\end{theorem}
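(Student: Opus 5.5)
The proof will follow the standard local-error/stability (Lady Windermere's fan) route. All estimates are done in the scaled variables $\by=(\bx^{\intercal},\bw^{\intercal})^{\intercal}$ of \eqref{2d-theo}. The target is a global bound $\norm{\by_n-\by(t_n)}\le C\eps h^{k+1}$. Since $\bv=\bw/\eps$, this yields the velocity bound $Ch^{k+1}$ and the position bound $C\eps h^{k+1}$ at once. The key is therefore to prove that every one-step quantity carries an extra factor $\eps$.

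\textbf{Local error.} Compare the exact extended system \eqref{equ-5-15-1} with its truncation \eqref{trunc}, both started at $e_1$ with $\hbz=\bz(t_n)$. By variation of constants,
\begin{equation*}
\bz^{[k,\bz(t_n)]}(t_{n+1})-\tilde{\bz}^{[k,\bz(t_n)]}(t_{n+1})=\int_{t_n}^{t_{n+1}}\fe^{(\frac1\eps A_1^{[k]}+A_0^{[k]})(t_{n+1}-s)}\bR^{[k]}(\bz^{[k,\bz(t_n)]}(s))\,ds .
\end{equation*}
The propagator must be bounded uniformly in $\eps$. Lemma \ref{lem-1} gives $\norm{\fe^{A_1^{[k]}t/\eps}}\le C$, and $A_0^{[k]}(\hbz)$ is built from Taylor coefficients of $\mf$. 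These coefficients are $\mathcal O(1)$ by \eqref{equ-5-19-1}: the only $\mathcal O(1)$ entries are the second-order mixed derivatives $\partial_{x}\partial_{w}F$, and all others are $\mathcal O(\eps)$. A Gronwall/perturbation argument, carried out in the basis given by Lemma \ref{lem-2}(2), then bounds the full propagator by $C\fe^{Ch}$.

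For the remainder \eqref{remainder}, Lemma \ref{lem-4} gives $\norm{\by(s)-\by(t_n)}\le Ch$. The Taylor remainder $r^{k-j+2}$ contains derivatives of order $k-j+2$. If $k-j+2\ne2$, these are $\mathcal O(\eps)$ and contribute $C\eps h^{k-j+2}$. If $k-j+2=2$, the polynomial prefactor $(\bz-\hbz)^{\chi(\xi;l)}$ of degree $j-1=k-1\ge1$ must be used. Every nonvanishing second-order derivative of $F$ involves one $w$-direction, so the remainder is $\mathcal O(h\,\norm{\bw(s)-\bw(t_n)})$. Since $\norm{\bw(s)-\bw(t_n)}\le 2C\eps$ by \eqref{solu-bound-1}, this gives an $\eps$ factor.

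Combining, the components $\Pi_2^{d+2}$ of the local defect should be $\mathcal O(\eps h^{k+2})$, which requires $k\ge2$. The hardest point is to show that the integrated defect projected onto rows $2,\dots,d+2$ gains this full power. These rows involve only $j=1$, so the relevant remainder is $r^{k+1}$ of $F$ with $k+1\ge3$, which is $\mathcal O(\eps)\cdot h^{k+1}$ by \eqref{equ-5-19-1}. However, the propagator couples higher rows into rows $2,\dots,d+2$ through $A_0^{[k]}$, so the lower-triangular block structure has to be tracked carefully. I will use $S^{[k]}$ from Lemma \ref{lem-2}(3), which has polynomial entries in $\hbz$, to transform to $A_1^{[k]}(\mathbf 0)$. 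In that basis I would check that the coupling from degree-$j$ rows into degree-$1$ rows always carries either a factor $\eps$ or an $\mathcal O(1)$ second derivative paired with a $w$-increment of size $\mathcal O(\eps)$.

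\textbf{Stability.} Next I will show that the one-step map $\Phi_h:\bZ_n\mapsto\bZ_{n+1}$ satisfies $\norm{\Phi_h(\bZ)-\Phi_h(\hat\bZ)}\le(1+Ch)\norm{\bZ-\hat\bZ}$ on an $\mathcal O(\eps)$-neighborhood of the exact trajectory. The matrices $A_0^{[k]}$ and $A_1^{[k]}$ depend smoothly (polynomially) on $\hbz$, and Lemma \ref{lem-2} gives $A_1^{[k]}(\hbz)=S A_1^{[k]}(\mathbf 0)S^{-1}$. Hence $\fe^{hA_1^{[k]}(\hbz)/\eps}e_1$ projected onto rows $2,\dots,d+2$ differs between two reference points by at most $C\norm{\bZ-\hat\bZ}$, with no factor $1/\eps$. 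This is because $S_{1,0}=-\hbz$ enters only through the differences $\hbz-\hat{\hbz}$, which are absorbed by the projection $\bZ_{n+1}=\Pi_2^{d+2}(\cdot)+\bZ_n$. The contribution of $A_0^{[k]}$ is handled by Duhamel, giving an $\mathcal O(h)$ Lipschitz constant.

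Finally, the errors are summed: $\norm{\bZ_n-\bz(t_n)}\le\sum_m(1+Ch)^{n-m}C\eps h^{k+2}\le C\eps h^{k+1}$. A bootstrap for $h<h_0$ keeps the numerical solution within the neighborhood where the bounds of Lemma \ref{lem-4} apply. Undoing $\bw=\eps\bv$ gives \eqref{equ-4-27-1}.
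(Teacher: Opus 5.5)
\textbf{The gap is in your stability step.} Your architecture is the paper's: the local defect of the truncated extended system at $\hbz=\bz(t_n)$, then a comparison of the truncated systems centred at $\bz(t_n)$ and at $\bZ_n$ through the similarity $S^{[k]}$ of Lemma~\ref{lem-2}, then discrete Gronwall. But the bound $\norm{\Phi_h(\bZ)-\Phi_h(\hat\bZ)}\le(1+Ch)\norm{\bZ-\hat\bZ}$, with $C$ independent of $\eps$, is false in the Euclidean norm, and the summation $\sum_m(1+Ch)^{n-m}$ built on it collapses.

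Your own computation shows why. After the $S^{[k]}$-transformation and the projection, the stiff part of one step is exactly the linear map $e\mapsto \fe^{hA_1/\eps}e$. The upper-right block of $\fe^{hA/\eps}$ is $(b_0J)^{-1}(\fe^{b_0Jh/\eps}-I)$, and its norm is $2/b_0$ whenever $b_0h/\eps=\pi$, however small $h$ is. So $\norm{\fe^{hA_1/\eps}}\ge\sqrt{1+4/b_0^2}$ for such ratios. A per-step Lipschitz constant is therefore only $C+Ch$ with $C>1$, and $(C+Ch)^{T/h}$ blows up.

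The missing idea is to stop iterating a Lipschitz bound and use that this stiff map is the same at every step. In the $\hbz=\mathbf 0$ frame, $A_1^{[k]}(\mathbf 0)=\mathrm{diag}\{0,A_1,A_1^{[[2]]},\dots\}$ does not depend on the reference point. The error therefore obeys the exact recursion $e_{n+1}=\fe^{hA_1/\eps}e_n+\Pi_2^{d+2}(\Xi_1^{[k]}+\Xi_2^{[k]})$ with $\norm{\Xi_2^{[k]}}\le Ch\norm{e_n}$. Unrolling it produces the powers $\fe^{(n-j)hA_1/\eps}$, which Lemma~\ref{lem-1} bounds uniformly, and the discrete Gronwall lemma closes the argument. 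This is what the paper does.

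Alternatively, measure errors in $\norm{P^{-1}\cdot}$, where $P$ diagonalizes $A_1$. Its condition number is bounded uniformly in $\eps$ because it depends only on $b_0\ge c_0$. In that norm $\fe^{tA_1/\eps}$ is unitary, and your $(1+Ch)$ estimate becomes correct. One of these two devices has to be added.

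\textbf{Your local-error treatment is sound and more careful than the paper's.} The paper asserts $\norm{\bR^{[k]}}\le C\eps h^{k+1}$ for all rows. However, the degree-$k$ rows contain $r^{2}$, which \eqref{equ-5-19-3} does not cover, and for those rows only $\mathcal O(\eps h^{k})$ holds, as you note.

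The point you flag as hardest needs no $\eps$-bookkeeping of the couplings. Write Duhamel with the propagator $\fe^{tA_1^{[k]}(\hbz)/\eps}$ and forcing $A_0^{[k]}\Xi_1^{[k]}+\bR^{[k]}$, as in \eqref{equ-4-30-7}. This propagator is block lower triangular, its degree-one diagonal block is $\fe^{tA_1/\eps}$, and the degree-zero row of the forcing vanishes. Hence $\Pi_2^{d+2}\Xi_1^{[k]}(t_{n+1})$ sees only the degree-one rows, which gives the bound $Ch\bigl(\eps h^{k+1}+\sup_\theta\norm{\Xi_1^{[k]}(\theta)}\bigr)$. Gronwall applied with the crude bound $\norm{\bR^{[k]}}\le C\eps h^{k}$ gives $\sup_\theta\norm{\Xi_1^{[k]}(\theta)}\le C\eps h^{k+1}$. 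The projected defect is therefore $\mathcal O(\eps h^{k+2})$, as required.
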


\begin{proof}
We first bound the local error using the truncated system \eqref{trunc}. Applying the variation-of-constants formula to \eqref{equ-5-15-1} and \eqref{trunc} yields
\begin{equation}\label{equ-4-30-5}
\begin{aligned}
\bz^{[k,\bz(t_{n})]}(t)=&\fe^{\frac{1}{\eps}A_{1}^{[k]}(\bz(t_{n}))(t-t_{n})}+\int_{t_{n}}^{t}\fe^{\frac{1}{\eps}A_{1}^{[k]}(\bz(t_{n}))(t-\theta)}A_{0}^{[k]}(\bz(t_{n}))\bz^{[k,\bz(t_{n})]}(\theta)d\theta \\
&+\int_{t_{n}}^{t}\fe^{\frac{1}{\eps}A_{1}^{[k]}(\bz(t_{n}))(t-\theta)}\bR^{[k]}(\bz^{[k,\bz(t_{n})]}(\theta))d\theta,
\end{aligned}
\end{equation}
and 
\begin{equation}\label{equ-4-30-6}
\tilde{\bz}^{[k,\bz(t_{n})]}(t)=\fe^{\frac{1}{\eps}A_{1}^{[k]}(\bz(t_{n}))(t-t_{n})}+\int_{t_{n}}^{t}\fe^{\frac{1}{\eps}A_{1}^{[k]}(\bz(t_{n}))(t-\theta)}
A_{0}^{[k]}(\bz(t_{n}))\tilde{\bz}^{[k,\bz(t_{n})]}(\theta)d\theta.
\end{equation}
Subtracting \eqref{equ-4-30-6} from \eqref{equ-4-30-5} gives
\begin{equation}\label{equ-4-30-7}
\begin{aligned}
\Xi_{1}^{[k]}(t):=&\bz^{[k,\bz(t_{n})]}(t)-\tilde{\bz}^{[k,\bz(t_{n})]}(t) \\
=&\int_{t_{n}}^{t}\fe^{\frac{1}{\eps}A_{1}^{[k]}(\bz(t_{n}))(t-\theta)}A_{0}^{[k]}(\bz(t_{n}))(\bz^{[k,\bz(t_{n})]}(\theta)-\tilde{\bz}^{[k,\bz(t_{n})]}(\theta))d\theta \\
&+\int_{t_{n}}^{t}\fe^{\frac{1}{\eps}A_{1}^{[k]}(\bz(t_{n}))(t-\theta)}\bR^{[k]}(\bz^{[k,\bz(t_{n})]}(\theta))d\theta.
\end{aligned}
\end{equation}
From Lemma \ref{lem-2}, we have $\norm{\dot{\by}}\leq C$ for all $t\in [0,T]$, hence
\begin{equation*}
\begin{aligned}
&\norm{\by(\theta)-\by(t_{n})}\leq C(\theta-t_{n})\leq Ch,  \\
&\norm{\bz(\theta)-\bz(t_{n})}\leq \norm{\by(\theta)-\by(t_{n})}+(s-t_{n})\leq Ch,  \quad t_{n}\leq \theta \leq t_{n+1}.
\end{aligned}
\end{equation*}
Under the maximal ordering scaling, it follows from \eqref{equ-5-19-1} of Lemma \ref{lem-4} that the $j$-th ($j\geq 3$) order Taylor remainder terms $\br^{j}$ satisfy
\begin{equation}\label{equ-5-19-3}
\norm{\br^{j}(\by(t);\by(t_{n}))}\leq C\eps h^{j}, \quad j=3,\ldots,k+1.
\end{equation}
For the case of a constant strong magnetic field, $F(\by)$ simplifies to $F(\by)=\eps[0,E(\bx)]^{\intercal}$. Consequently, one verifies that \eqref{equ-5-19-3} holds in a similar manner.
Recalling that $\abs{\xi}=j$, we obtain from \eqref{remainder}
\begin{equation}\label{equ-5-7-3}
\norm{\bR^{[k]}(\bz^{[k,\bz(t_{n})]}(\theta))}\leq C\eps h^{k+1}.
\end{equation}
It follows that
\begin{equation*}
\norm{\Xi_{1}^{[k]}(t_{n+1})}\leq C\int_{t_{n}}^{t_{n+1}}\norm{\bz^{[k,\bz(t_{n})]}(\theta)-\tilde{\bz}^{[k,\bz(t_{n})]}(\theta)}d\theta+C\eps h^{k+2},
\end{equation*}
and Gronwall's inequality immediately gives
\begin{equation}\label{loc-err}
\norm{\Xi_{1}^{[k]}(t_{n+1})}\leq C\eps h^{k+2}.
\end{equation}

The global error analysis proceeds as follows. Employing the nonsingular transformation from Lemma \ref{lem-2}, we introduce  $\bZ^{[k,\bm{0}]}=(S^{[k]}(\bZ_{n}))^{-1}\bZ^{[k,\bZ_{n}]}$ and $\tilde{\bz}^{[k,\bm{0}]}=(S^{[k]}(\bz(t_{n})))^{-1}\tilde{\bz}^{[k,\bz(t_{n})]}$. Inserting these into \eqref{LLES-1} and \eqref{equ-4-30-6} leads to
\begin{equation*}
\begin{aligned}
&\frac{d\bZ^{[k,\bm{0}]}}{dt}=\frac{1}{\eps}A_{1}^{[k]}(\bm{0})\bZ^{[k,\bm{0}]}+\tilde{A}_{0}^{[k]}(\bZ_{n})\bZ^{[k,\bm{0}]}, \quad \bZ^{[k,\bm{0}]}(t_{n})=(S^{[k]}(\bZ_{n}))^{-1}e_{1}, \\
&\frac{d\tilde{\bz}^{[k,\bm{0}]}}{dt}=\frac{1}{\eps}A_{1}^{[k]}(\bm{0})\tilde{\bz}^{[k,\bm{0}]}+\tilde{A}_{0}^{[k]}(\bz(t_{n}))\tilde{\bz}^{[k,\bm{0}]}, \quad \tilde{\bz}^{[k,\bm{0}]}(t_{n})=(S^{[k]}(\bz(t_{n})))^{-1}e_{1},
\end{aligned}
\end{equation*}
where $\tilde{A}_{0}^{[k]}=(S^{[k]}(\cdot))^{-1}A_{0}^{[k]}(\cdot)S^{[k]}(\cdot)$. Applying the variation-of-constants formula to above and subtracting gives
\begin{equation}\label{equ-5-7-4}
\tilde{\bz}^{[k,\bm{0}]}(t)-\bZ^{[k,\bm{0}]}(t)=\fe^{\frac{1}{\eps}A_{1}^{[k]}(\bm{0})(t-t_{n})}((S^{[k]}(\bz(t_{n})))^{-1}-(S^{[k]}(\bZ_{n}))^{-1})e_{1}+\Xi^{[k]}_{2}(t),
\end{equation}
with $\Xi^{[k]}_{2}(t)$ defined by
\begin{equation}\label{equ-5-7-5}
\begin{aligned}
\Xi^{[k]}_{2}(t)=&\int_{t_{n}}^{t}\fe^{\frac{1}{\eps}A_{1}^{[k]}(\bm{0})(t-\theta)}(\tilde{A}_{0}^{[k]}(\bz(t_{n}))-\tilde{A}_{0}^{[k]}(\bZ_{n}))\tilde{\bz}^{[k,0]}(\theta)d\theta \\
&+\int_{t_{n}}^{t}\fe^{\frac{1}{\eps}A_{1}^{[k]}(\bm{0})(t-\theta)}\tilde{A}_{0}^{[k]}(\bZ_{n})(\tilde{\bz}^{[k,\bm{0}]}(\theta)-\bZ^{[k,0]}(\theta))d\theta.
\end{aligned}
\end{equation}
By Lemma \ref{lem-2} and \eqref{equ-21-4}, the matrices $A^{[k]}_{0}(\cdot)$, $S^{[k]}(\cdot)$ and $(S^{[k]}(\cdot))^{-1}$ are Lipschitz continuous with an $\eps$-independent constant, hence so is $\tilde{A}^{[k]}_{0}(\cdot)$. Combining \eqref{equ-5-7-4} and \eqref{equ-5-7-5} yields 
\begin{align*}
\norm{\tilde{\bz}^{[k,\bm{0}]}(t)-\bZ^{[k,\bm{0}]}(t)}\leq C\norm{\bz(t_{n})-\bZ_{n}}+C\int_{t_{n}}^{t}\norm{\tilde{\bz}^{[k,\bm{0}]}(\theta)-\bZ^{[k,\bm{0}]}(\theta)}d\theta.
\end{align*}
Gronwall's inequality then gives
\begin{equation}\label{equ-5-7-6}
\norm{\tilde{\bz}^{[k,\bm{0}]}(t)-\bZ^{[k,\bm{0}]}(t)}\leq C\fe^{Ct}\norm{\bz(t_{n})-\bZ_{n}}, \quad t_{n}\leq t\leq t_{n+1}.
\end{equation}
Inserting this into the second term of \eqref{equ-5-7-5} produces 
\begin{equation}\label{equ-5-7-7}
\norm{\Xi^{[k]}_{2}(t_{n+1})}\leq Ch\norm{\bz(t_{n})-\bZ_{n}}.
\end{equation}
According to $(iii)$ of the Definition \ref{def-2}, we have $\tilde{\bz}(t_{n+1})=\Pi_{2}^{d+2}\tilde{\bz}^{[k,\bm{0}]}(t_{n+1})$ and $\bZ_{n+1}=\Pi_{2}^{d+2}\bZ^{[k,\bm{0}]}(t_{n+1})$. Lemma 3.3 of \cite{QDF} shows that at $\hbz=\bm{0}_{(d+1)\times 1}$, 
$$A^{[k]}_{1}(\bm{0})=\text{diag}\{0,A_{1},A^{[[2]]}_{1},\cdots,A^{[[k]]}_{1}\}$$
with $A^{[[j]]}_{1}$ are $D^{[[j]]}\times D^{[[j]]}$ matrices for $j=2,\cdots,k$. The block lower triangular form of $(S^{[k]}(\cdot))^{-1}$  ensures $((S^{[k]}(\bz(t_{n})))^{-1}-(S^{[k]}(\bZ_{n}))^{-1})e_{1}=(0,\bz(t_{n})-\bZ_{n},\ast,\cdots,\ast)^{\intercal}$.
Consequently, projecting \eqref{equ-5-7-4} yields
\begin{equation*}
\tilde{\bz}(t_{n+1})-\bZ_{n+1}=\fe^{\frac{1}{\eps}A_{1}h}(\bz(t_{n})-\bZ_{n})+\Pi_{2}^{d+2}\Xi^{[k]}_{2}(t_{n+1}).
\end{equation*}
Projecting \eqref{equ-4-30-7} analogously with $\Pi_{2}^{d+2}$ and adding to the above gives 
\begin{equation*}
\bz(t_{n+1})-\bZ_{n+1}=\fe^{\frac{1}{\eps}A_{1}h}(\bz(t_{n})-\bZ_{n})+\Pi_{2}^{d+2}(\Xi^{[k]}_{1}(t_{n+1})+\Xi^{[k]}_{2}(t_{n+1})).
\end{equation*}
Solving the recursion with $\bz(0)=\bZ_{0}$ leads to 
\begin{equation}\label{equ-5-7-8}
\bz(t_{n})-\bZ_{n}=\sum\limits_{j=1}^{n}\fe^{\frac{n-j}{\eps}A_{1}h}\Pi_{2}^{d+2}(\Xi^{[k]}_{1}(t_{j})+\Xi^{[k]}_{2}(t_{j})).
\end{equation}
Combining \eqref{loc-err}, \eqref{equ-5-7-7}, and Lemma \ref{lem-1} produces
\begin{equation*}
\norm{\bz(t_{n})-\bZ_{n}}\leq \sum\limits_{j=1}^{n}(Ch\norm{\bz(t_{j-1})-\bZ_{j-1}}+C\eps h^{k+2}).
\end{equation*}
Applying the discrete Gronwall's lemma then gives $\norm{\bz(t_{n})-\bZ_{n}}\leq C\eps h^{k+1}$, from which it follows that
\begin{equation*}
\norm{\bx_{n}-\bx(t_{n})}+\eps\norm{\bv_{n}-\bv(t_{n})}\leq C\eps h^{k+1}.
\end{equation*}
This completes the proof.  
\hfill$\square$
\end{proof}

\section{Numerical results}\label{sec-4}

In this section, numerical experiments are conducted to test the proposed method on the CPD under  maximal ordering scaling strong magnetic fields. In the following, the $k+1$-th order exponential integrator based on local linear expansion, referred to as EI$(k+1)$, employs the matrices $A_{1}^{[k]}$ and $A_{0}^{[k]}$ with $k\geq 2$.  The reference solution is computed using the classical fourth-order Runge–Kutta method (RK4) with a very fine step size $h=4\times 10^{-7}$. To test the convergence of the proposed exponential integrator, we evaluate the relative errors in position and velocity separately, i.e., $err_{\bx}=\frac{\norm{\bx^{n}-\bx(t_{n})}_{\infty}}{\norm{\bx(t_{n})}_{\infty}}$ and $err_{\bv}=\frac{\norm{\bv^{n}-\bv(t_{n})}_{\infty}}{\norm{\bv(t_{n})}_{\infty}}$. The relative error in energy is defined as $err_{H}=\frac{\abs{H(t)-H(0)}}{H(0)}$.

\textbf{Example 1.}
In this experiment, we consider a constant strong magnetic field $b(\bx)=1$ and an electric field derived from $U(\bx)=-\sin(x_{1}/2)\sin(x_{2})$ as $E(\bx)=-\nabla U(\bx)$. The initial data are set to $\bx_{0}=(0.8,0.9)^{\intercal}$ and $\bv_{0}=(0.5,0.6)^{\intercal}$. We first test the convergence order of EI3 to EI6 by constructing the matrices $A_{1}^{[k]}$ and $A_{0}^{[k]}$ for $k=2,\ldots,5$. In Figure \ref{fig-1-1}, the first row displays the temporal errors versus step size $h$ at final time $T=1$ with a fixed $\eps=1/2^4$. It is observed that the EI$(k+1)$-order methods achieve $(k+1)$-th order convergence, respectively. The second row illustrates the error dependence on $\eps$ for a fixed $h=1/2^4$.  In all cases, the position error behave as $\mathcal{O}(\eps)$. The velocity error of EI3 is $\mathcal{O}(1)$, whereas that of EI4 through EI6 approaches $\mathcal{O}(\eps)$. Consequently, Figure \ref{fig-1-1} demonstrates that EIs achieve high-order, improved uniform accuracy without requiring order conditions.

 For a clearer observation of the convergence behavior, Figure \ref{fig-1-2} plots the temporal errors of EI3 with different step sizes $h$ and $\eps$ up to $T=1$. For a fixed step size $h$, the position error $\bx$ exhibits a first-order improvement in $\eps$, while the velocity error $\bv$ remains $\mathcal{O}(1)$, consistent with the theoretical results. Similarly, the temporal errors of EI4 up to $T=1$ are shown in Figure \ref{fig-1-3}, where both the error of $\bx$ and $\bv$ achieve the accuracy $\mathcal{O}(\eps)$, and the velocity error performs better than the theoretical bound. The energy error obtained with EI4 is shown in Figure \ref{fig-1-4}, which demonstrates excellent long-term near-conservation property over the time interval $[0,1000]$.

\begin{figure}[htbp]
    \centering
    \includegraphics[height=3.8cm,width=5.8cm]{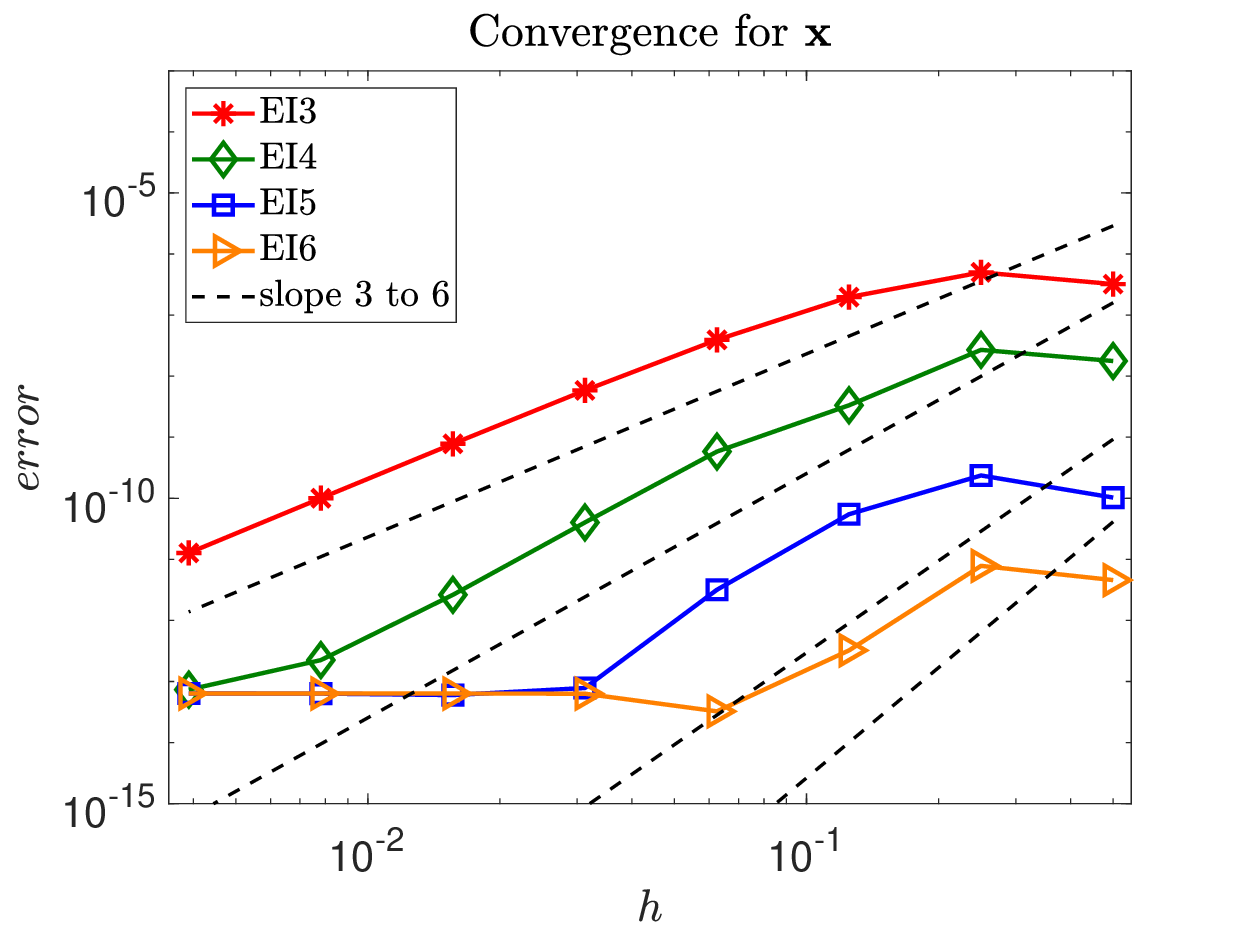}
    \hfill
    \includegraphics[height=3.8cm,width=5.8cm]{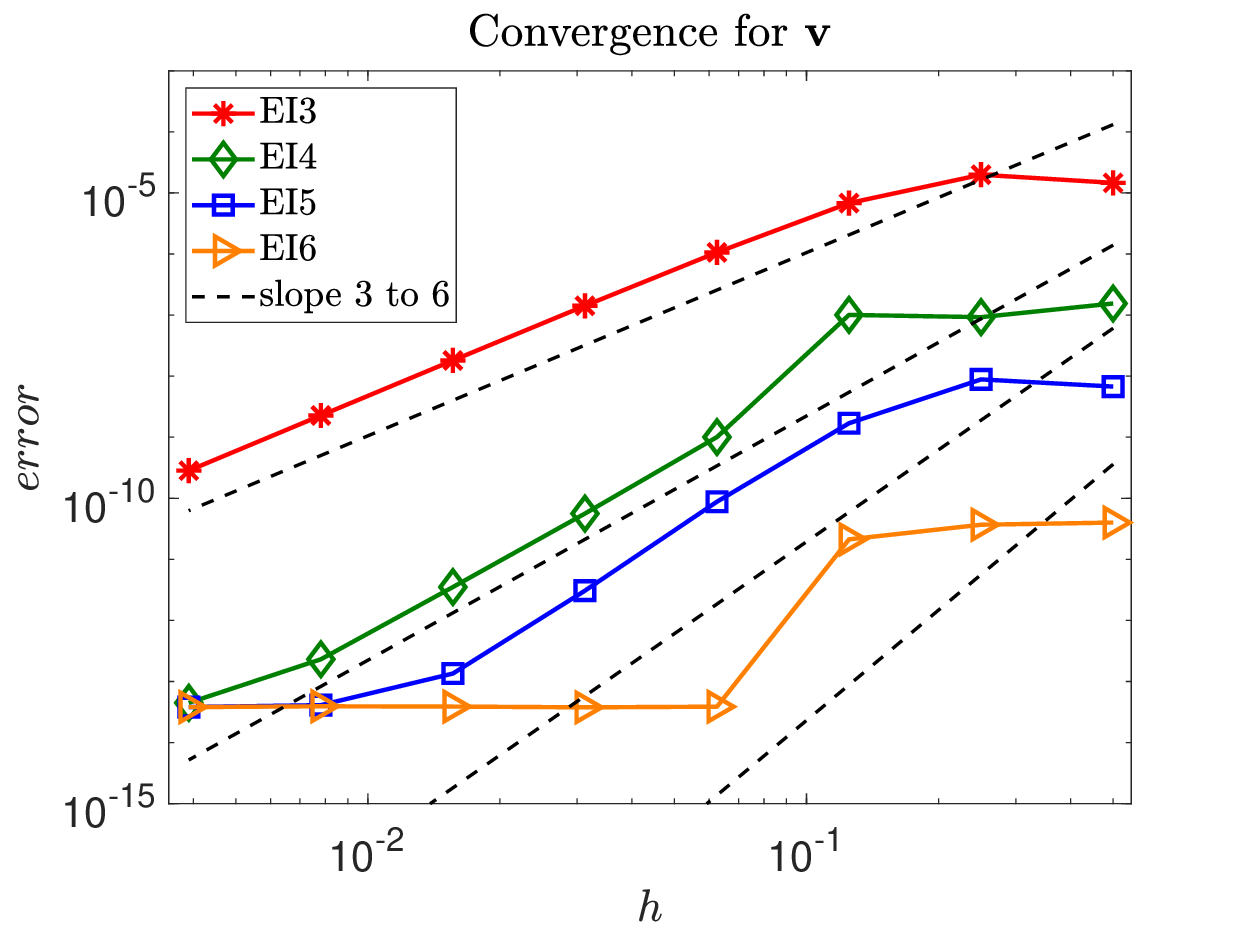}
    \par
    \includegraphics[height=3.8cm,width=5.8cm]{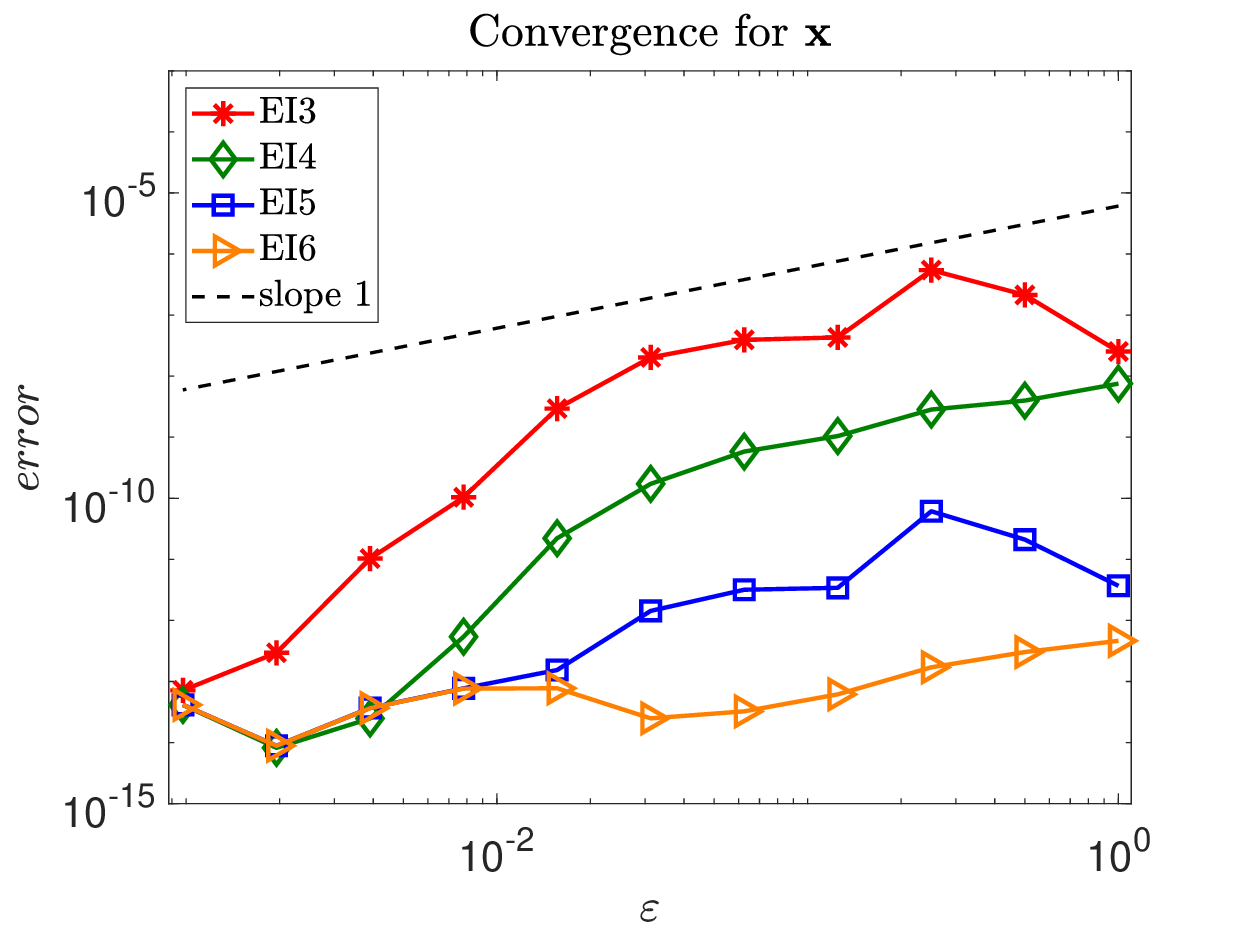}
    \hfill
    \includegraphics[height=3.8cm,width=5.8cm]{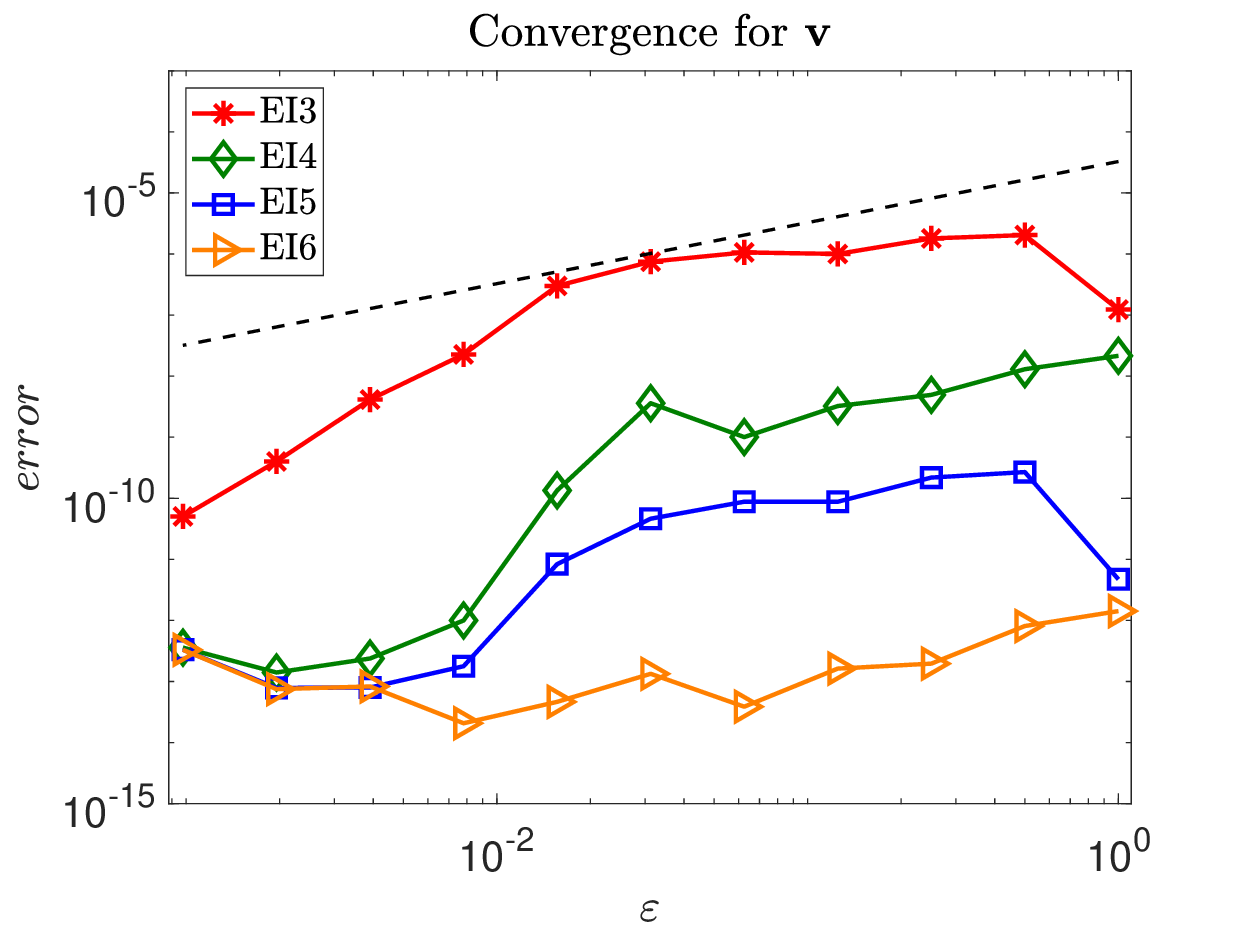}
    \caption{Example 1. The time error $err_{\bx}$ and $err_{\bv}$ about different $h$ (top) and various $\eps$ (bottom) for EI3 to EI6.}
    \label{fig-1-1}
\end{figure}

\begin{figure}[htbp]
    \centering
    \includegraphics[height=3.8cm,width=5.8cm]{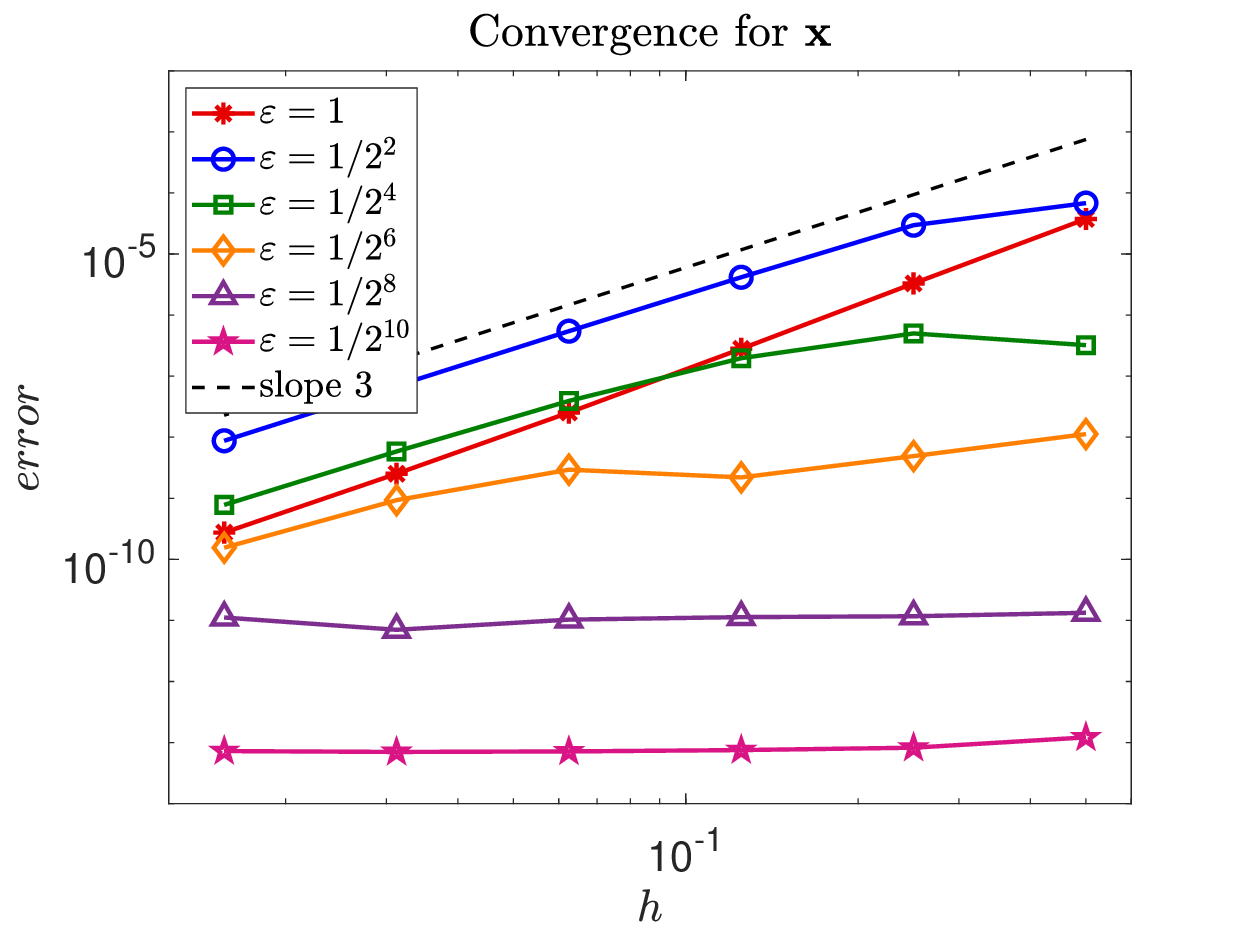}
    \hfill
    \includegraphics[height=3.8cm,width=5.8cm]{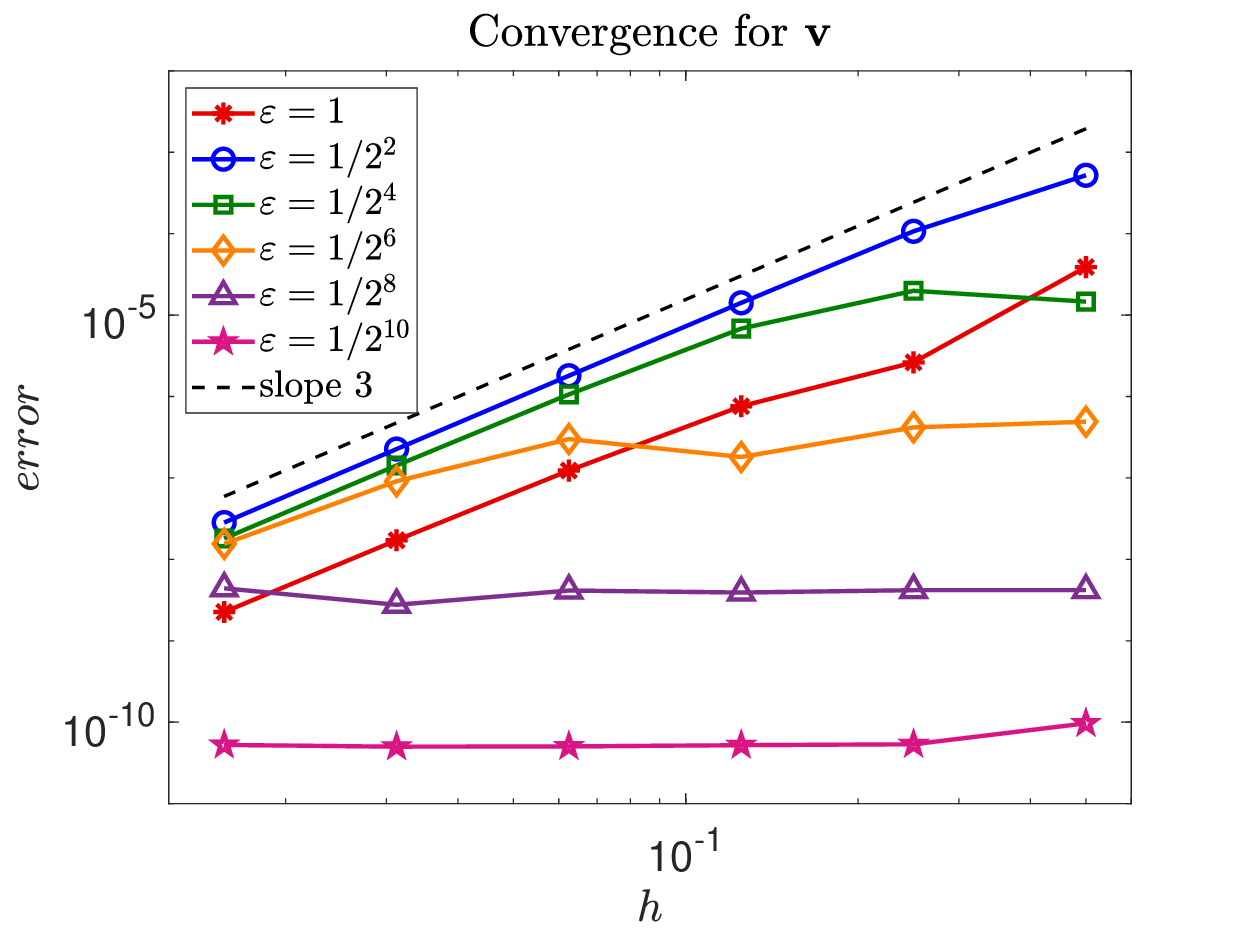}
    \par
    \includegraphics[height=3.8cm,width=5.8cm]{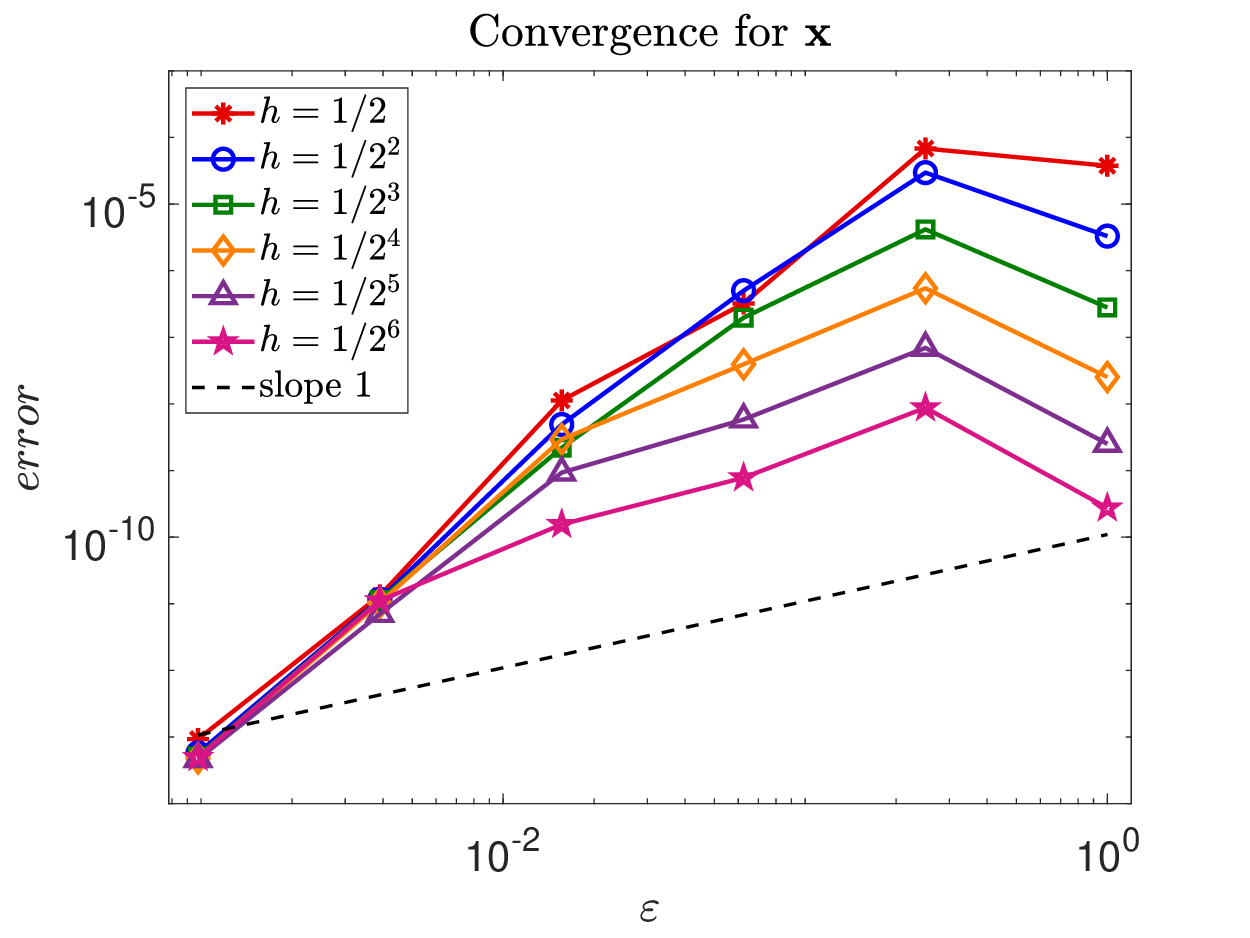}
    \hfill
    \includegraphics[height=3.8cm,width=5.8cm]{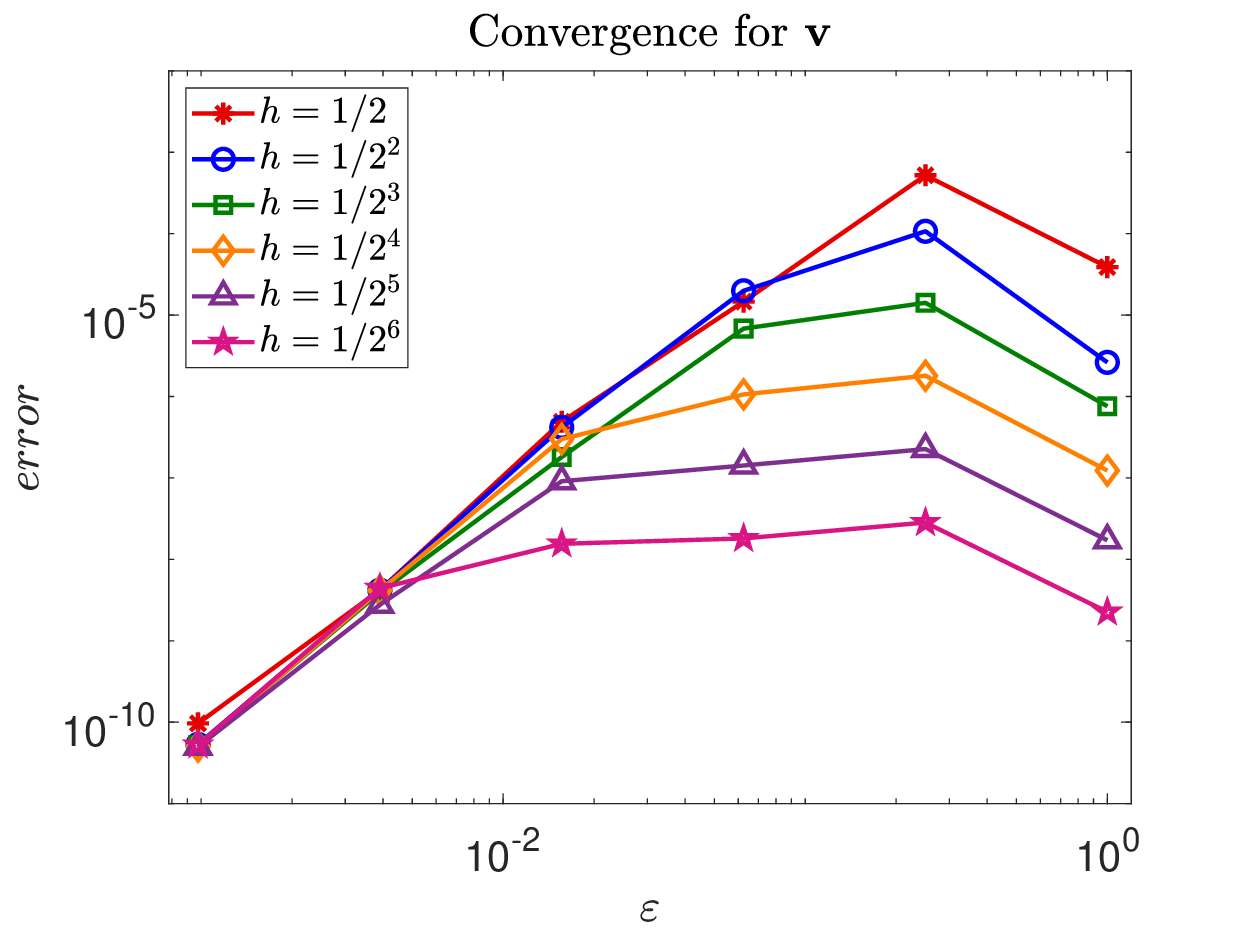}
    \caption{Example 1. The time error $err_{\bx}$ and $err_{\bv}$ about different $\eps$ (top) and various $h$ (bottom) for EI3.}
    \label{fig-1-2}
\end{figure}

\begin{figure}[htbp]
    \centering
    \includegraphics[height=3.8cm,width=5.8cm]{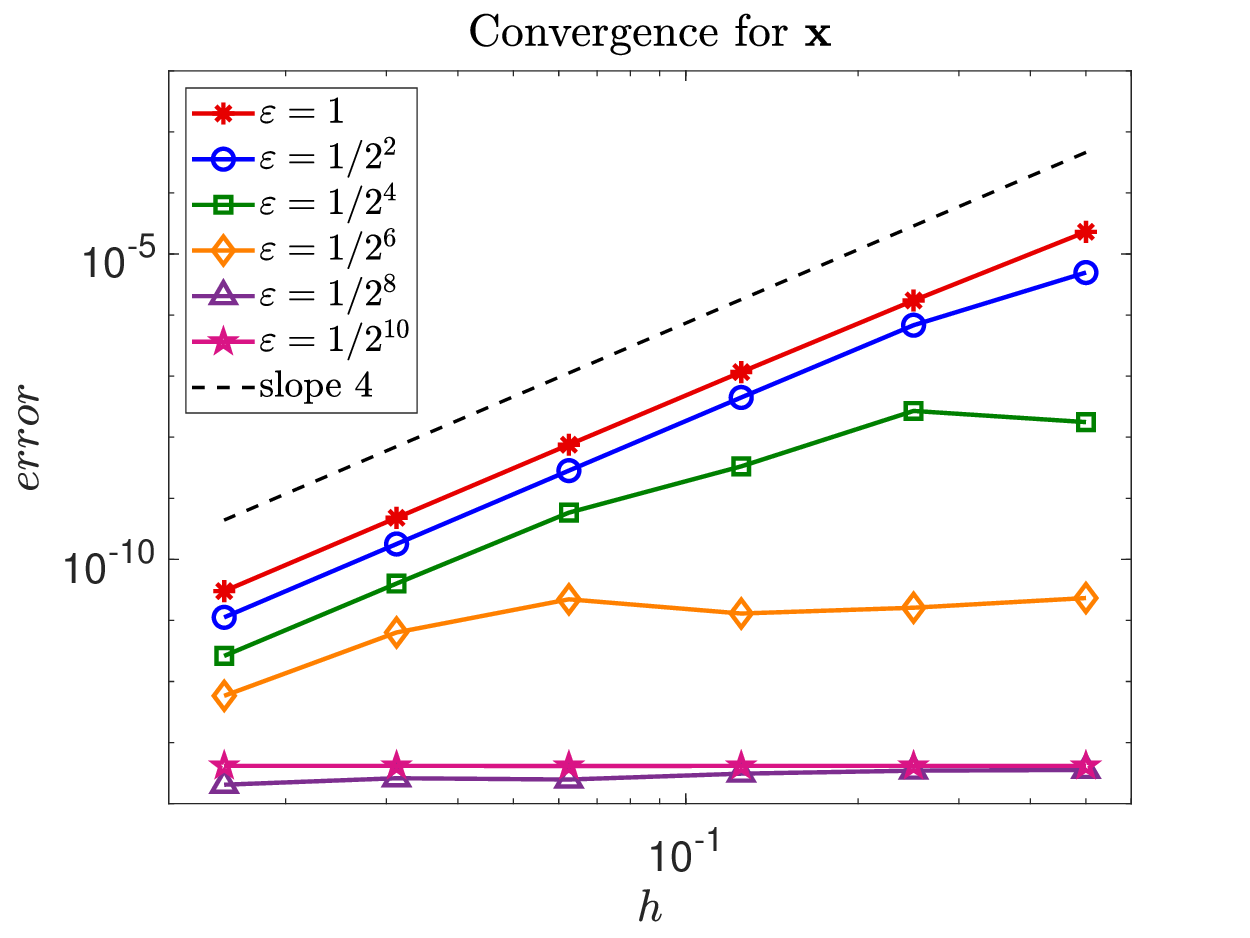}
    \hfill
    \includegraphics[height=3.8cm,width=5.8cm]{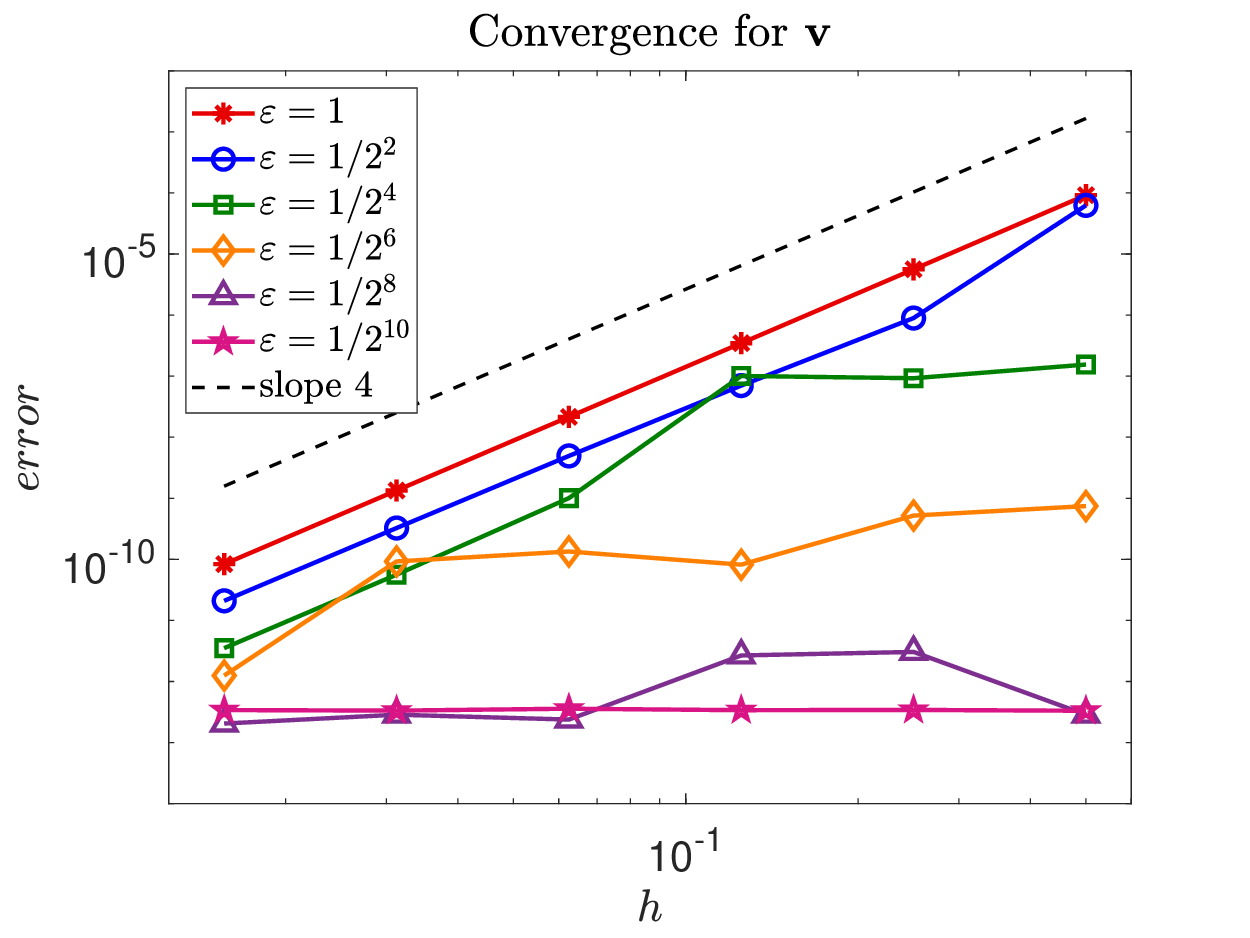}
    \par
    \includegraphics[height=3.8cm,width=5.8cm]{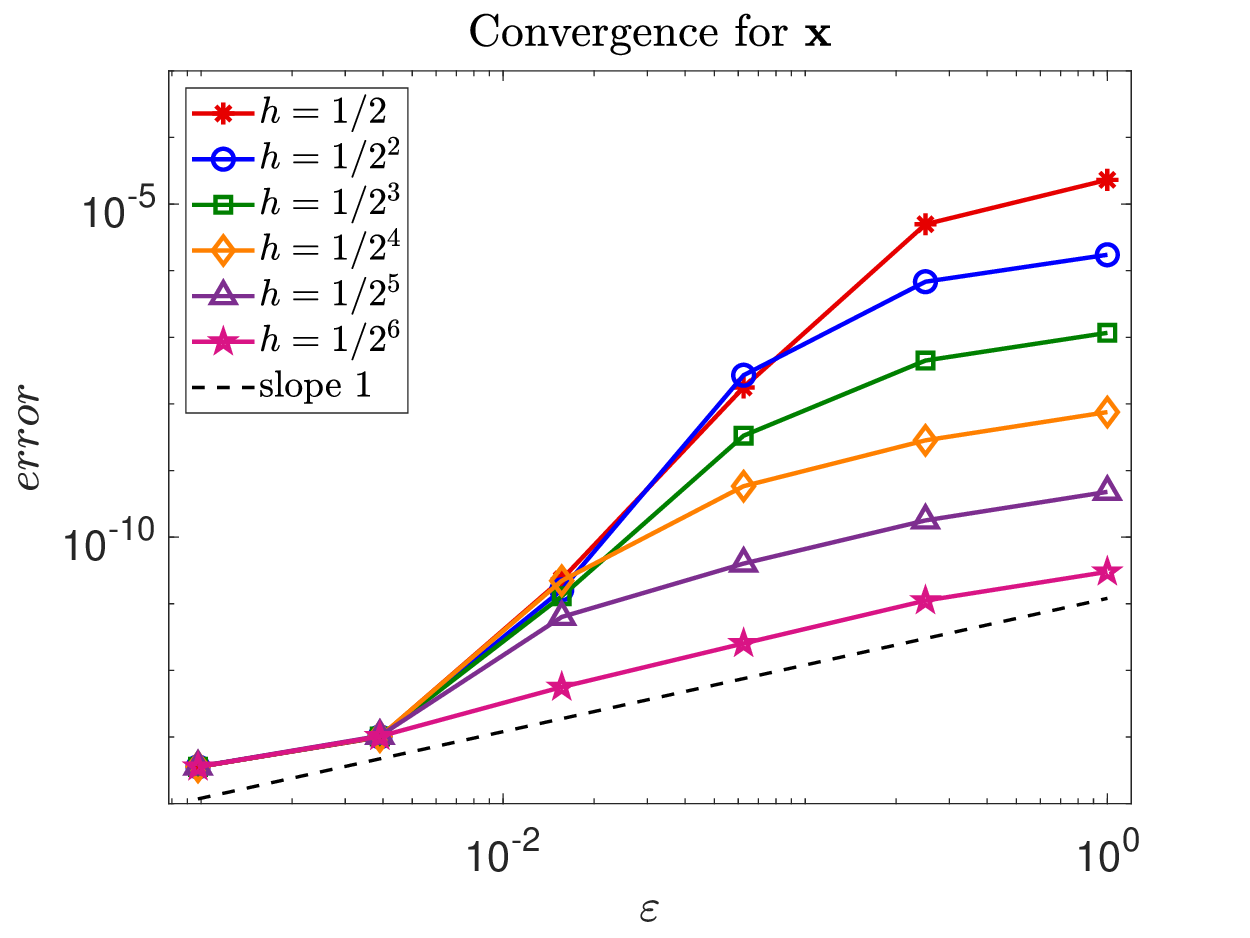}
    \hfill
    \includegraphics[height=3.8cm,width=5.8cm]{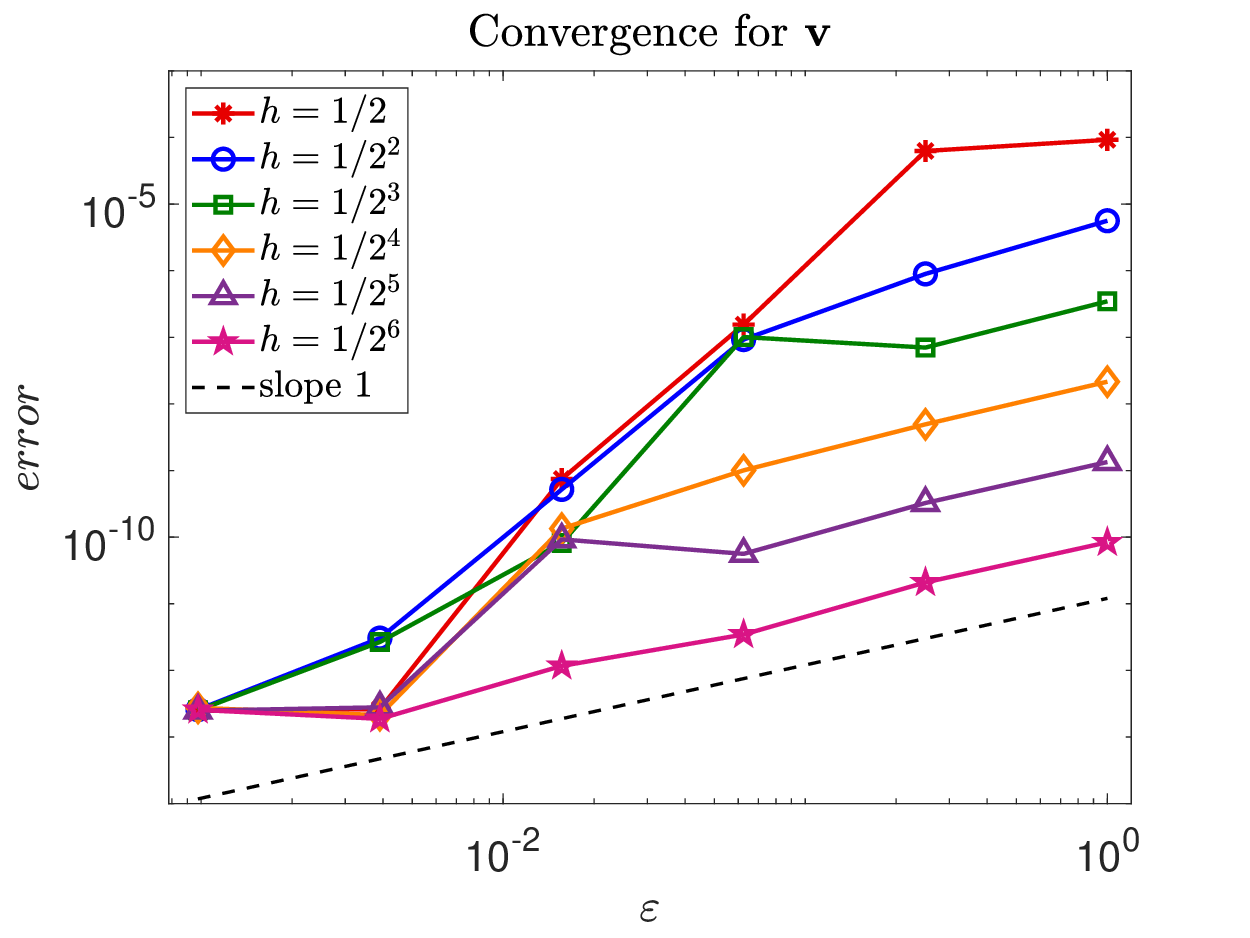}
    \caption{Example 1. The time error $err_{\bx}$ and $err_{\bv}$ about different $\eps$ (top) and various $h$ (bottom) for EI4.}
    \label{fig-1-3}
\end{figure}

\begin{figure}[htbp]
    \centering
    \includegraphics[height=3.8cm,width=5.8cm]{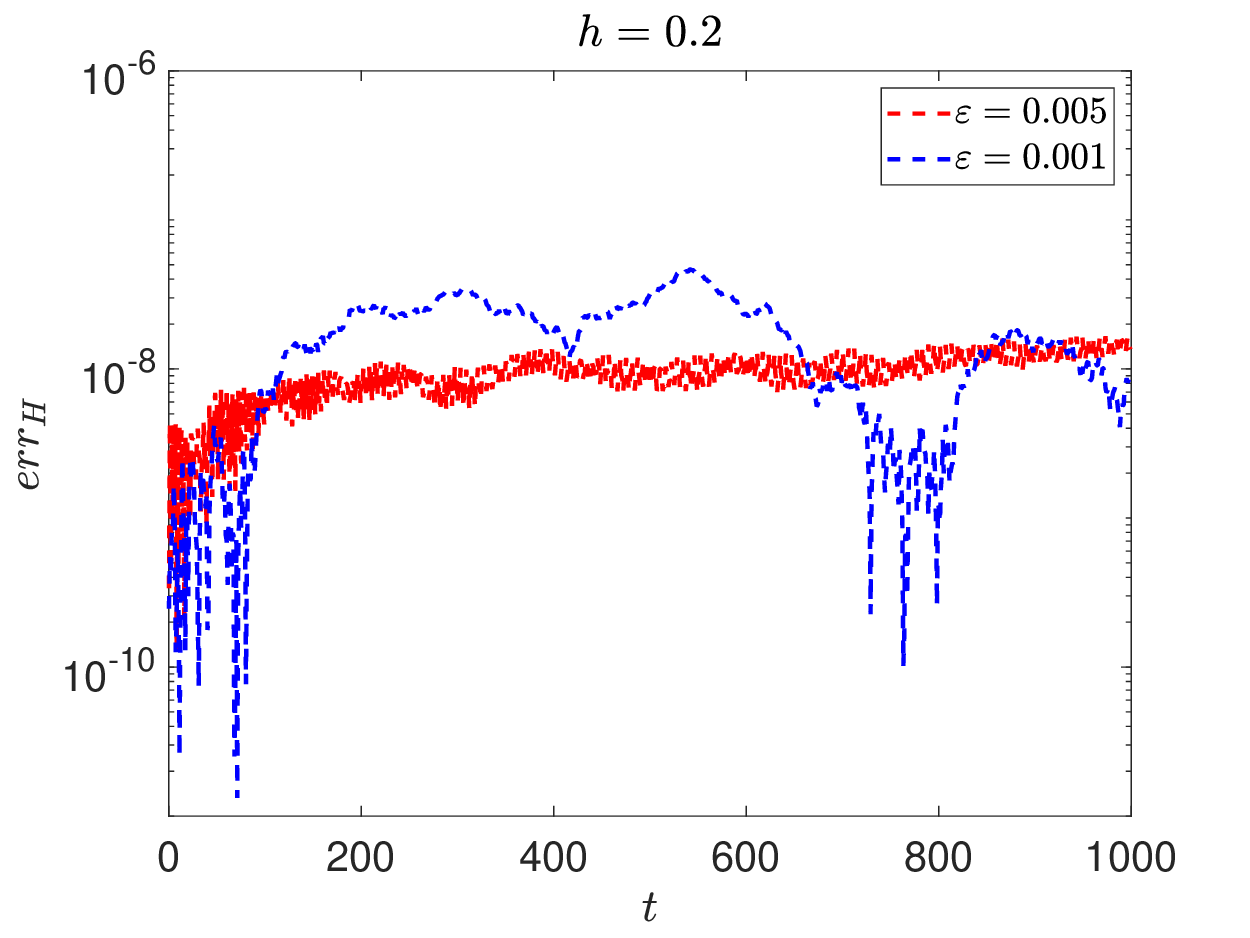}
    \hfill
    \includegraphics[height=3.8cm,width=5.8cm]{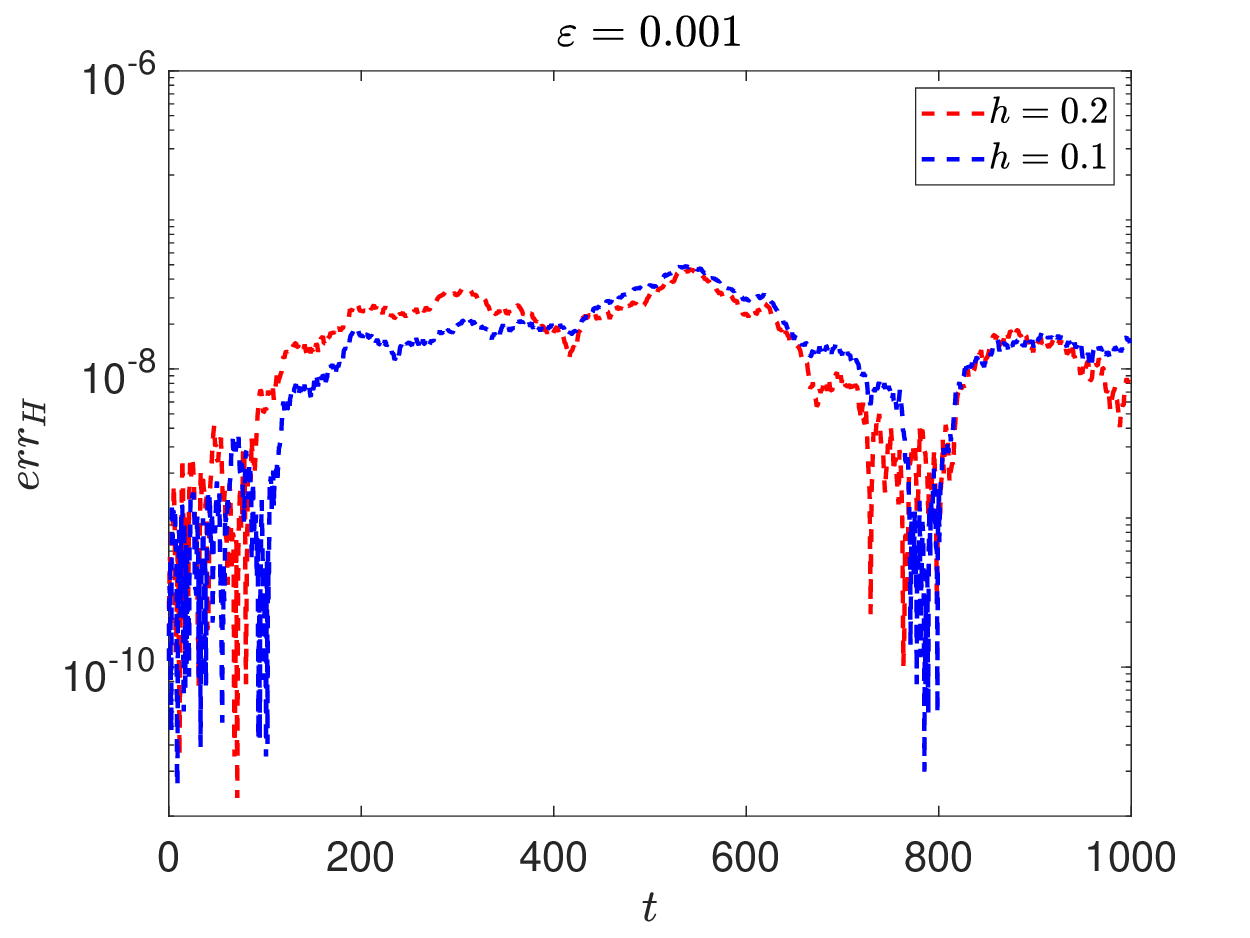}
   \caption{Example 1. The energy error of EI4 under different $\eps$ and $h$.}
   \label{fig-1-4}
\end{figure}

\textbf{Example 2.}
In this example, we consider a maximal ordering scaling magnetic field given by $b(\bx)=1+\sin(x_{1})\sin(x_{2})/2$ and an electric field $E(\bx)=-\nabla U(\bx)$ with the potential $U(\bx)=1/\sqrt{x_{1}^2+x_{2}^{2}}$. The initial data are taken from Example 1. Figures \ref{fig-2-1} and \ref{fig-2-2} plot the temporal errors of EI3 and EI4 up to $T=1$ across a range of step sizes $h$ and values of $\eps$. For a given $\eps$, the errors in both $\bx$ and $\bv$ exhibit third- and fourth-order uniform accuracy, respectively.  For a fixed step size $h$, EI3 exhibits a position error of $\mathcal{O}(\eps)$ and a velocity error of $\mathcal{O}(1)$, which confirms the theoretical results. For EI4, both the position and velocity errors are $\mathcal{O}(\eps)$, with the velocity error surpassing the theoretical estimate. In both examples above, the velocity error exhibits a better numerical performance than the theoretical prediction. This aspect will be a point of our future research.

For comparison, the problem is also solved using the third-order Runge–Kutta method (RK3) and a five-stage, fourth-order exponential Runge–Kutta method (ERK4) (\cite{HO1}). Figure \ref{fig-2-3} shows that the errors of RK3 and ERK4 increase as $\eps$ decreases, whereas EI3 and EI4 maintain uniformly high accuracy. Furthermore, Figure \ref{fig-2-4} examines the efficiency of these methods in the highly oscillatory regime with $\eps=1/2^{10}$ and $\eps=1/2^{11}$, demonstrating that EI3 and EI4 yield smaller errors for the same CPU time and thus exhibit superior efficiency. The energy error of the EI4 solution is plotted in Figure \ref{fig-2-5}, confirming its excellent long-term near-conservation over the time interval $[0,1000]$.

\begin{figure}[htbp]
    \centering
    \includegraphics[height=3.8cm,width=5.8cm]{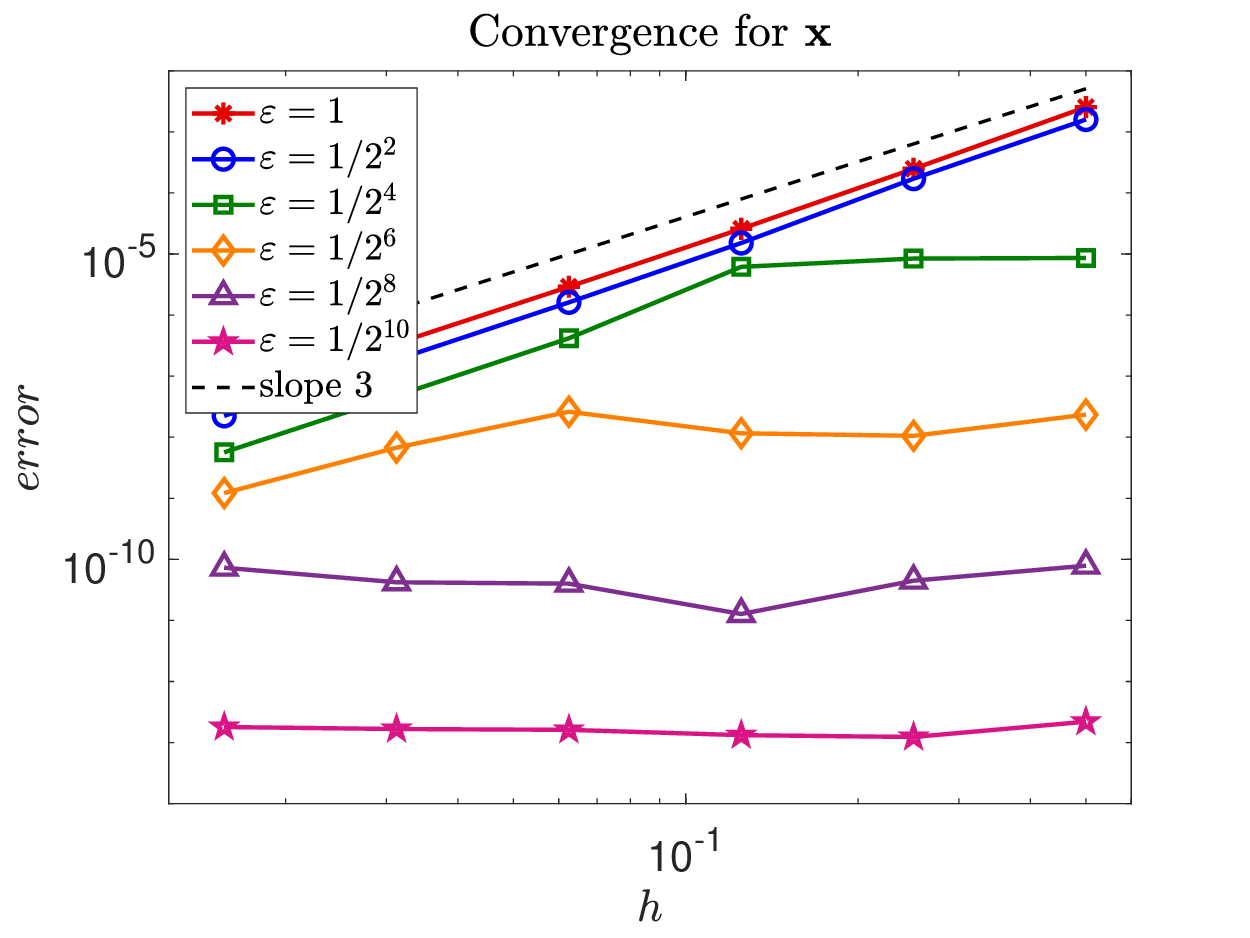}
    \hfill
    \includegraphics[height=3.8cm,width=5.8cm]{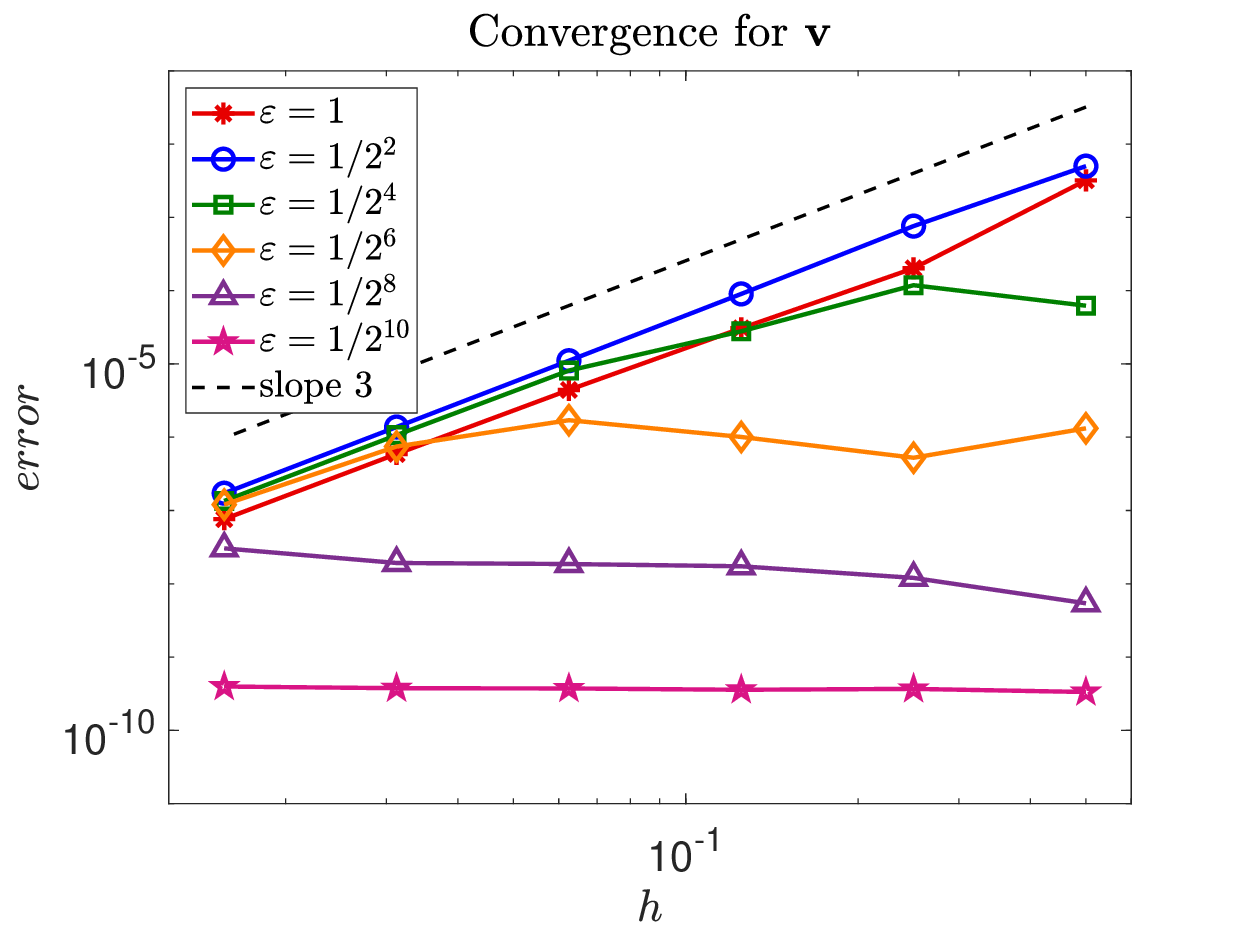}
    \par
    \includegraphics[height=3.8cm,width=5.8cm]{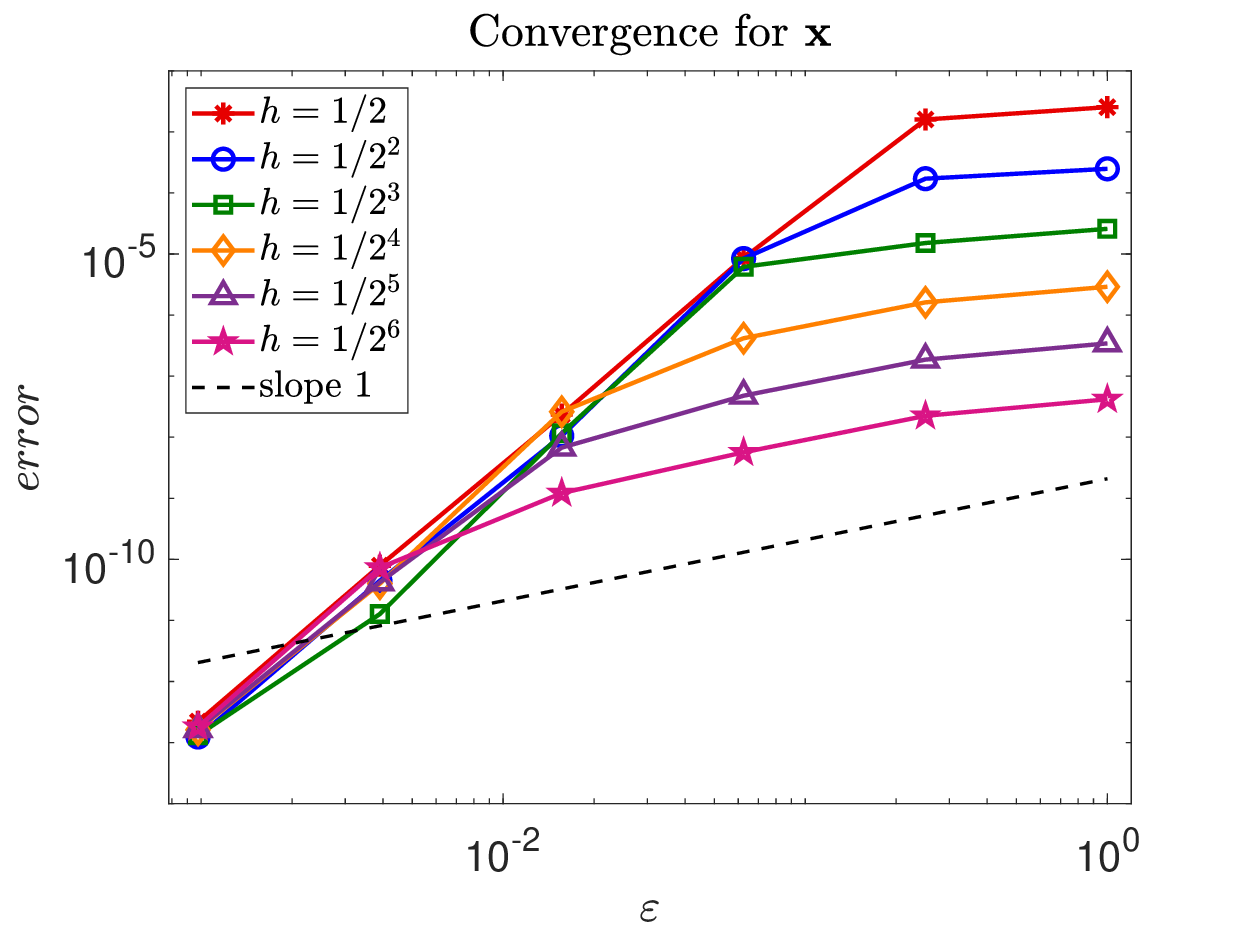}
    \hfill
    \includegraphics[height=3.8cm,width=5.8cm]{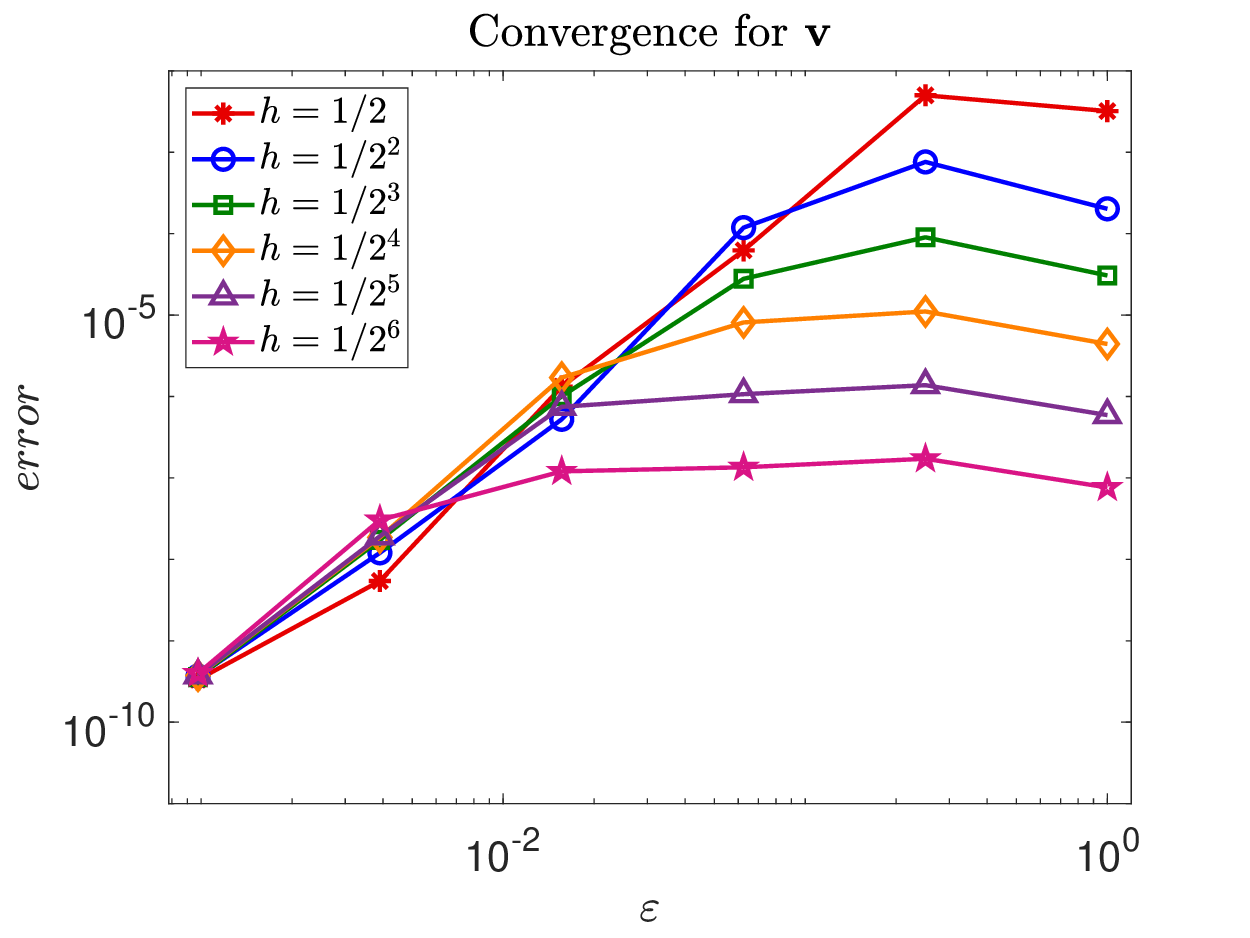}
    \caption{Example 2. The time error $err_{\bx}$ and $err_{\bv}$ about different $\eps$ (top) and various $h$ (bottom) for EI3.}
    \label{fig-2-1}
\end{figure}

\begin{figure}[htbp]
    \centering
    \includegraphics[height=3.8cm,width=5.8cm]{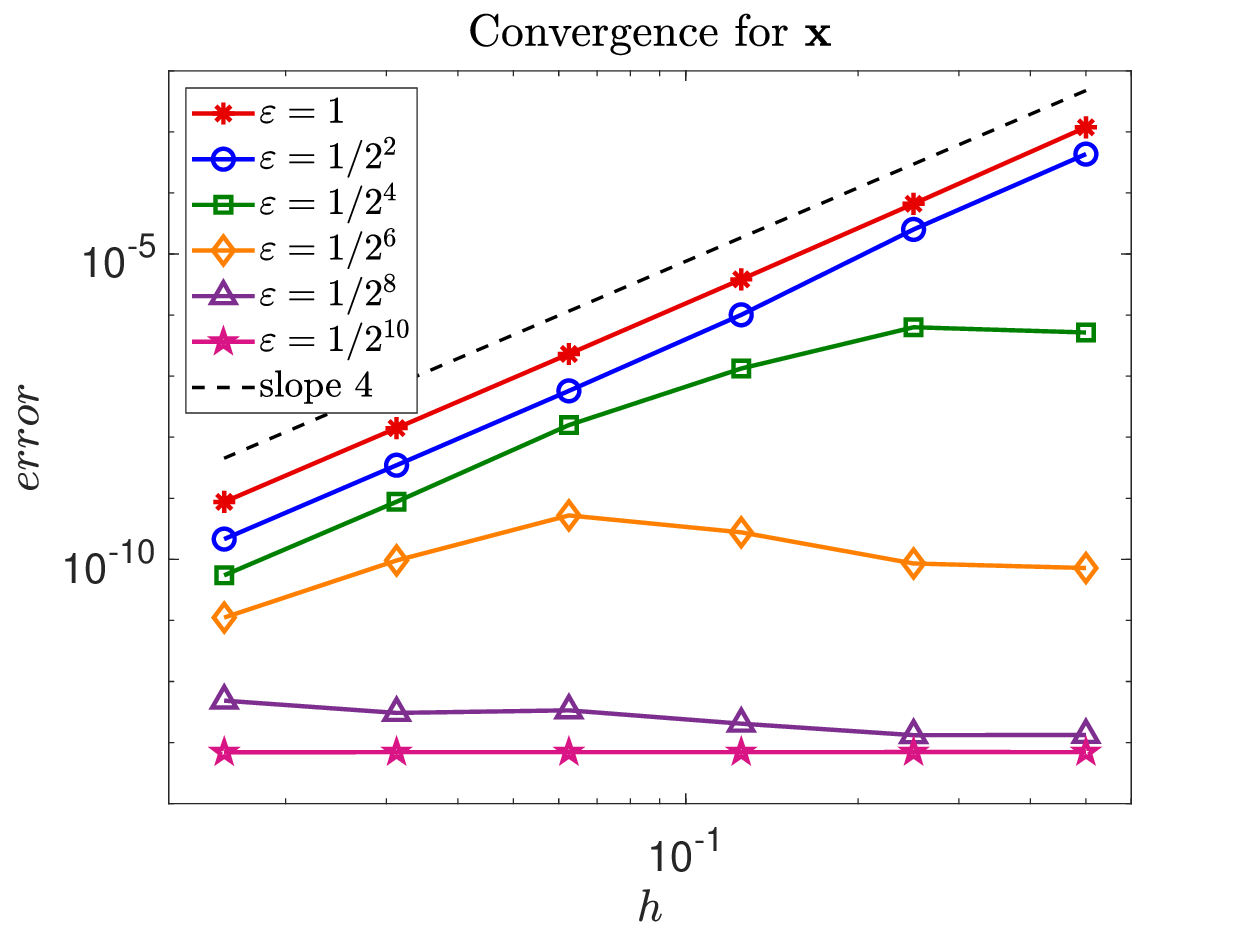}
    \hfill
    \includegraphics[height=3.8cm,width=5.8cm]{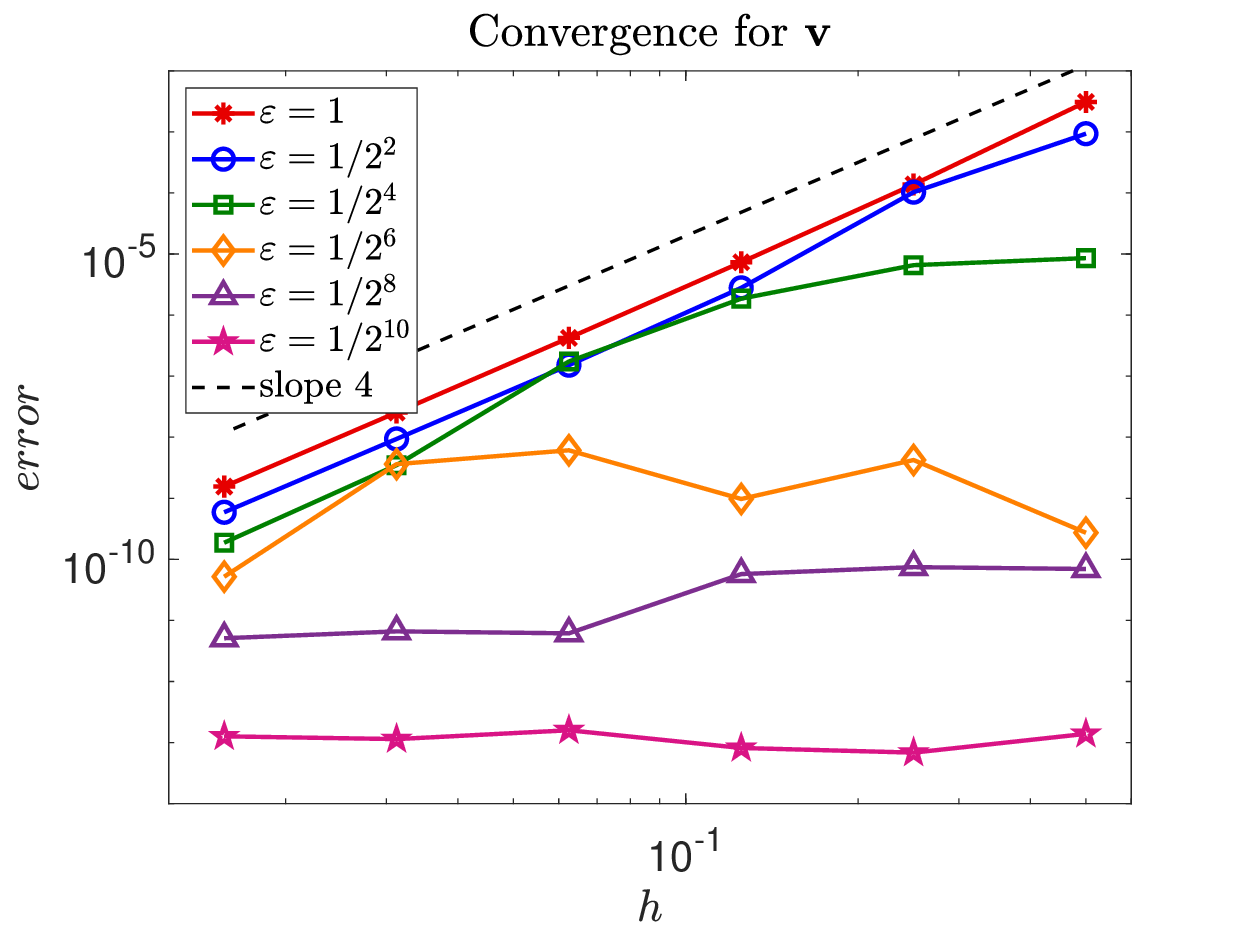}
    \par
    \includegraphics[height=3.8cm,width=5.8cm]{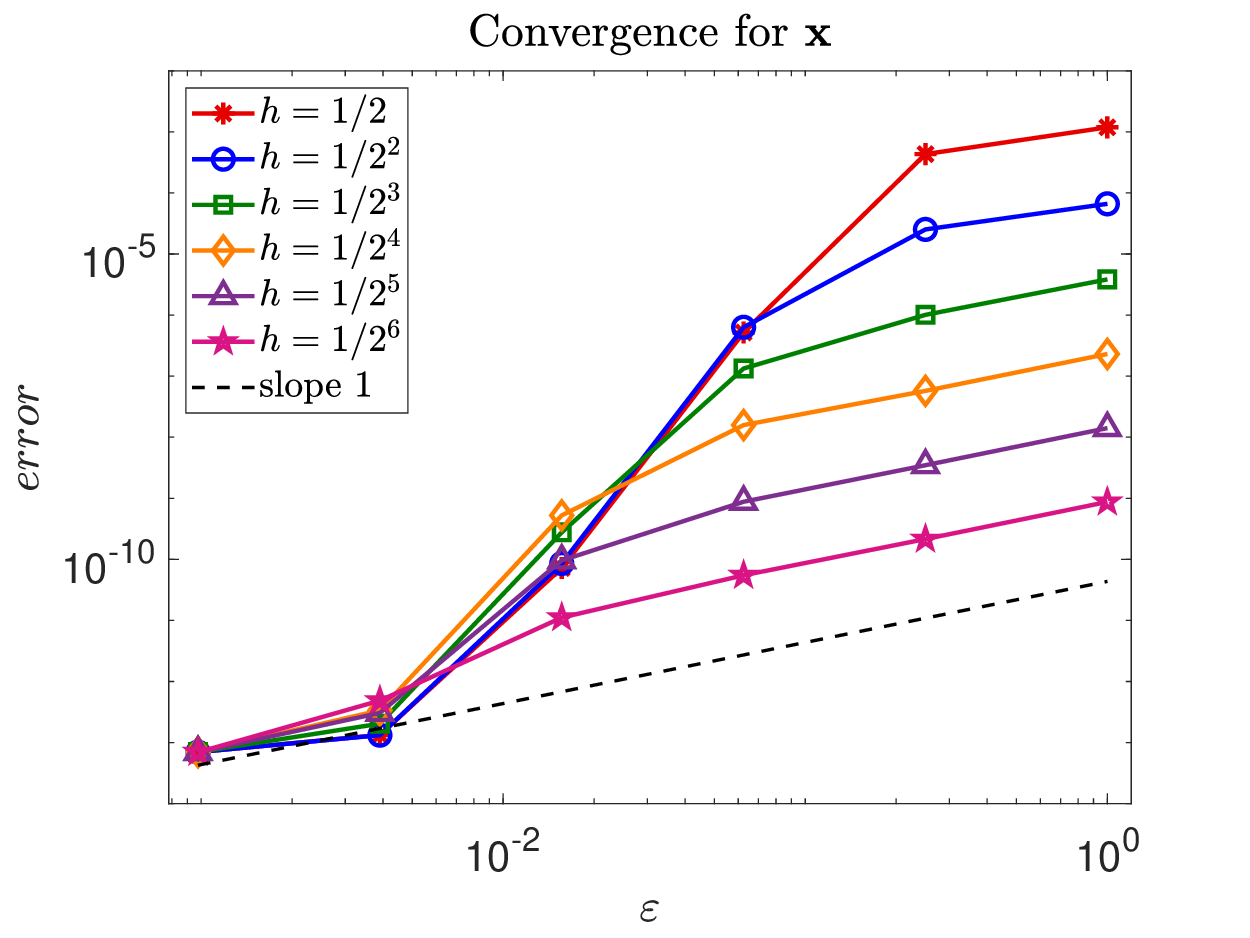}
    \hfill
    \includegraphics[height=3.8cm,width=5.8cm]{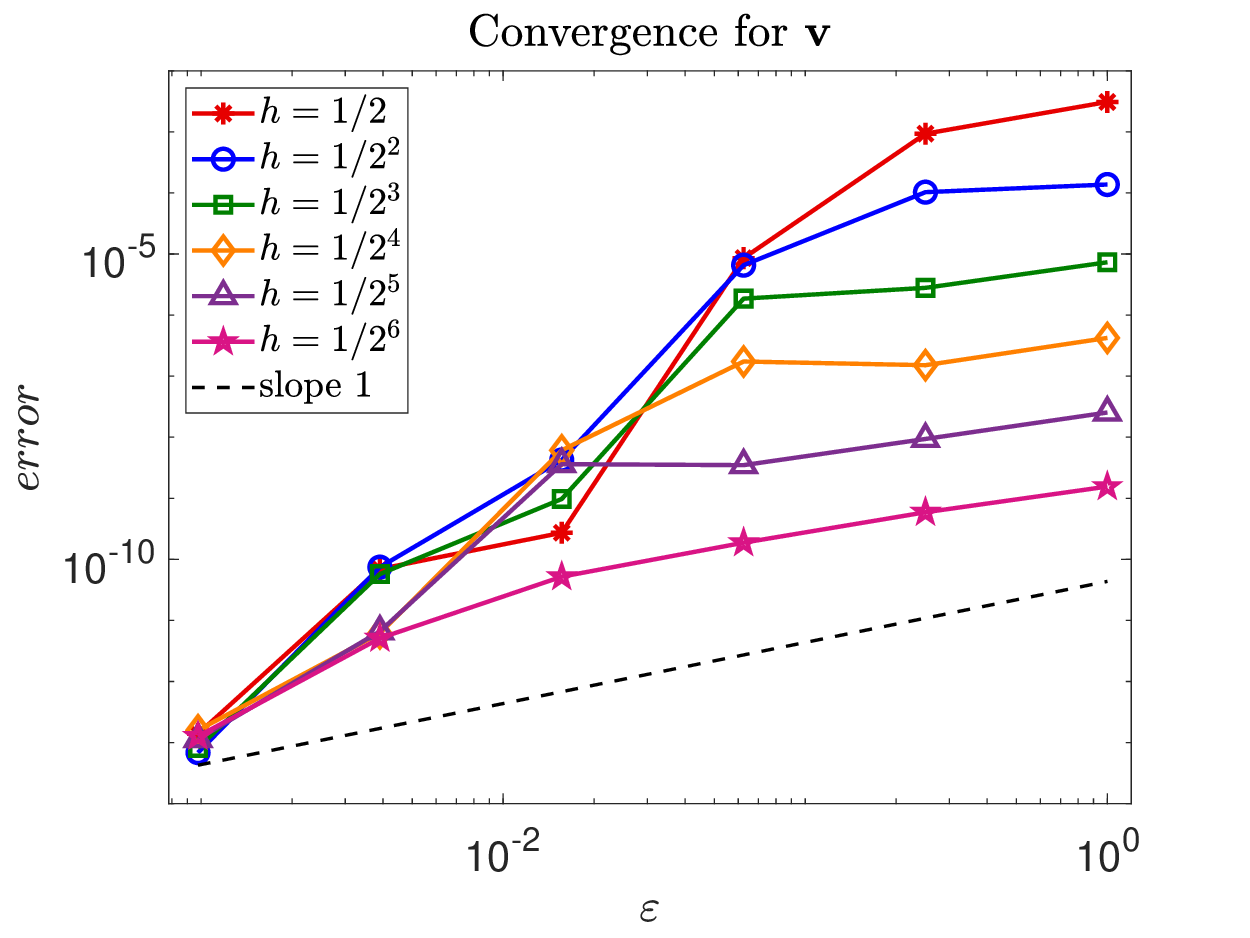}
    \caption{Example 2. The time error $err_{\bx}$ and $err_{\bv}$ about different $\eps$ (top) and various $h$ (bottom) for EI4.}
    \label{fig-2-2}
\end{figure}

\begin{figure}[htbp]
    \centering
    \includegraphics[height=3.8cm,width=5.8cm]{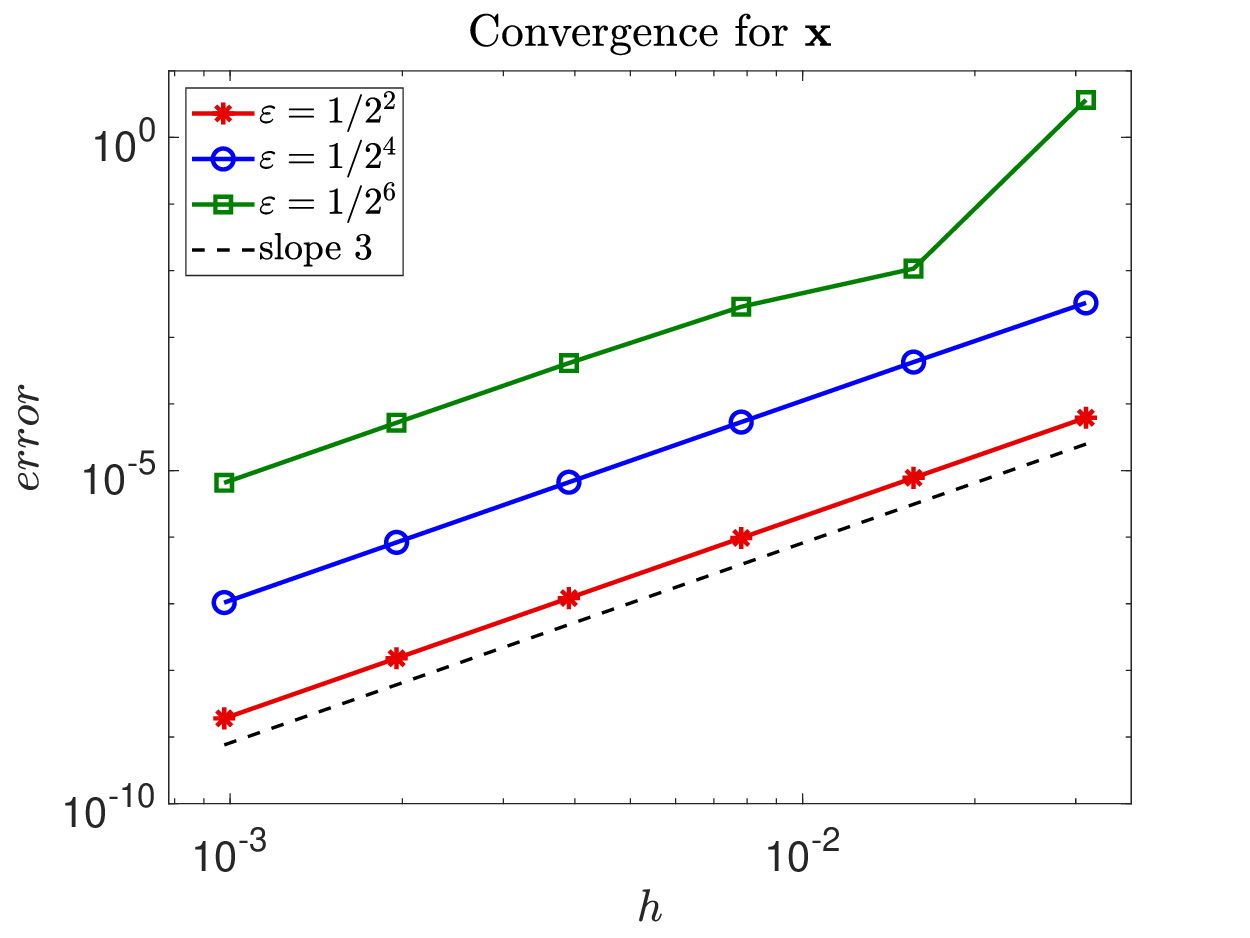}
    \hfill
    \includegraphics[height=3.8cm,width=5.8cm]{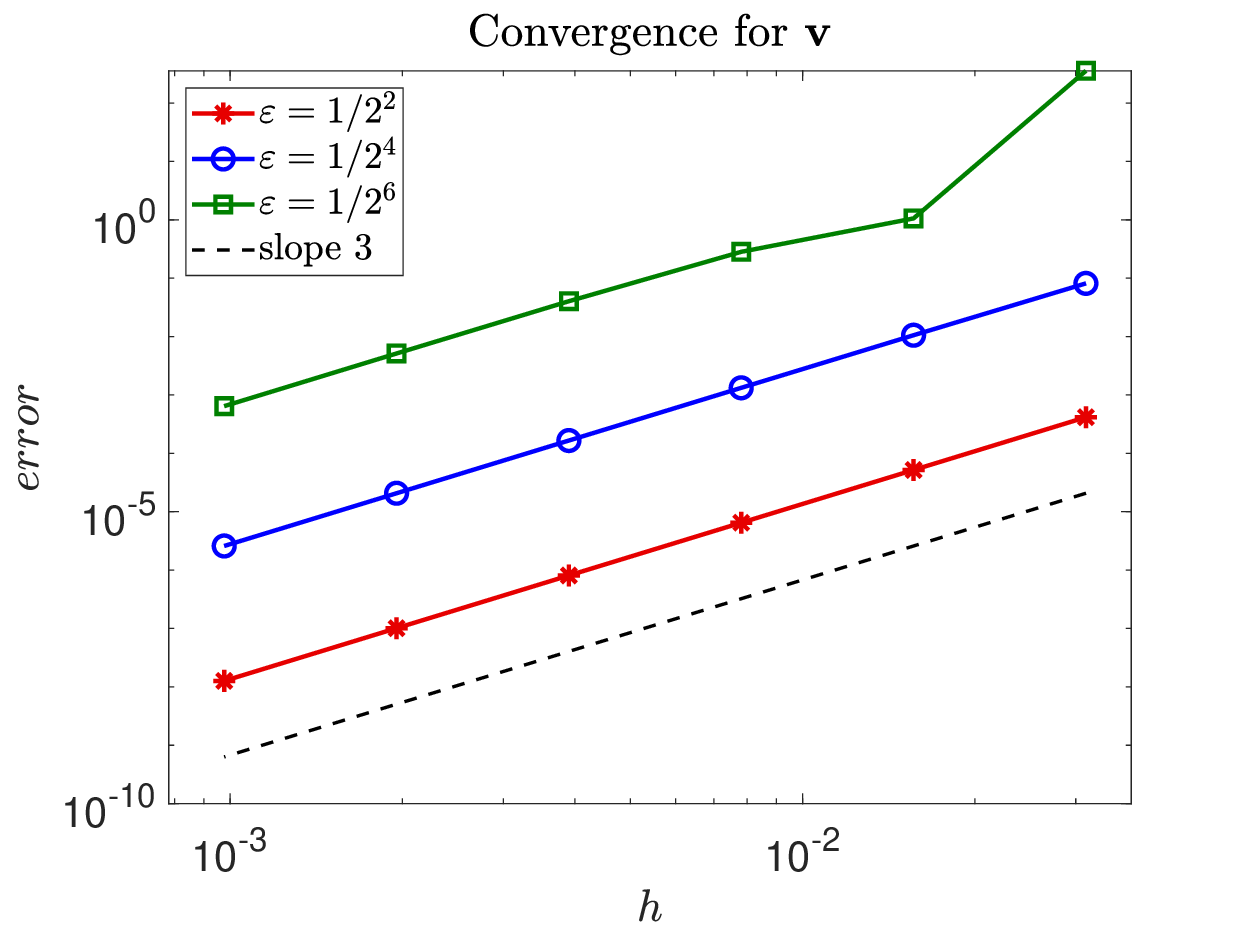}
    \par
    \includegraphics[height=3.8cm,width=5.8cm]{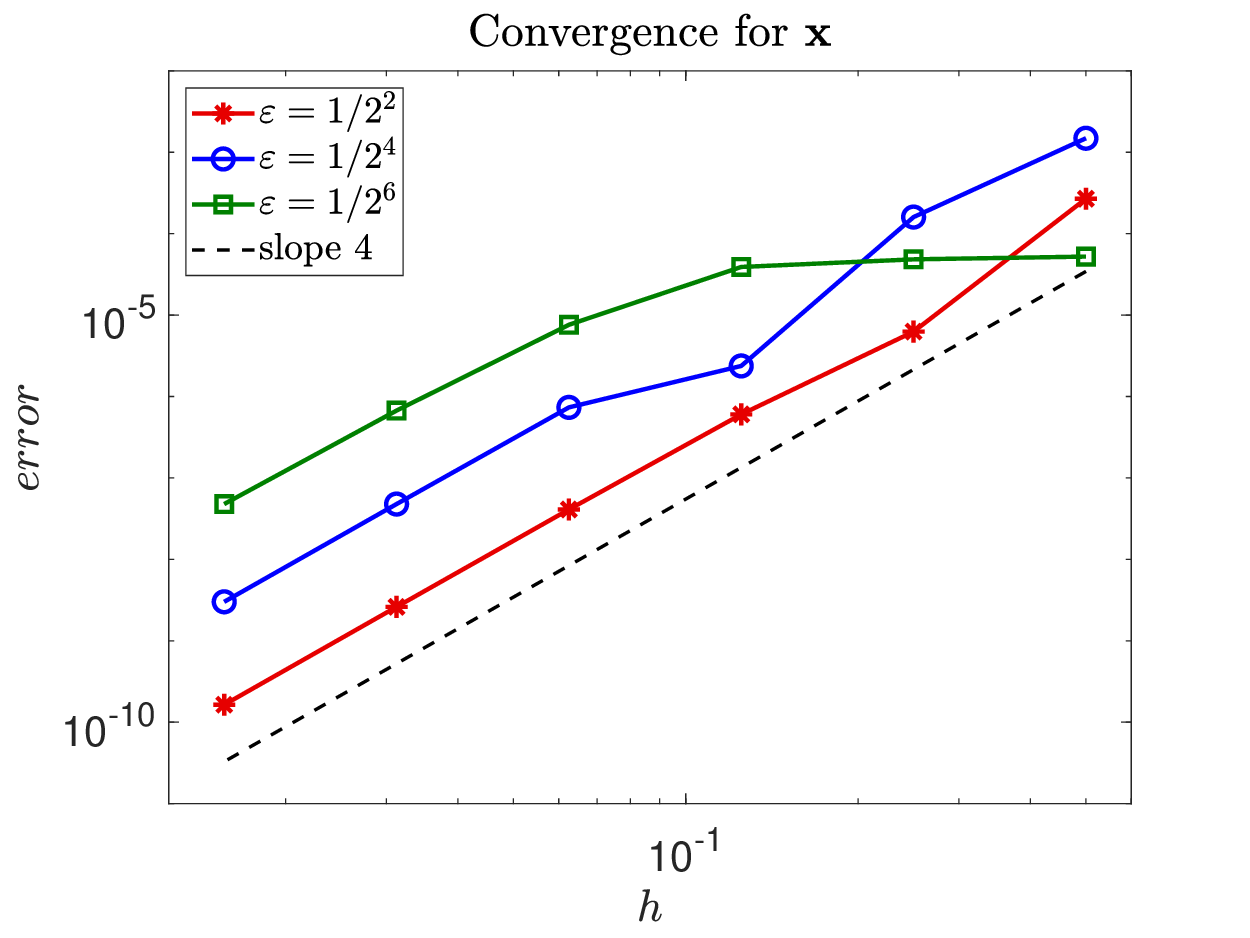}
    \hfill
    \includegraphics[height=3.8cm,width=5.8cm]{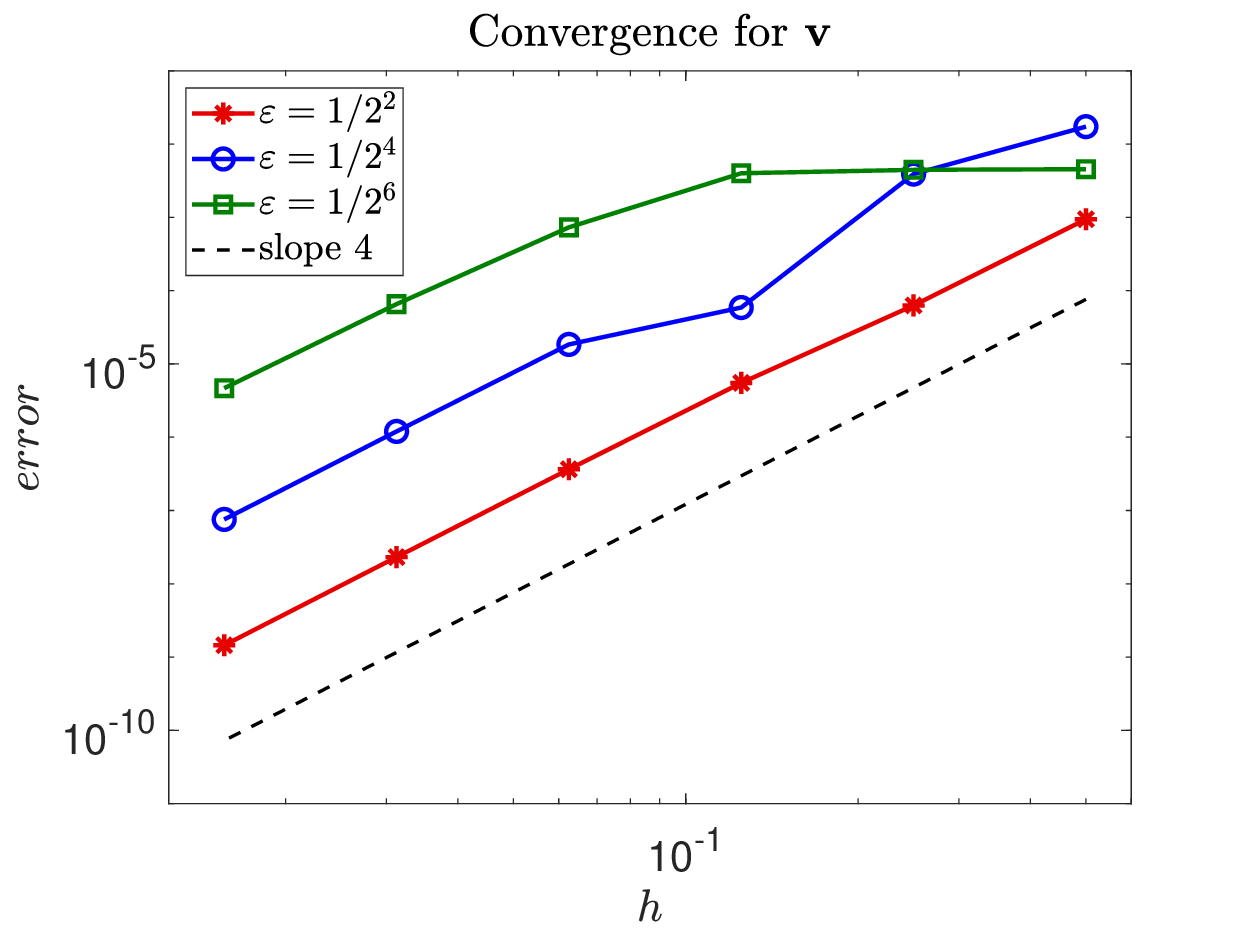}
    \caption{Example 2. The time error $err_{\bx}$ and $err_{\bv}$ about different $h$ for RK3 (top) and ERK4 (bottom).}
    \label{fig-2-3}
\end{figure}

\begin{figure}[htbp]
    \centering
    \includegraphics[height=3.8cm,width=5.8cm]{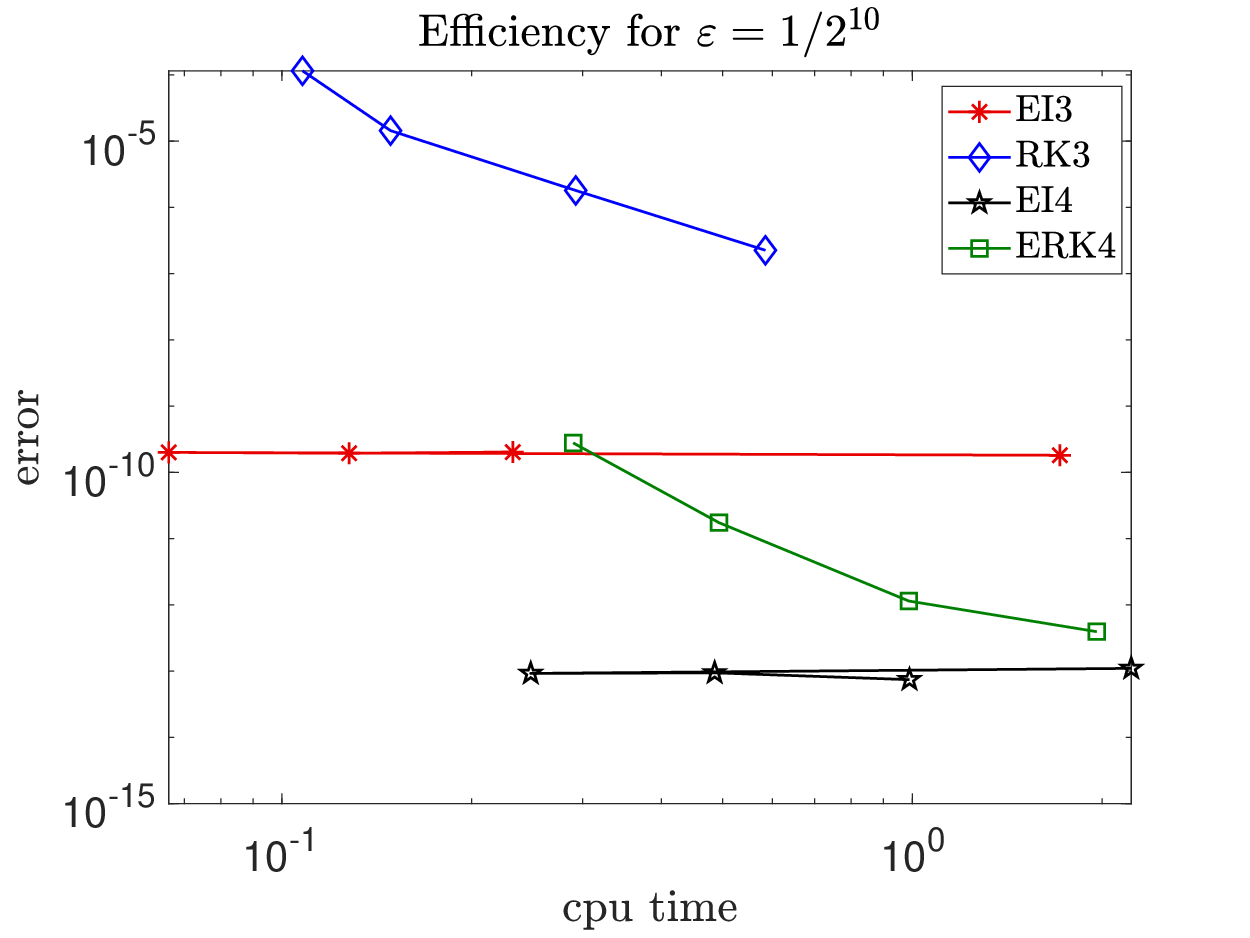}
    \hfill
    \includegraphics[height=3.8cm,width=5.8cm]{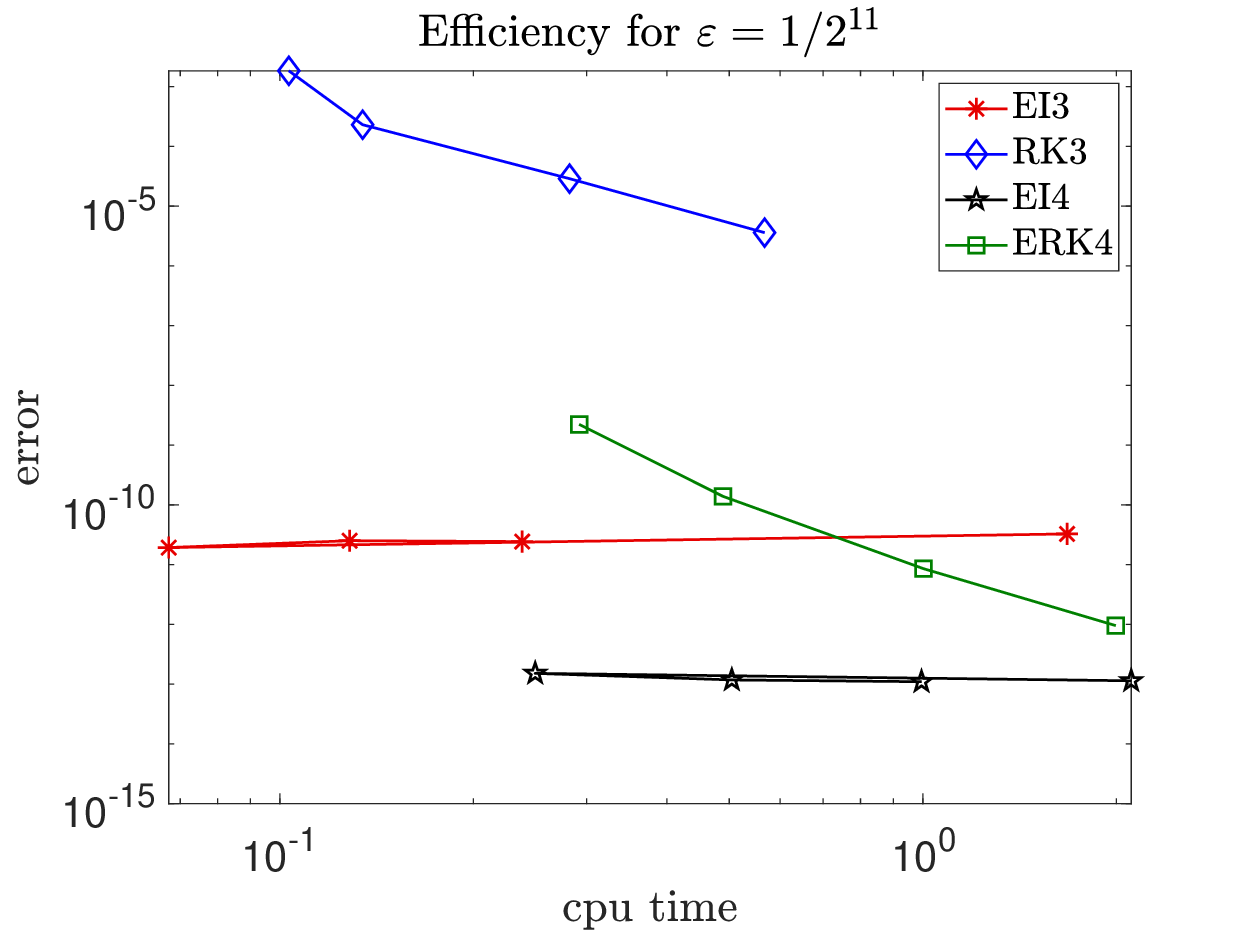}
    \caption{Example 2. The error of the four schemes versus the cpu time.}
    \label{fig-2-4}
\end{figure}

\begin{figure}[htbp]
    \centering
    \includegraphics[height=3.8cm,width=5.8cm]{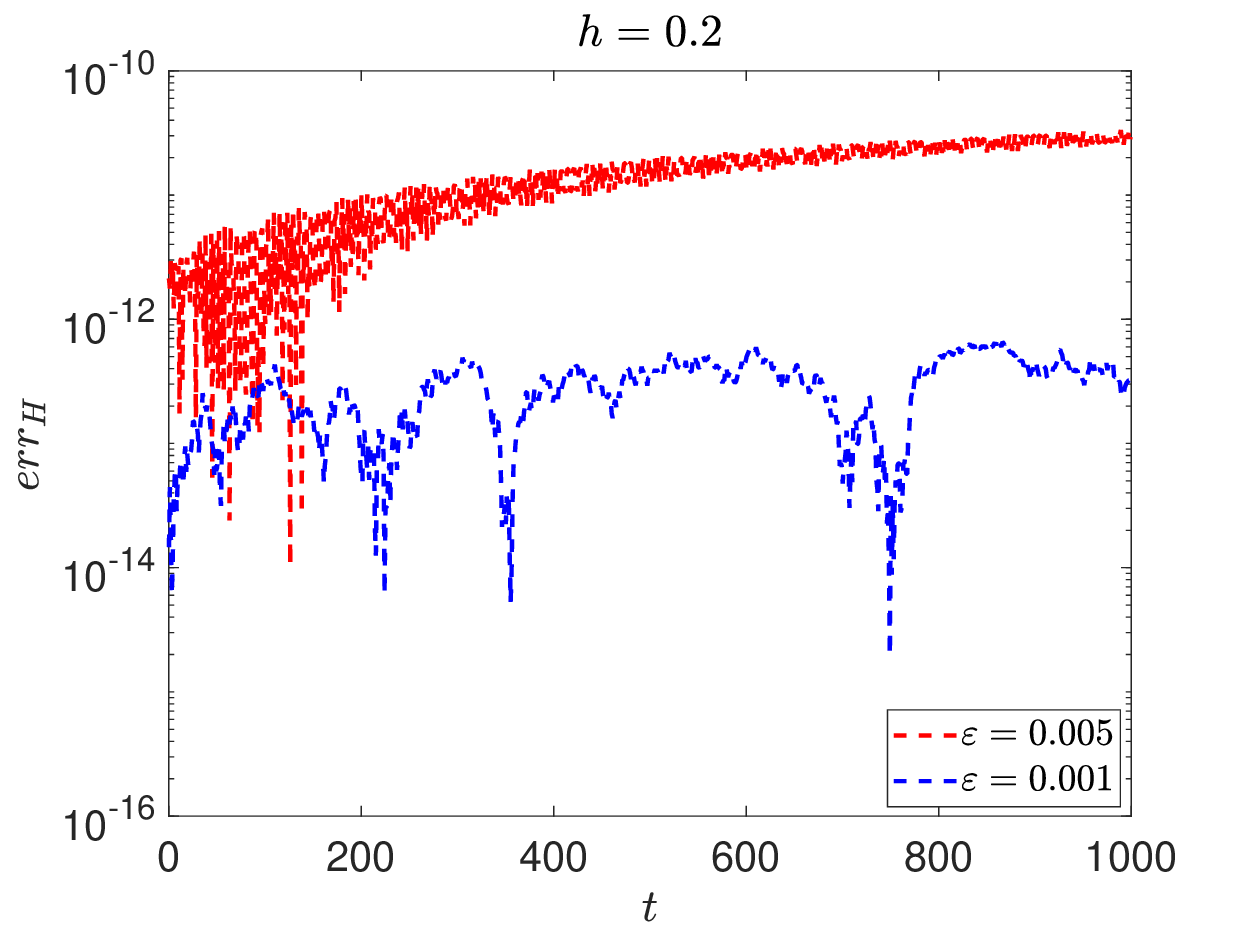}
    \hfill
    \includegraphics[height=3.8cm,width=5.8cm]{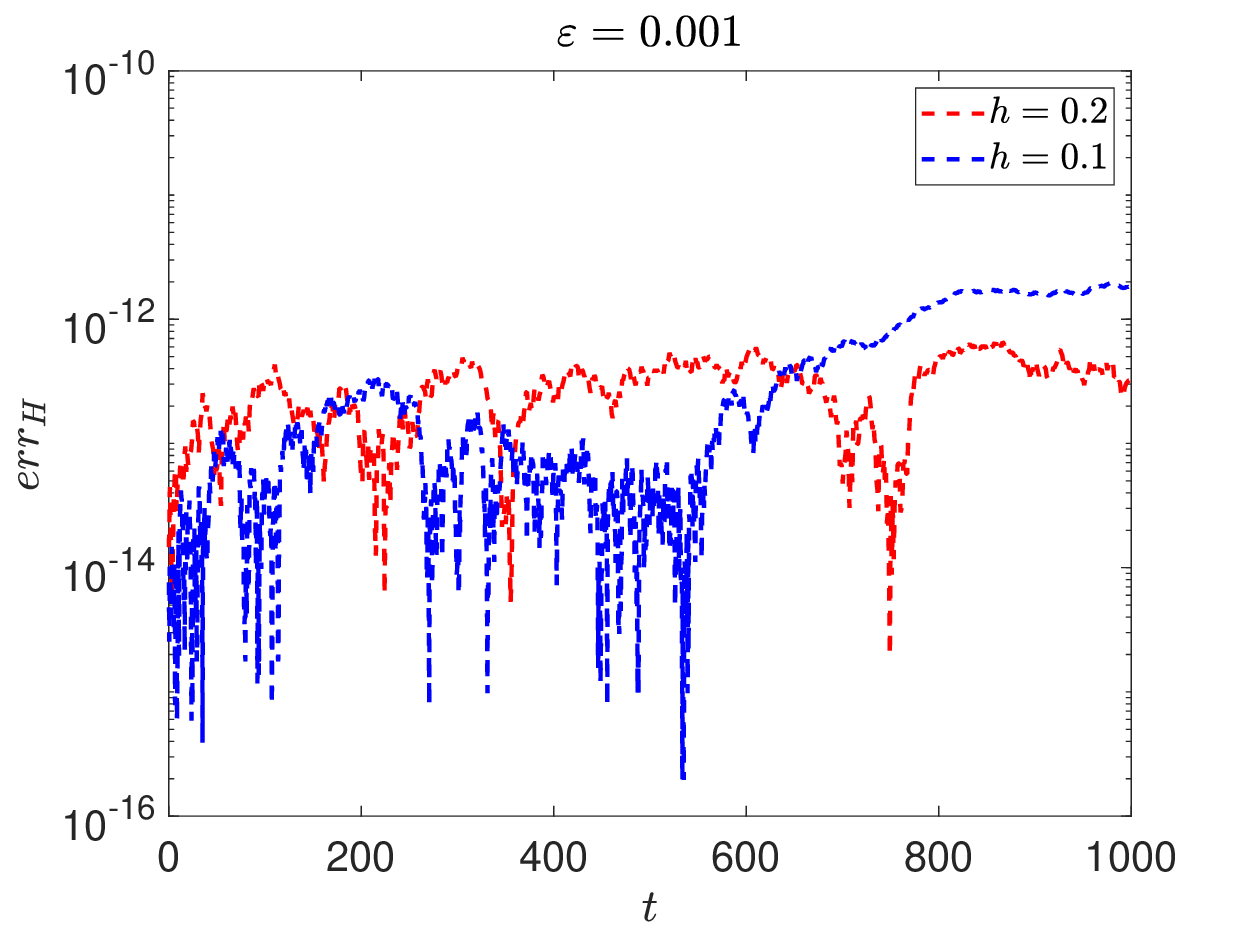}
    \caption{Example 2. The energy error of EI3 under different $\eps$ and $h$.}
    \label{fig-2-5}
\end{figure}

\section{Application to the relativistic CPD}\label{sec-5}

\subsection{Numerical scheme and the convergence} 
The CPD can be classified into relativistic and non-relativistic regimes. Near the speed of light, relativistic models provide more accurate electron trajectories than their non-relativistic counterparts. This section applies the proposed local linear extension exponential integrator to the relativistic CPD under a maximal ordering scaling magnetic field.
The motion of the relativistic CPD is typically described by
 (\cite{HLS1,ZLWLT})
\begin{equation}\label{equ-18-3}
\begin{aligned}
&\frac{d\bx(\bar{t})}{d\bar{t}}=\frac{\bu(\bar{t})}{\bar{\gamma}}, \quad \bx(0)=\bx_{0}, \\
&\frac{d\bu(\bar{t})}{d\bar{t}}=\frac{\bu(\bar{t})}{\bar{\gamma}}\times \mathfrak{B}(\bx)+E(\bx(\bar{t})), \quad \bu(0)=\bu_{0},
\end{aligned}
\end{equation}
with $\mathfrak{B}(\bx)=\dfrac{B(\eps\bx)}{\eps}$ represents a strong maximal ordering scaling magnetic field, $\bx(\bar{t})\in\mathbb{R}^3$  is the position, $\bu(\bar{t})\in\mathbb{R}^3$ is the momentum, and $\bar{\gamma}=\sqrt{1+\abs{\bu}^2}$ denotes the relativistic factor. Physical units are chosen such that the speed of light, the particle mass, and the electric charge are all unity. The electric and magnetic fields are given by $E(\bx)=-\nabla U(\bx)$ and $B(\bx)=(b_{1}(\bx),b_{2}(\bx),b_{3}(\bx))^{\intercal}$, respectively. The energy (Hamiltonian) $$\mathcal{H}(\bar{\gamma},\bx)=\bar{\gamma}+U(\bx)$$ of the relativistic CPD \eqref{equ-18-3} is a conserved quantity. The magnetic field is assumed to be uniformly bounded away from zero, i.e., $\norm{B(\bx)}\geq c_{0}>0$.

Introducing the proper time $\tau$, the system \eqref{equ-18-3} can be expressed in four-dimensional framework as
\begin{equation}\label{equ-4-23-1}
\begin{aligned}
&\dot{\bx}(\tau)=\bu(\tau), \ \dot{\bar{t}}(\tau)=\bar{\gamma}(\tau), \\ &\dot{\bu}(\tau)=\bar{\gamma}(\tau)E(\bx(\tau))+\frac{\widehat{B}(\eps\bx(\tau))}{\eps}\bu(\tau), \ \dot{\bar{\gamma}}(\tau)=E(\bx(\tau))\cdot \bu(\tau), 
\end{aligned}
\end{equation}
with initial conditions $(\bx^{\intercal}(0),\bar{t}(0),\bu^{\intercal}(0),\bar{\gamma}(0))^{\intercal}=\left(\bx_{0}^{\intercal},0,\bu_{0}^{\intercal},\sqrt{1+\abs{\bu_{0}}^2}\right)^{\intercal}$. To unify time and space into four-dimensional Minkowski spacetime, we introduce the imaginary variables $\gamma=\ii\bar{\gamma}$ and $t=\ii\bar{t}$ (\cite{ZLWLT}), under which the relativistic CPD becomes
\begin{equation}\label{4d-orig}
\begin{aligned}
&\dot{\bx}(\tau)=\bu(\tau), \ \dot{t}(\tau)=\gamma(\tau), \\
&\dot{\bu}(\tau)=-\ii \gamma(\tau)E(\bx(\tau))+\frac{\widehat{B}(\eps\bx(\tau))}{\eps}\bu(\tau), \ \dot{\gamma}(\tau)=\ii E(\bx(\tau))\cdot \bu(\tau), 
\end{aligned}
\end{equation}
where $(\bx,t)$ is called 4-position, $(\bu,\gamma)$ is called 4-velocity, and $$\widehat{B}(\bx)=\begin{pmatrix} 0 & b_{3}(\bx) & -b_{2}(\bx) \\
-b_{3}(\bx) & 0 & b_{1}(\bx) \\ b_{2}(\bx) & -b_{1}(\bx) & 0 \end{pmatrix}$$ satisfies $\widehat{B}^{3}=-\tilde{b}^{2}\widehat{B}$ with $\tilde{b}=\norm{B}$. Setting $\widehat{B}_{0}=\widehat{B}(\eps\bx_{0})$ and defining $\bq=\eps\bu, p=\eps \gamma$, we can rewrite \eqref{4d-orig} equivalently as
\begin{equation}\label{4d}
\begin{aligned}
&\dot{\bx}(\tau)=\frac{\bq(\tau)}{\eps}, \quad \bx(0)=\bx_{0}, \\
&\dot{t}(\tau)=\frac{p(\tau)}{\eps},  \quad t(0)=0, \\                                                                
&\dot{\bq}(\tau)=\frac{\widehat{B}_0}{\eps}\bq(\tau)+\frac{\widehat{B}(\eps\bx(\tau))-\widehat{B}_{0}}{\eps}\bq(\tau)-\ii p(\tau)E(\bx(\tau)) , \quad \bq(0)=\eps\bu_{0}, \\
&\dot{p}(\tau)=\ii E(\bx(\tau))\cdot \bq(\tau), \quad p(0)=\ii \eps\sqrt{1+\abs{\bu_{0}}^2}.
\end{aligned}
\end{equation}
Analogously, denote $\by=(\bx^{\intercal},t,\bq^{\intercal},p)^{\intercal}$. Then system \eqref{4d} admits the representation \eqref{equ-5-7-2} with the matrix $A=\begin{pmatrix} \bm{0}_{4\times 4} & \bm{I}_{4\times 4} \\ \bm{0}_{4\times 4} & \bar{B}_{0}\end{pmatrix}$, where $\bar{B}(\bx)=\text{diag}(\widehat{B}(\bx),0)$ and $\bar{B}_{0}=\bar{B}(\eps\bx_{0})$, along with $F(\by)=\begin{pmatrix}  \mathbf{0}_{3\times 3} \\ 0 \\ \frac{\widehat{B}(\eps\bx)-\widehat{B}_{0}}{\eps}\bq-\ii p E(\bx) \\ \ii E(\bx)\cdot \bq \end{pmatrix}$, and the dimension is $d=8$.

As before, we assume that the electric and magnetic fields are Lipschitz continuous up to order $k$, with constants independent of $\eps$. Specifically, we assume that 
\begin{equation}\label{equ-5-19-5}
\norm{B(\bx_{1})-B(\bx_{2})}\leq C_{B} \norm{\bx_{1}-\bx_{2}}, \ \bx_{1},\bx_{2}\in \mathbb{C}^{3},
\end{equation}
with $C_{B}$ is a constant independent of $\eps$.
To streamline the presentation, we define for \eqref{4d}
\begin{equation*}
\mathcal{X}(\tau)=(\bx^{\intercal}(\tau),t(\tau))^{\intercal}, \quad
\mathcal{Y}(\tau)=(\bq^{\intercal}(\tau), p(\tau))^{\intercal}.
\end{equation*}

\begin{lemma}\label{lem-3}
Under the above assumptions and the uniform boundedness of the initial data for the original equation \eqref{equ-18-3}, the solution of \eqref{4d} satisfies
\begin{equation}\label{solu-bound-1}
\norm{\mathcal{X}(\tau)}\leq C, \quad \norm{\mathcal{Y}(\tau)}\leq C\eps, \quad 0\leq t\leq T,
\end{equation}
with some constant $C>0$ independent of $\eps$. Moreover, the bounds $$\norm{\dot{\mathcal{X}}(\tau)}\leq C,\ \  \norm{\dot{\mathcal{Y}}(\tau)}\leq C$$ hold. Expressing \eqref{4d} as \eqref{equ-5-7-2} additionally gives $$\norm{F(\by)}\leq C,\ \norm{\dot{\by}}\leq C.$$ Furthermore, concerning the higher-order derivatives of $F(\by)$ in \eqref{equ-21-3}, we  have the following estimates
\begin{equation}\label{equ-5-19-8}
\left\Vert\sum\limits_{\bar{\xi}\in\tilde{I}_{d}^{[[k]]}}\frac{1}{\mu(\bar{\xi})}
\frac{\partial^{\abs{\bar{\xi}}}F(\hat{\by})}{\partial y_{1}^{\abs{\bar{\xi}_{\{1\}}}}\cdots \partial y_{d}^{\abs{\bar{\xi}_{\{d\}}}}}\right\Vert \leq C, \ \mbox{for} \ k\geq 1.
\end{equation}
\end{lemma}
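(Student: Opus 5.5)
The plan follows the proof of Lemma \ref{lem-4}, adapted to the 4D system \eqref{4d}. The key fact is that the rotation $\widehat{B}$ is skew-symmetric, so it preserves the Euclidean norm of the spatial momentum.

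\textbf{Step 1: a priori bounds from the original system.} I would first obtain bounds directly from \eqref{equ-18-3}. Conservation of $\mathcal{H}=\bar\gamma+U(\bx)$ controls $\bar\gamma$ and hence $\abs{\bu}$, provided $U$ stays bounded. To close this without a circular argument, I would use a Gronwall argument as in Lemma \ref{lem-4}. Taking the inner product of the $\bu$-equation with $\bu$ kills the magnetic term, since $\bu\cdot(\bu\times\mathfrak{B})=0$. This gives $\frac{d}{d\tau}\abs{\bu}\le \bar\gamma\norm{E(\bx)}$ together with $\frac{d}{d\tau}\bar\gamma\le\norm{E}\abs{\bu}$. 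Combined with $\frac{d}{d\tau}\norm{\bx}\le\abs{\bu}$ and the Lipschitz bound $\norm{E(\bx)}\le\norm{E(\bx_0)}+C_E\norm{\bx-\bx_0}$, Gronwall yields $\norm{\bx}+\abs{\bu}+\bar\gamma\le C$ on a bounded $\tau$-interval. Then $\abs{\bar t}\le\int\bar\gamma\le C$. Recalling $\bq=\eps\bu$ and $p=\eps\gamma$ with $\abs{\gamma}=\bar\gamma$, this gives $\norm{\mathcal{X}}\le C$ and $\norm{\mathcal{Y}}\le C\eps$.

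\textbf{Step 2: derivative bounds and $\norm{F(\by)}$.} From \eqref{4d}, $\dot{\mathcal X}=\mathcal Y/\eps$, which is $\mathcal{O}(1)$. For $\dot{\mathcal Y}$, the term $\widehat B_0\bq/\eps$ is $\mathcal{O}(1)$ because $\bq=\mathcal{O}(\eps)$. The difference term satisfies $\norm{(\widehat B(\eps\bx)-\widehat B_0)/\eps}\le C_B\norm{\bx-\bx_0}\le C$, so it contributes $\mathcal{O}(\eps)$. The terms $pE$ and $E\cdot\bq$ are $\mathcal{O}(\eps)$. Hence $\norm{\dot{\mathcal Y}}\le C$, which gives $\norm{\dot\by}\le C$ and $\norm{F(\by)}\le C$.

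Note that, unlike Lemma \ref{lem-4}, I do not need $\norm{\bx-\bx_0}=\mathcal{O}(\eps)$ here. The stated bounds are only $\mathcal{O}(1)$, so the integration-by-parts step of the earlier proof can be skipped.

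\textbf{Step 3: higher-order derivatives \eqref{equ-5-19-8}.} I would treat the nonzero components of $F$ case by case, as in Lemma \ref{lem-4}, writing $\alpha=\eps\bx$.

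\emph{Case 1: only $\bx$-derivatives.} For the magnetic part, $\partial_{\bx}^k$ of $(\widehat B(\eps\bx)-\widehat B_0)\bq/\eps$ equals $\eps^{k-1}\partial_\alpha^k\widehat B\,\bq$, which is $\mathcal{O}(\eps^k)$. The electric parts give $p\,\partial^kE$ and $\partial^kE\cdot\bq$, both $\mathcal{O}(\eps)$.

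\emph{Case 2: exactly one derivative in $\bq$ or $p$.} The $\bq$-derivative of the magnetic term with $k-1$ derivatives in $\bx$ is $\eps^{k-2}\partial^{k-1}_\alpha\widehat B$ for $k\ge2$. For $k=1$ it is $(\widehat B(\eps\bx)-\widehat B_0)/\eps=\mathcal{O}(1)$. The $p$-derivative of $pE$ and the $\bq$-derivative of $E\cdot\bq$ give $\partial^{k-1}E=\mathcal{O}(1)$.

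\emph{Case 3: two or more derivatives in $\mathcal Y$.} All such derivatives vanish, since $F$ is affine in $\mathcal Y$. No derivatives in $t$ appear, because $F$ does not depend on $t$.

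\textbf{Main obstacle.} The delicate point is the $k=1$ magnetic term in Case 2. Bounding it requires the bound $\norm{\bx-\bx_0}\le C$ on the hat point $\hat\by$, and this is available because $\hat\by$ lies on the trajectory, so Step 1 applies. Otherwise every term is at most $\mathcal{O}(1)$ uniformly in $\eps$, which gives \eqref{equ-5-19-8}. Summing over the finitely many multi-indices, with the weights $1/\mu(\bar\xi)$, preserves the bound.
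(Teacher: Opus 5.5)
Your Steps 2 and 3 follow the paper: the same bounds on $\dot{\mathcal X},\dot{\mathcal Y}$, and the same three-case count of $\mathcal Y$-derivatives, with the $k=1$ magnetic term controlled by $\norm{\bx-\bx_0}\le C$. So everything rests on Step 1, and there the Gronwall step does not close.

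In Lemma \ref{lem-4} the forcing $\eps E(\bx)$ in the $\tilde{\bw}$-equation is not multiplied by an unknown, so the inequality is linear. Here the force enters as $\bar\gamma E(\bx)$ and $\dot{\bar\gamma}=E\cdot\bu$. With only the Lipschitz bound $\norm{E(\bx)}\le\norm{E(\bx_0)}+C_E\norm{\bx-\bx_0}$, the quantity $\phi=\norm{\bx}+\abs{\bu}+\bar\gamma$ satisfies only $\dot\phi\le c_1\phi+C_E\phi^2$. That is a Riccati inequality: the Gronwall coefficient depends on the unknown, and comparison bounds $\phi$ only on $[0,T^*)$ with $T^*$ fixed by the data, not on a prescribed $[0,T]$.

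This is not a technicality. Take $B\equiv(0,0,1)^\intercal$ and $U(\bx)=-x_3^2$, so $E=(0,0,2x_3)^\intercal$ is globally Lipschitz to every order. Take $x_3(0)=u_3(0)=1$ and $u_1(0)=u_2(0)=0$. Then $u_1=u_2\equiv 0$, energy conservation gives $\bar\gamma=\sqrt2-1+x_3^2$, and $\dot x_3=u_3=\sqrt{\bar\gamma^2-1}\sim x_3^2$. Hence $x_3(\tau)\to\infty$ at a finite proper time independent of $\eps$, and the claimed bounds cannot hold on an arbitrary $[0,T]$ under the stated hypotheses.

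The clean repair is the energy argument you set aside. If $U$ is bounded below, $U\ge -M$ (true in Examples 3 and 4), conservation of $\bar\gamma+U(\bx)$ along \eqref{equ-4-23-1} gives $\bar\gamma\le\bar\gamma_0+U(\bx_0)+M$ directly, with no circularity. Hence $\abs{\bu}\le\bar\gamma\le C$, $\norm{\mathcal Y}=\eps(\abs{\bu}^2+\bar\gamma^2)^{1/2}\le C\eps$, and $\norm{\mathcal X(\tau)}\le\norm{\mathcal X_0}+C\tau$. Your Steps 2--3 then go through unchanged. Alternatively, if $E$ is bounded, $\frac{d}{d\tau}(\abs{\bu}+\bar\gamma)\le\norm{E}_\infty(\abs{\bu}+\bar\gamma)$ is linear and Gronwall closes.

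Switching to the paper's route would not avoid the problem. The paper passes to $\mathcal M=\exp(-\tau\bar B_0/\eps)\mathcal Y$ and infers $\frac{d}{d\tau}\norm{\mathcal M}\le 0$ from the skew-symmetry of $\bar F$. But $\bar F$ satisfies only $\bar F^\intercal=-\bar F$, not $\bar F^{*}=-\bar F$: its electric block $\bar E$ in \eqref{equ-5-19-6} is Hermitian. The transpose identity conserves the bilinear form $\mathcal M^\intercal\mathcal M=\eps^2(\abs{\bu}^2-\bar\gamma^2)=-\eps^2$, which is just the mass shell. It does not conserve the Euclidean norm $\norm{\mathcal M}^2=\eps^2(\abs{\bu}^2+\bar\gamma^2)$, whose derivative is $4\eps^2\bar\gamma\,E\cdot\bu$. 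So the paper's proof has its gap at the same point as yours, and an extra hypothesis of the kind above is needed either way.
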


\begin{proof}
We introduce the following transformation
\begin{equation*}
\mathcal{M}(\tau):=\begin{pmatrix} \tilde{\bq}(\tau) \\ p(\tau) \end{pmatrix}  =\exp(-\tau\bar{B}_{0}/\eps)\mathcal{Y}(\tau)
=\begin{pmatrix}
         \exp(-\tau\bar{B}_{0}/\eps) & 0 \\
         0 & 1
       \end{pmatrix} \mathcal{Y}(\tau),
\end{equation*}
with
$$
\exp(\pm\tau\bar{B}_{0}/\eps)=I_{4}\pm \frac{\sin(\tilde{b}\tau/\eps)}{\tilde{b}}\bar{B}_{0}+\frac{1-\cos(\tilde{b}\tau/\eps)}{\tilde{b}^{2}}\bar{B}_{0}^{2}.
$$
The system \eqref{4d} is equivalently written as
\begin{subequations}
\begin{align}
\dot{\mathcal{X}}(\tau)=&\frac{1}{\eps}\exp(\tau\bar{B}_{0}/\eps)\mathcal{M}(\tau), \label{equ-4-23-sub1} \\
\dot{\mathcal{M}}(\tau)=&\exp(-\tau\bar{B}_{0}/\eps)\bar{F}(\bx)\exp(\tau\bar{B}_{0}/\eps)\mathcal{M}(\tau),   \label{equ-4-23-sub2}
\end{align}
\end{subequations}
where
\begin{equation}\label{equ-5-19-6}
\begin{aligned}
&\bar{F}(\bx)=\frac{1}{\eps}(\bar{B}(\eps\bx)-\bar{B}_{0})+\bar{E}(\bx)=\begin{pmatrix} \frac{\widehat{B}(\eps\bx)-\widehat{B}_{0}}{\eps} & -\ii E(\bx) \\ \ii E^{\intercal}(\bx) & 0 \end{pmatrix}, \\
&\bar{E}(\bx)=\begin{pmatrix} \mathbf{0}_{3\times 3} & -\ii E(\bx) \\ \ii E^{\intercal}(\bx) & 0 \end{pmatrix}.
\end{aligned}
\end{equation}
By taking the inner products of \eqref{equ-4-23-sub1} and \eqref{equ-4-23-sub2} with $\mathcal{X}(\tau)$ and $\mathcal{M}(\tau)$ and using Cauchy–Schwarz, we obtain
\begin{equation}\label{equ-4-23-2}
\begin{aligned}
&\frac{d}{d\tau}\norm{\mathcal{X}(\tau)}^{2}\leq \frac{2}{\eps} \norm{\mathcal{M}(\tau)} \norm{\mathcal{X}(\tau)}, \\
&\frac{d}{d\tau}\norm{\mathcal{M}(\tau)}^{2}\leq 2 \norm{\mathcal{M}^{\intercal}(\tau)\exp(-\tau\bar{B}_{0}/\eps)
\bar{F}(\bx)\exp(\tau\bar{B}_{0}/\eps)\mathcal{M}(\tau)}.
\end{aligned}
\end{equation}
The skew-symmetry of $\bar{B}_{0}$ gives
\begin{align*}
\(\exp(\tau\bar{B}_{0}/\eps)\)^{\intercal}=I_{4}-\frac{\sin(\tau\tilde{b}/\eps)}{\tilde{b}}\bar{B}_{0}
+\frac{1-\cos(\tau\tilde{b}/\eps)}{\tilde{b}^{2}}\bar{B}_{0}^{2}=\exp(-\tau\bar{B}_{0}/\eps),
\end{align*}
while that of $\bar{F}(\bx)$ yields
\begin{equation}\label{equ-4-23-3}
\mathcal{M}^{\intercal}(\tau)\exp(-\tau\bar{B}_{0}/\eps)
\bar{F}(\bx)\exp(\tau\bar{B}_{0}/\eps)\mathcal{M}(\tau)=0.
\end{equation}
It follows that
$$
\frac{d}{d\tau}\norm{\mathcal{M}(\tau)}\leq 0,
$$
so that $$\norm{\mathcal{M}(\tau)}\leq \norm{\mathcal{M}_{0}}\leq C\eps.$$ This implies $\norm{\mathcal{Y}(\tau)}\leq C \eps$, and from the first inequality in \eqref{equ-4-23-2}, we obtain
\begin{align*}
\norm{\mathcal{X}(\tau)}\leq \norm{\mathcal{X}_{0}}+C\tau \leq \norm{\mathcal{X}_{0}}+CT \leq C.
\end{align*}
Moreover, the estimate $\norm{\widehat{B}(\eps\bx)-\widehat{B}_{0}}/\eps \leq C_{B}\norm{\bx-\bx_{0}}\leq C$ holds, which gives $$\norm{F(\by(\tau))}=\norm{\bar{F}(\bx(\tau))\mathcal{Y}(\tau)}\leq C.$$ Applying these bounds to \eqref{4d} shows that 
\begin{equation*}
\begin{aligned}
&\norm{\dot{\mathcal{X}}(\tau)}=\frac{1}{\eps}\norm{\mathcal{Y}(\tau)}\leq C, \\
&\norm{\dot{\mathcal{Y}}(\tau)}=\left\Vert\frac{1}{\eps}\bar{B}_{0}\mathcal{Y}(\tau)+\bar{F}(\bx)\mathcal{\mathcal{Y}}(\tau)\right\Vert\leq C(1+\eps)\leq C,
\end{aligned}
\end{equation*}
which implies that $\norm{\dot{\by}(\tau)}=\sqrt{\norm{\dot{\mathcal{X}}(\tau)}^{2}+\norm{\dot{\mathcal{Y}}(\tau)}^{2}}\leq C$. Rewriting $F(\by)$ in the compact form $F(\by(\tau)):=\begin{pmatrix}
    \mathcal{F}_{1}(\by) \\  \mathcal{F}_{2}(\by)
\end{pmatrix}=\begin{pmatrix}
    \bm{0}_{4\times 1} \\ \bar{F}(\bx(\tau))\mathcal{Y}(\tau)
\end{pmatrix}$ shows that the partial derivatives of $F(\by)$ at arbitrary orders reduce to those of $\mathcal{F}_{2}(\by)$. From \eqref{equ-5-19-6}, letting $\bm{\rho}=\eps\bx$, we have 
\begin{equation}\label{equ-5-19-7}
\left\Vert\frac{\partial^{k} \bar{F}(\bx)}{\partial \bx^{k}}\right\Vert=\left\Vert\eps^{k-1}\frac{\partial^{k}\bar{B}(\eps\bx)}{\partial \rho^{k}}+\frac{\partial^{k} \bar{E}(\bx)}{\partial \bx^{k}}\right\Vert\leq C, \ \mbox{for} \ k\geq 1.
\end{equation}
For the multi-indices $\bar{\xi}=(\xi_{\{1\}},\cdots,\xi_{\{8\}})$, let $\abs{\tilde{\alpha}}=\abs{\xi_{\{1\}}}+\abs{\xi_{\{2\}}}+\abs{\xi_{\{3\}}}$ denote the total multi-index length of the variable $\bx$, and $\abs{\tilde{\beta}}=\abs{\xi_{\{5\}}}+\abs{\xi_{\{6\}}}+\abs{\xi_{\{7\}}}+\abs{\xi_{\{8\}}}$ denote the total multi-index length of the variable $\mathcal{Y}$. Then for the $k$-th order partial derivatives of $\mathcal{F}_{2}(\by)$, we distinguish the following three cases:

\textit{Case 1.} If $\abs{\tilde{\beta}}=0$, $\abs{\tilde{\alpha}}=\abs{\bar{\xi}}$, by \eqref{equ-5-19-7} we have
\begin{equation}\label{4d-deri-1}
\left\Vert \frac{\partial^{\abs{\bar{\xi}}}\mathcal{F}_{2}}{\partial \by^{\abs{\bar{\xi}}}}\right\Vert=\left\Vert \frac{\partial^{\abs{\bar{\xi}}}\bar{F}(\bx)}{\partial \bx^{\abs{\bar{\xi}}}}\mathcal{Y}\right\Vert \leq C\eps.
\end{equation}

\textit{Case 2.} When $\abs{\tilde{\beta}}=1$, $\abs{\tilde{\alpha}}=\abs{\bar{\xi}}-1$, this yields
\begin{equation}\label{4d-deri-2}
\left\Vert \frac{\partial^{\abs{\bar{\xi}}}\mathcal{F}_{2}}{\partial \by^{\abs{\bar{\xi}}}}\right\Vert=\left\Vert \frac{\partial^{\abs{\bar{\xi}}}\bar{F}(\bx)}{\partial \bx^{\abs{\bar{\xi}}-1}\partial\mathcal{Y}}\right\Vert \leq C.
\end{equation}

\textit{Case 3.} If $\abs{\tilde{\beta}}\geq 2$, it follows that
$\left\Vert \frac{\partial^{\abs{\bar{\xi}}}\mathcal{F}_{2}}{\partial \by^{\abs{\bar{\xi}}}}\right\Vert=0$.
Consequently, for the sum over all multi-indices $\bar{\xi}\in\tilde{I}_{d}^{[[k]]}$, the \textit{Case 2} must be included, from which \eqref{equ-5-19-8} holds.
\hfill$\square$
\end{proof}

An analogous convergence result holds for the relativistic CPD, which we state below.

\begin{theorem}\label{4d-conv}
Assume that the solution $(\bx(\tau),t(\tau))$ and $(\bu(\tau),\gamma(\tau))$ of relativistic CPD \eqref{4d-orig} satisfies the hypothesis of Lemma \ref{lem-3}, and the numerical solution $(\bx_{n},t_{n})$ and $(\bu_{n},\gamma_{n})$ is computed via the scheme \eqref{scheme-1}-\eqref{scheme-3}. Then under the maximal ordering scaling strong magnetic field, we deduce that for any $0<h<h_{1}$,
\begin{equation}\label{equ-4-27-1}
\begin{aligned}
&\norm{(\bx^{\intercal}_{n},t_n)^{\intercal}-(\bx^{\intercal}(\tau_n),t(\tau_n))^{\intercal}}\leq C h^{k+1}, \\
&\norm{(\bu^{\intercal}_{n},\gamma_n)^{\intercal}-(\bu^{\intercal}(\tau_n),\gamma(\tau_n))^{\intercal}}\leq Ch^{k+1}/\eps, \quad k\geq 2,
\end{aligned}
\end{equation}
where the constants $C>0$ and $h_{1}>0$ are independent of $\eps$ and $h$.
\end{theorem}
\begin{proof}
By \eqref{equ-5-19-8} of the Lemma \ref{lem-3}, we deduce that
the $j$-th order Taylor remainder terms $\br^{j}$ expressed in the form of \eqref{LLES} satisfy
\begin{equation*}
\norm{\br^{j}(\by(t);\by_{n})}\leq Ch^{j}, \quad j=3,\ldots,k+1.
\end{equation*}
The remaining part of the proof mirrors the argument presented in Theorem \ref{2d-conv} and is hence omitted. 
\hfill$\square$
\end{proof}
\subsection{Numerical tests} 
In this part, we investigate the numerical behavior of the four-dimensional relativistic CPD \eqref{4d-orig} with maximal ordering scaling strong magnetic field. For notational brevity, we define $\mathcal{P}=(\bx^{\intercal},t)^{\intercal}$ and $\mathcal{Q}=(\bu^{\intercal},\gamma)^{\intercal}$. The temporal relative errors of the 4-position and 4-velocity are evaluated as $err_{\bp}=\frac{\norm{\mathcal{P}^{n}-\mathcal{P}(\tau_{n})}_{\infty}}{\norm{\mathcal{P}(\tau_{n})}_{\infty}}$ and $err_{\bv}=\frac{\norm{\mathcal{Q}^{n}-\mathcal{Q}(\tau_{n})}_{\infty}}{\norm{\mathcal{Q}(\tau_{n})}_{\infty}}$. The relative error of the energy is given by $err_{H}=\frac{\abs{H(t)-H(0)}}{H(0)}$.

\textbf{Example 3.}
In this problem, we take the electric field and a maximal ordering scaling magnetic field as
\begin{equation*}
E(\bx)=\begin{pmatrix}
        \cos(x_{1}/2)\sin(x_2)\sin(x_3)/2 \\
        \sin(x_{1}/2)\cos(x_2)\sin(x_3)  \\
        \sin(x_{1}/2)\sin(x_2)\cos(x_3) 
       \end{pmatrix}, 
\quad
B(\bx)=\begin{pmatrix}
                           \dfrac{x_{2}}{\sqrt{1+x_{1}^{2}+x_{2}^{2}}}  \\
                           -\dfrac{x_{1}}{\sqrt{1+x_{1}^{2}+x_{2}^{2}}} \\
                           \dfrac{1}{\sqrt{1+x_{1}^{2}+x_{2}^{2}}}
                         \end{pmatrix}.
\end{equation*}
The initial data is set to $\mathcal{P}_{0}=(0,1,0.1,0)^{\intercal}$ and $\mathcal{Q}_{0}=(0.09,0.05,0.2,\gamma_{0})^{\intercal}$, where $\gamma_{0}=\ii\sqrt{1+\abs{\bu_{0}}^{2}}$. We begin by testing the temporal errors of the EI3 to EI5 methods with respect to the step size $h$ up to $T=1$. 
The first row of Figure \ref{fig-3-1} shows the results for a highly oscillatory case with $\eps=1/2^8$ fixed. One observes that the EI$k+1$ method achieves $k+1$-th order accuracy, consistent with the theory. Figure \ref{fig-3-1} further investigates the numerical errors for various values of $\eps$ with a fixed large time step $h=1/2^2$ up to $T=1$. The results shows that the errors in $\bx$ and $\bv$ are uniform in $\eps$, and the $\bv$-error exceeding the theoretical predictions. For a clearer illustration, Figure \ref{fig-3-2} further presents the temporal errors of EI3 up to $T=1$ across different $h$ and $\eps$ values, showing uniform accuracy for both variables with $\bx$ consistent with theory and $\bv$ outperforming it. Finally, Figure \ref{fig-3-3} examines the energy error of the relativistic CPD over the time interval $[0,1000]$, demonstrating good approximate conservation throughout simulation.

\begin{figure}[htbp]
    \centering
    \includegraphics[height=3.8cm,width=5.8cm]{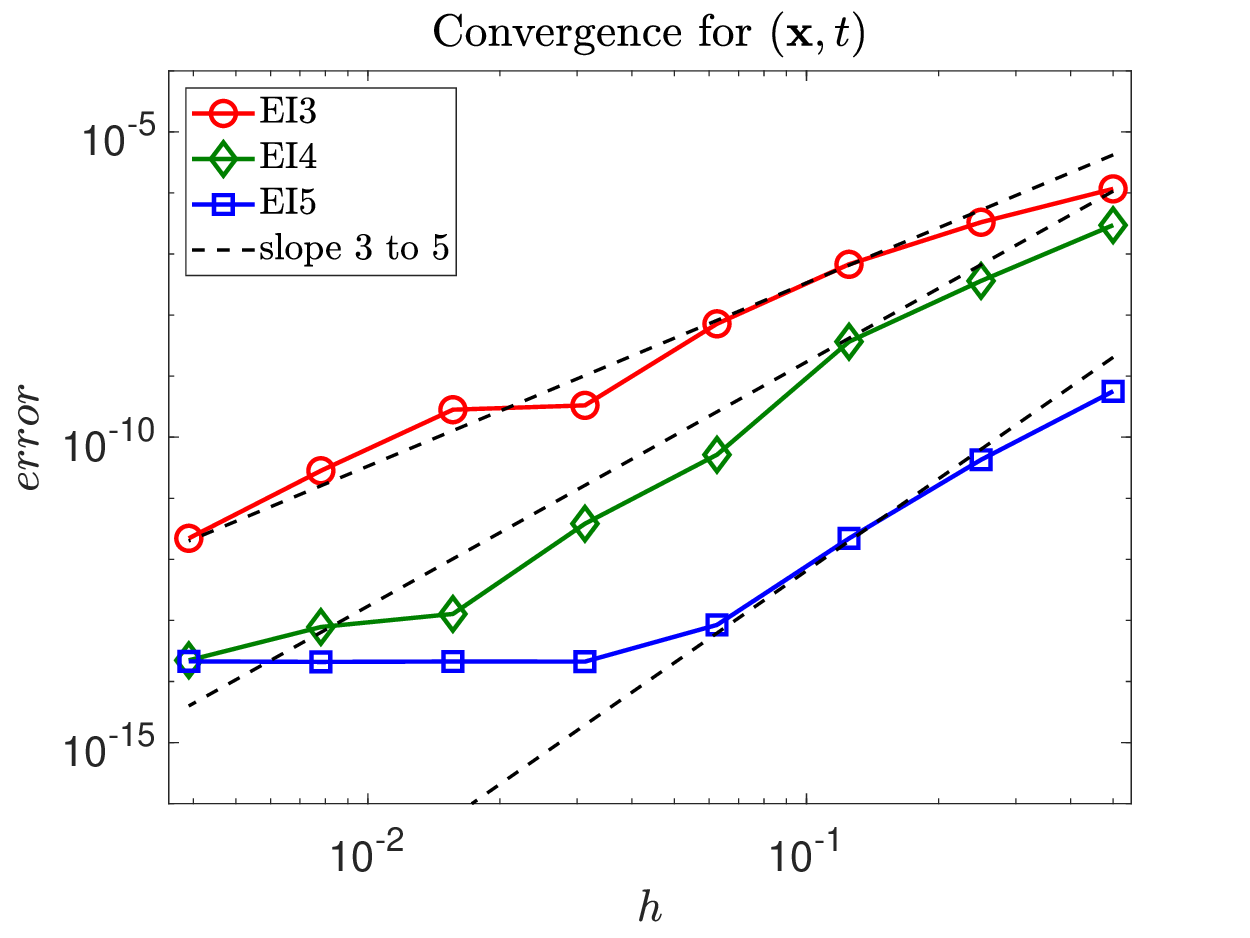}
    \hfill
    \includegraphics[height=3.8cm,width=5.8cm]{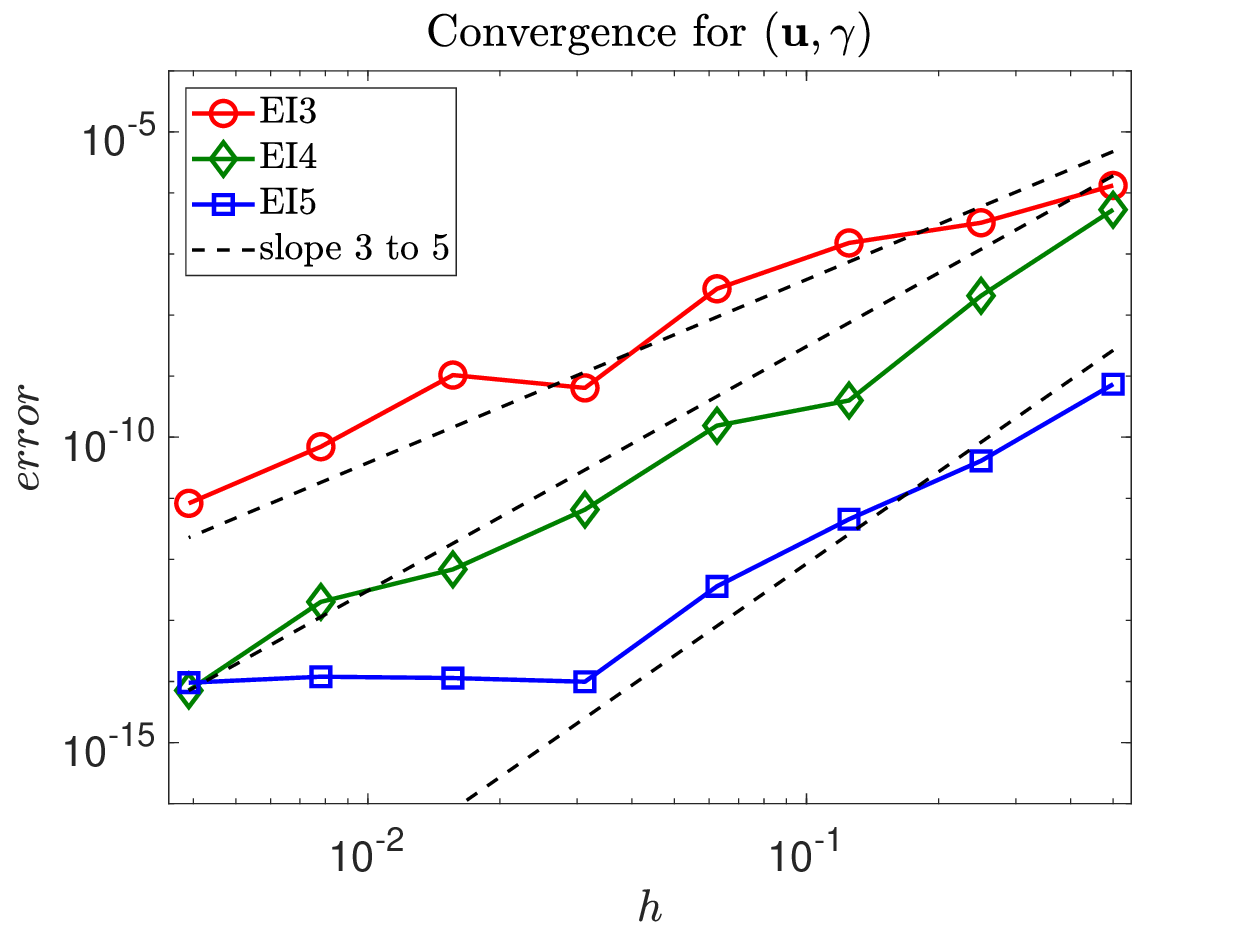}
    \par
    \includegraphics[height=3.8cm,width=5.8cm]{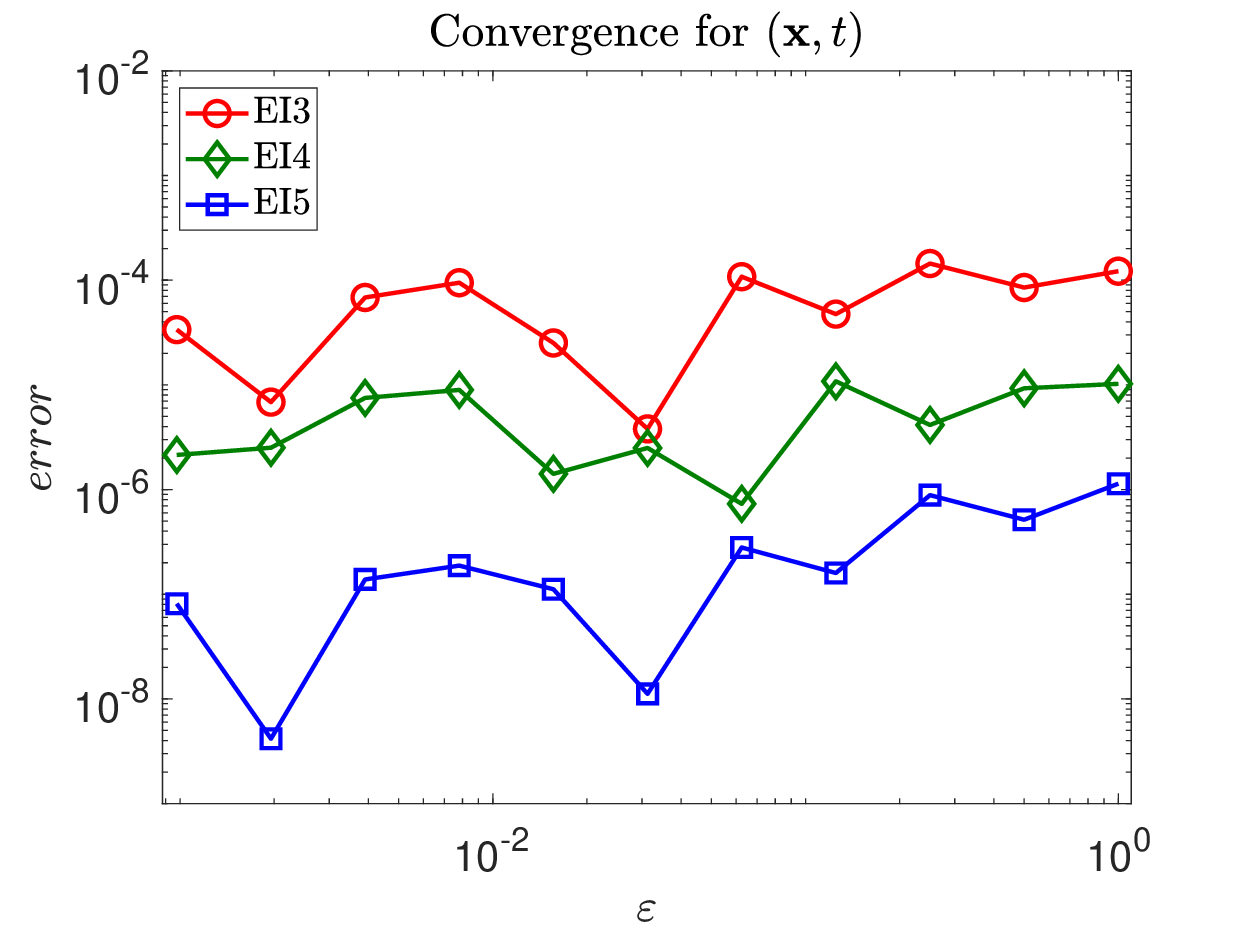}
    \hfill
    \includegraphics[height=3.8cm,width=5.8cm]{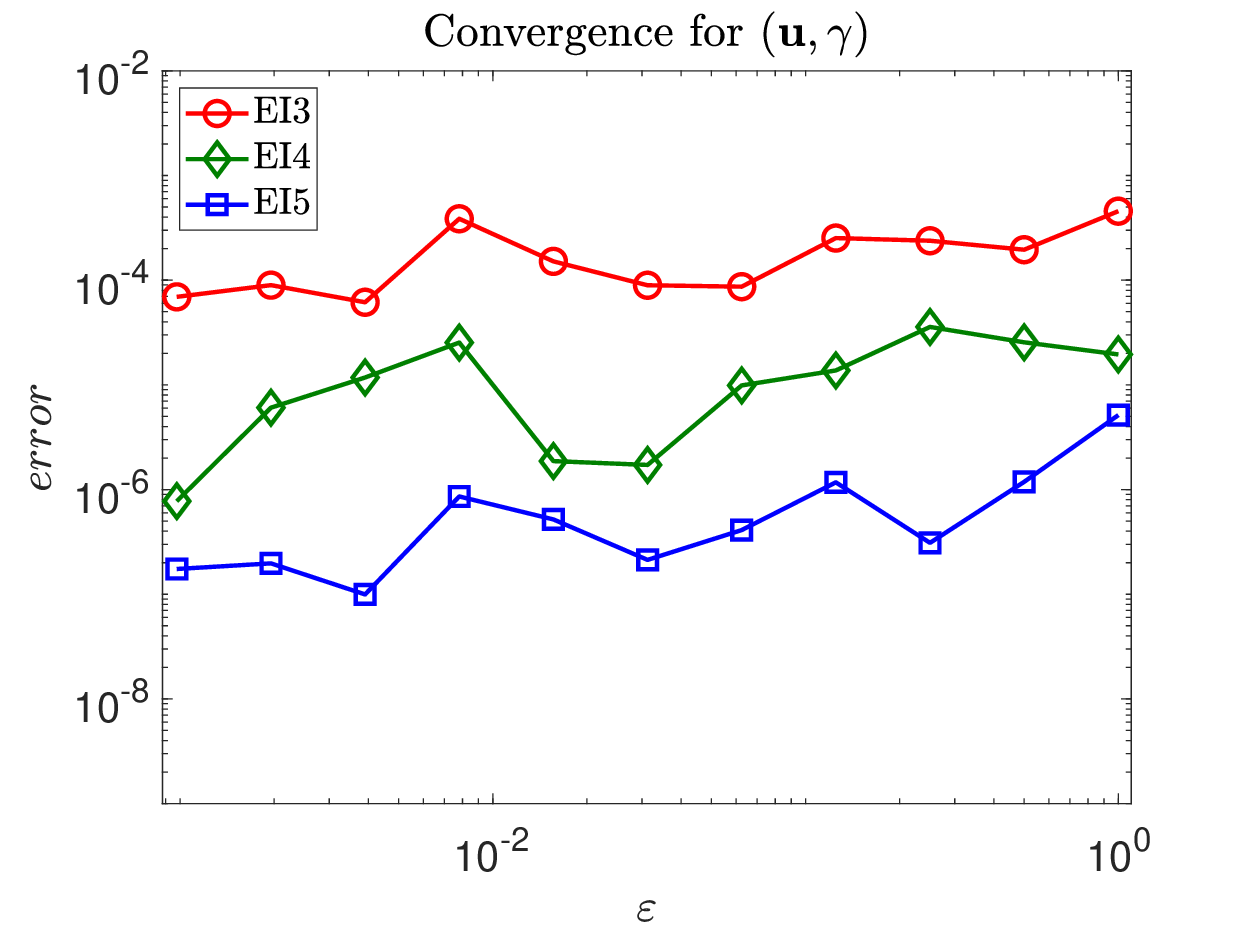}
    \caption{Example 3. The time error $err_{\bx}$ and $err_{\bv}$ about the highly oscillatory case with $\eps=1/2^8$ (top) with different $h$ and the large step size $h=1/2^2$ with different $\eps$ for EI3 to EI5.}
    \label{fig-3-1}
\end{figure}

\begin{figure}[htbp]
    \centering
    \includegraphics[height=3.8cm,width=5.8cm]{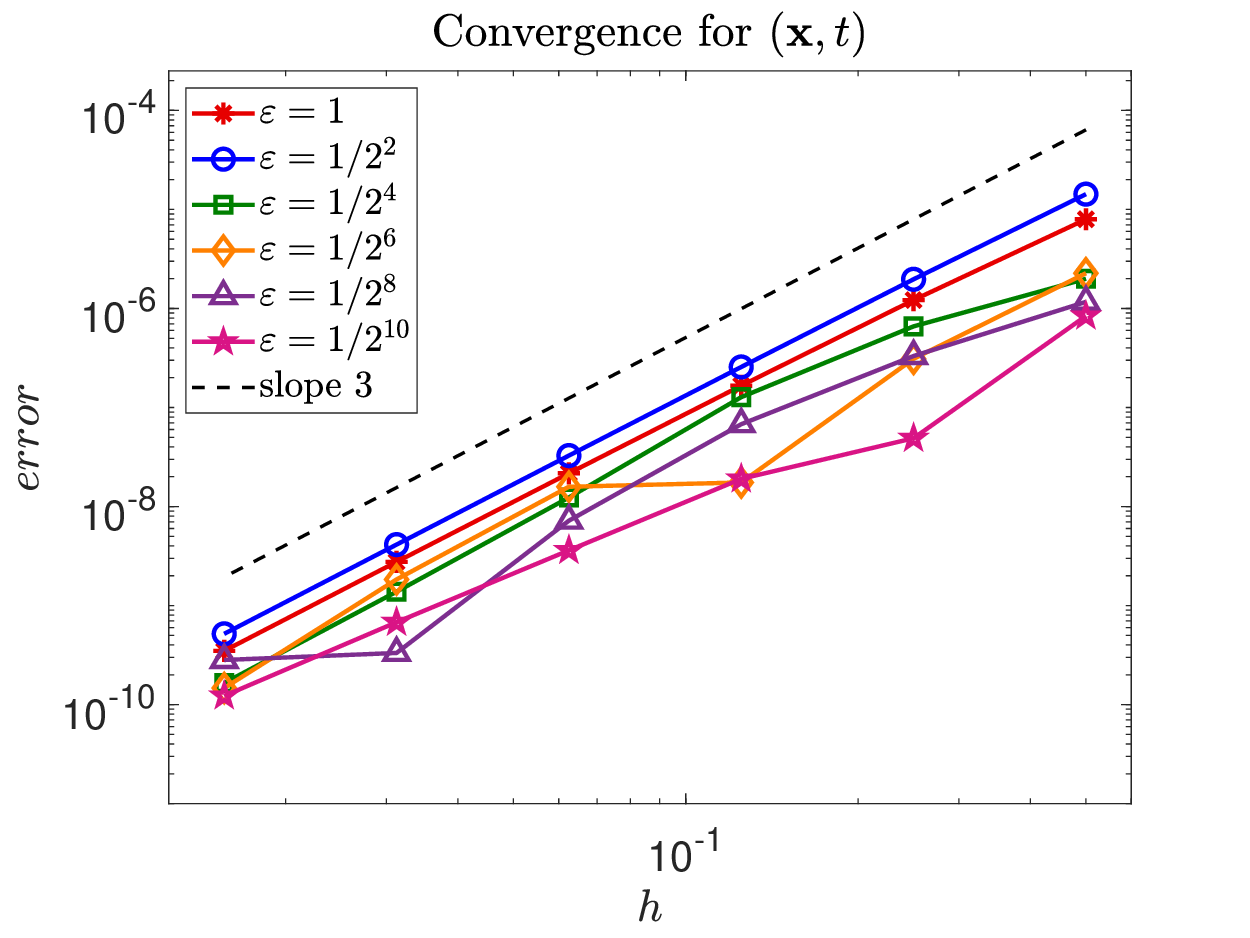}
    \hfill
    \includegraphics[height=3.8cm,width=5.8cm]{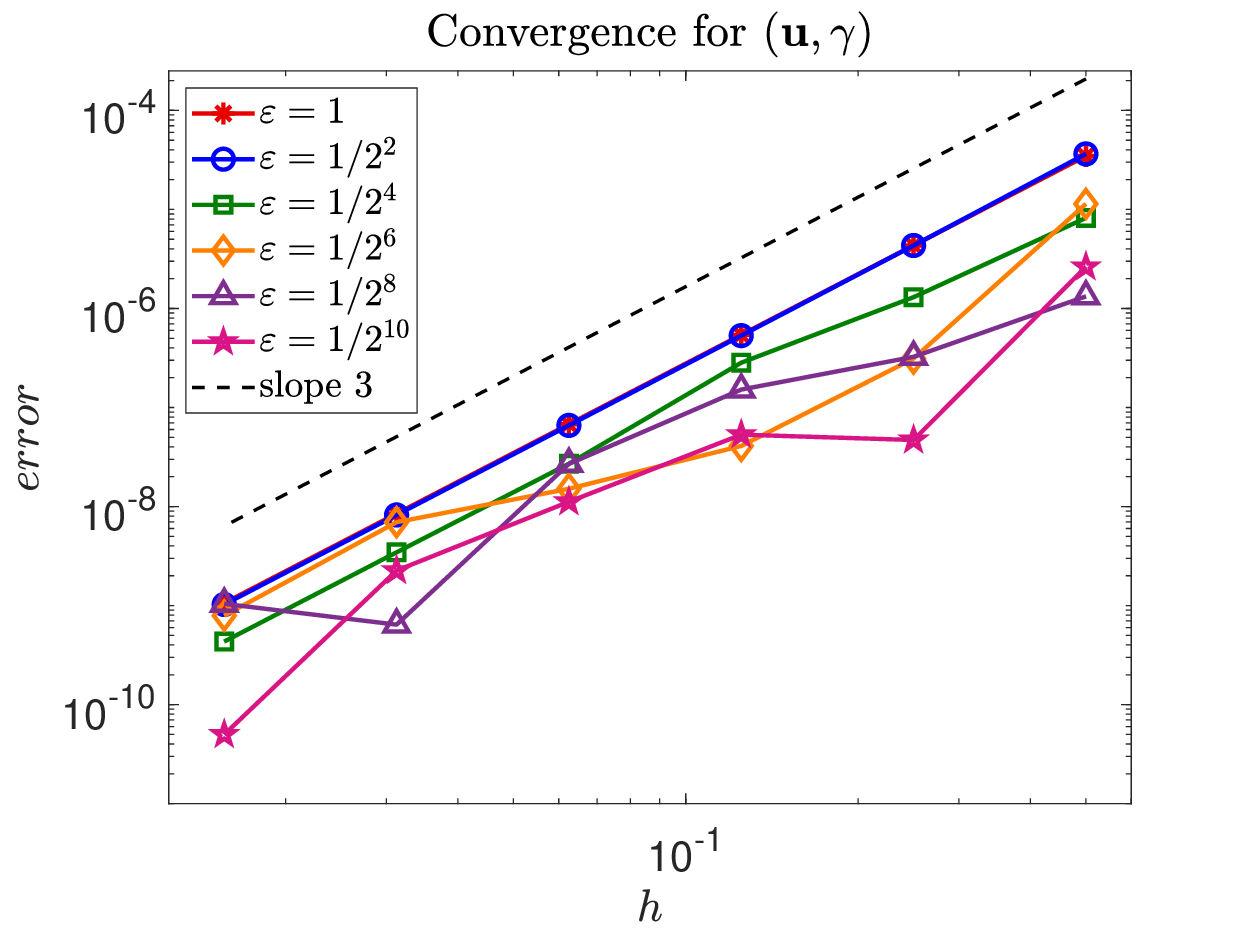}
    \par
    \includegraphics[height=3.8cm,width=5.8cm]{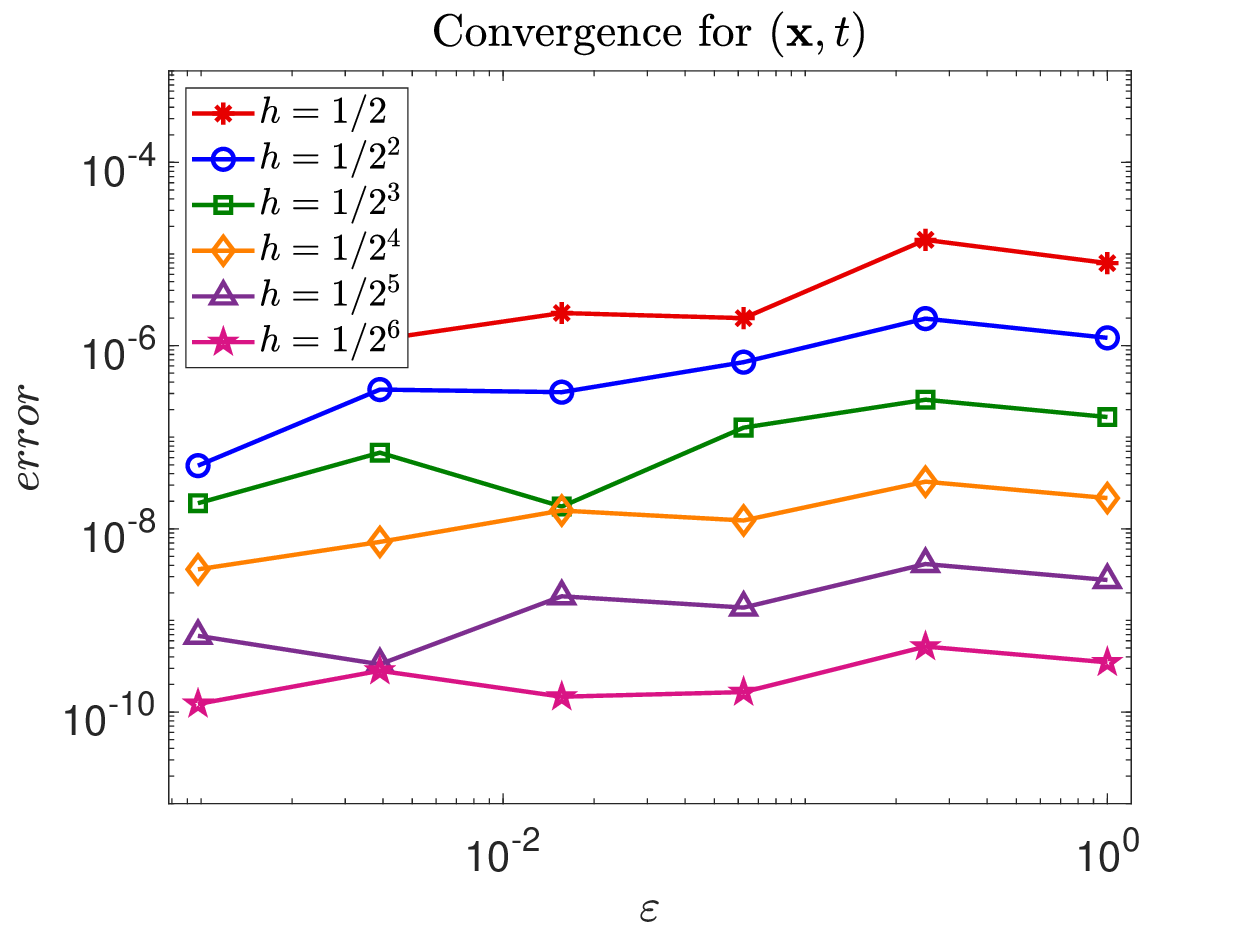}
    \hfill
    \includegraphics[height=3.8cm,width=5.8cm]{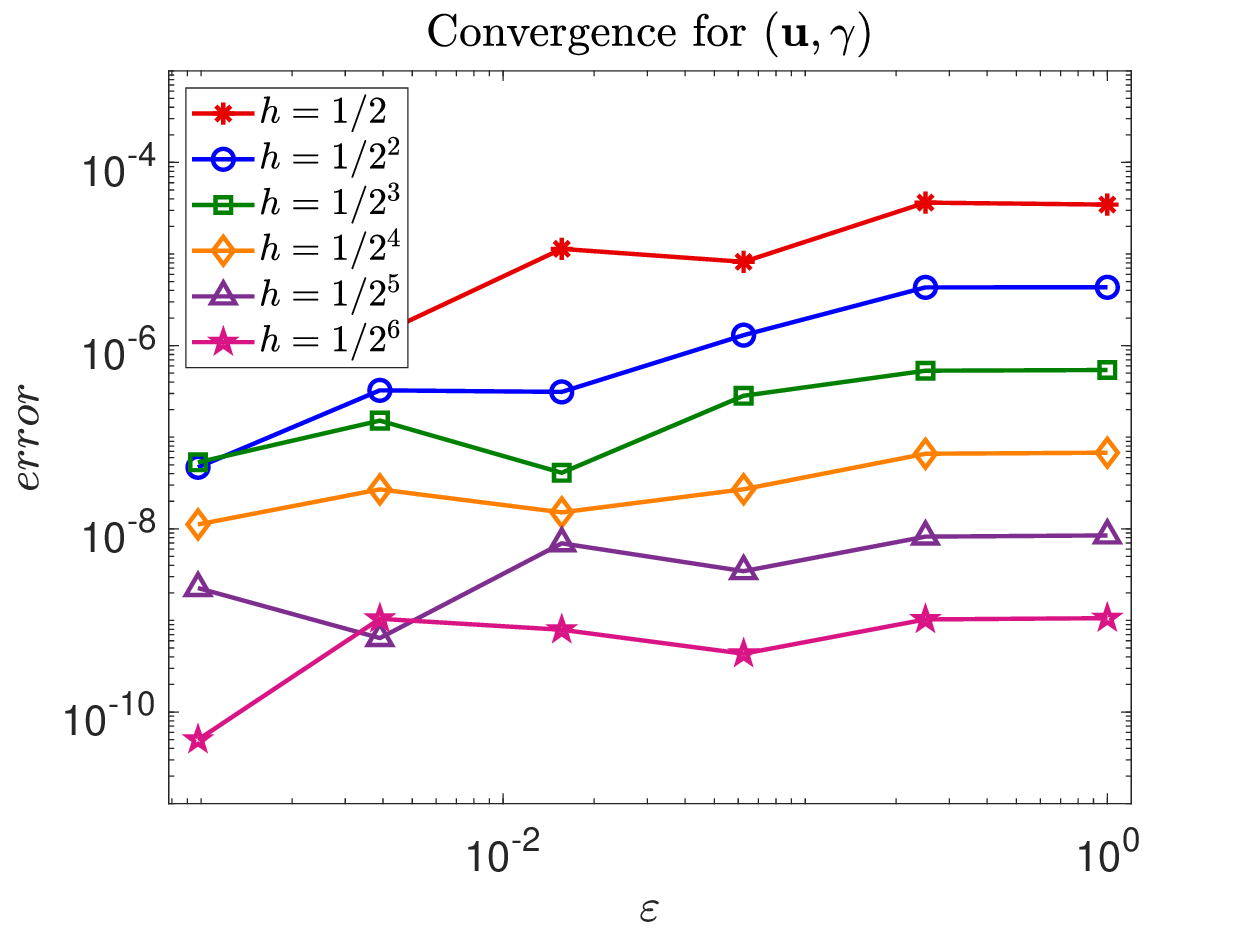}
   \caption{Example 3. The time error $err_{\bx}$ and $err_{\bv}$ about different $\eps$ (top) and various $h$ (bottom) for EI3.}
   \label{fig-3-2}
\end{figure}

\begin{figure}[htbp]
    \centering
    \includegraphics[height=3.8cm,width=5.8cm]{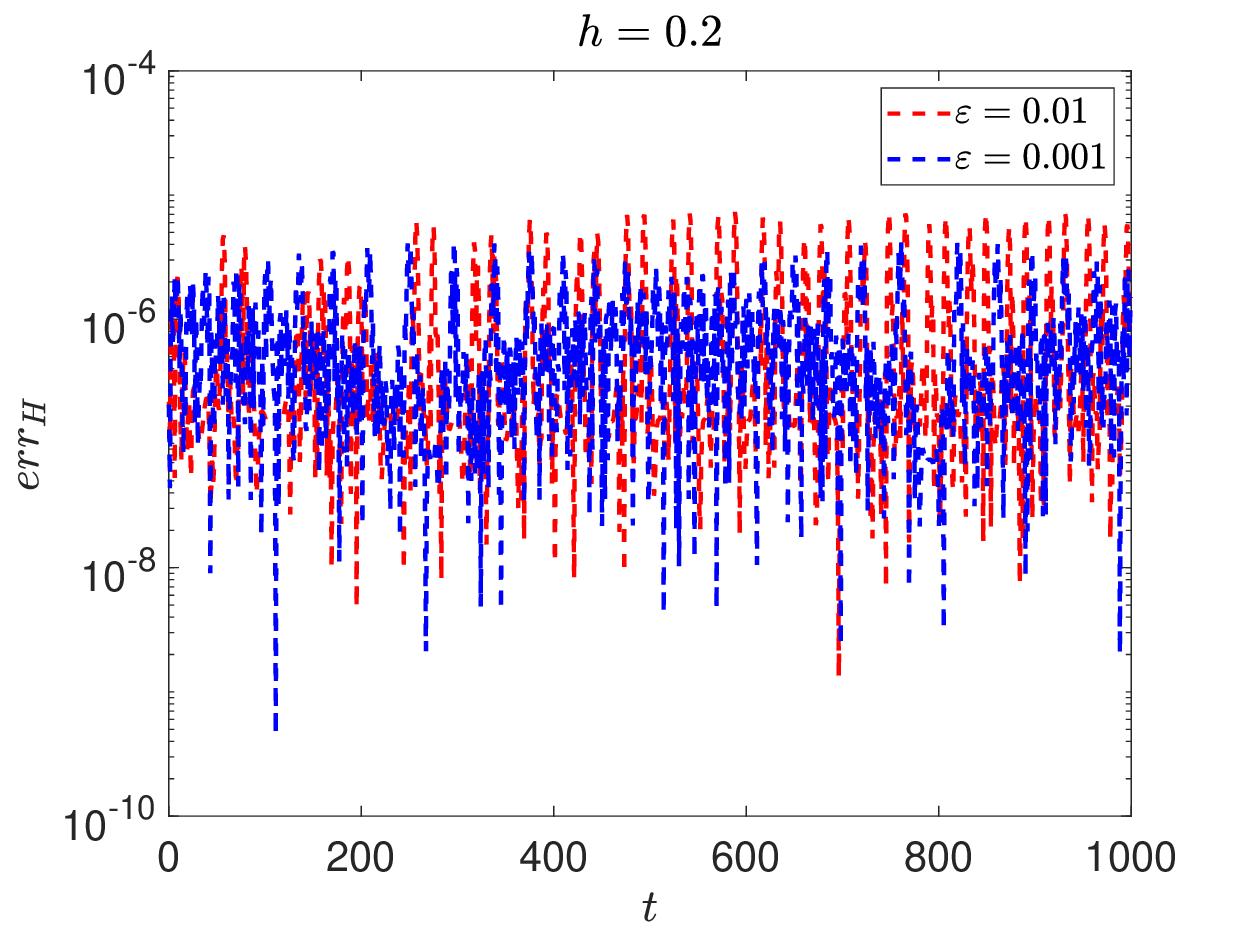}
    \hfill
    \includegraphics[height=3.8cm,width=5.8cm]{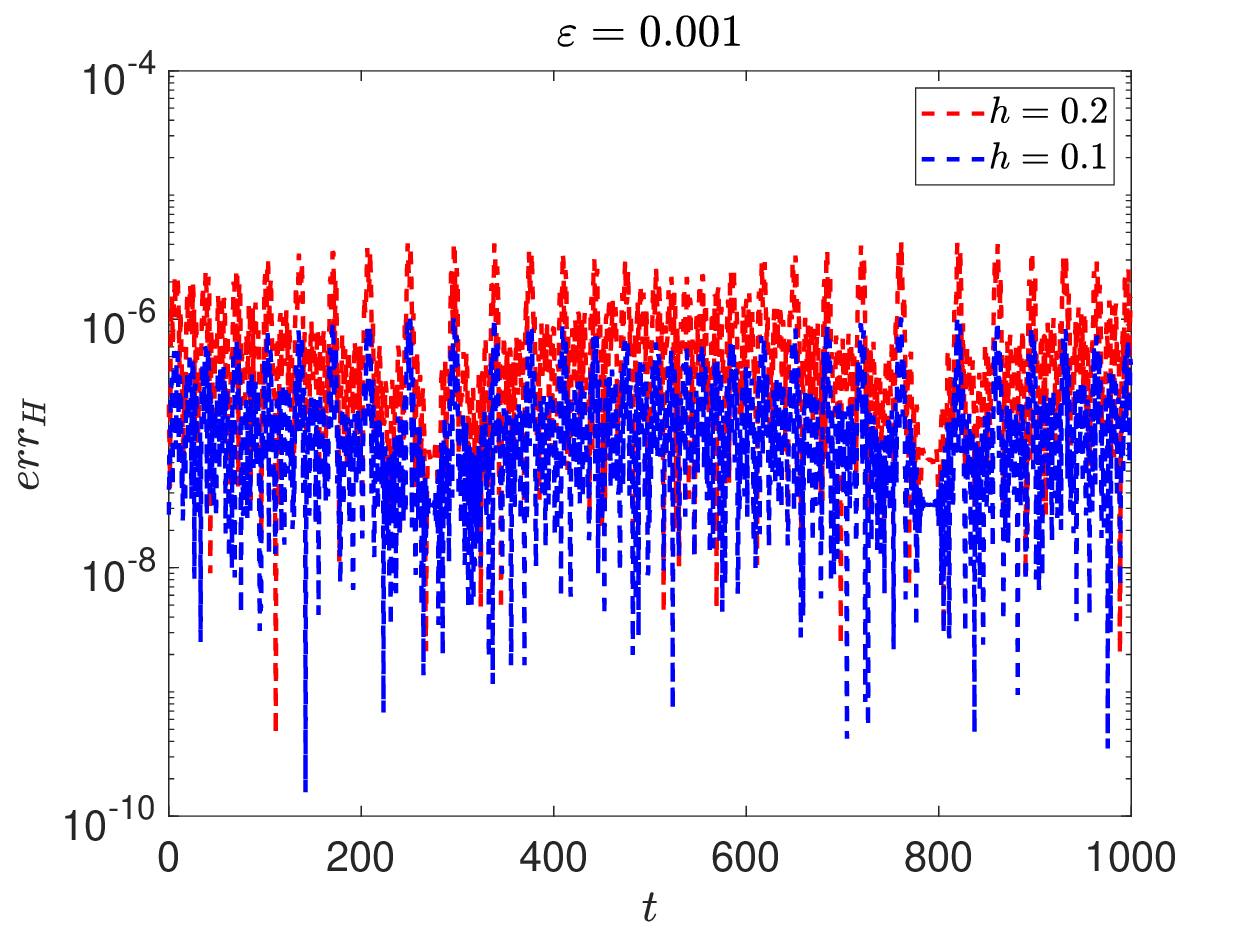}
   \caption{Example 3. The energy error of EI3 under different $\eps$ and $h$.}
   \label{fig-3-3}
\end{figure}

\textbf{Example 4.}
In the finial example, we select the electric field and magnetic field as
\begin{equation*}
E(\bx)=\begin{pmatrix}
         \dfrac{x_{1}}{(x_{1}^{2}+x_{2}^{2})^{\frac{3}{2}}} \\
         \dfrac{x_{2}}{(x_{1}^{2}+x_{2}^{2})^{\frac{3}{2}}} \\
         0
       \end{pmatrix}, 
       \quad
B(\bx)=\begin{pmatrix}
       \cos(x_{2})-x_{1}  \\
        1+\sin(x_{3}) \\
        \cos(x_{1})+x_{3}
        \end{pmatrix}.
\end{equation*}
We adopt the same initial values as in Example 3. The temporal errors of EI3 across a range of $\eps$ and step sizes are presented in Figure \ref{fig-4-1}. Remarkably, both the position and velocity exhibit third-order uniform accuracy, and the error in the velocity outperforms the theoretical result. Finally, Figure \ref{fig-4-2} depicts the energy error of the relativistic CPD up to $T=1000$ obtained via EI3, clearly indicating excellent near-conservation.

\begin{figure}[htbp]
    \centering
    \includegraphics[height=3.8cm,width=5.8cm]{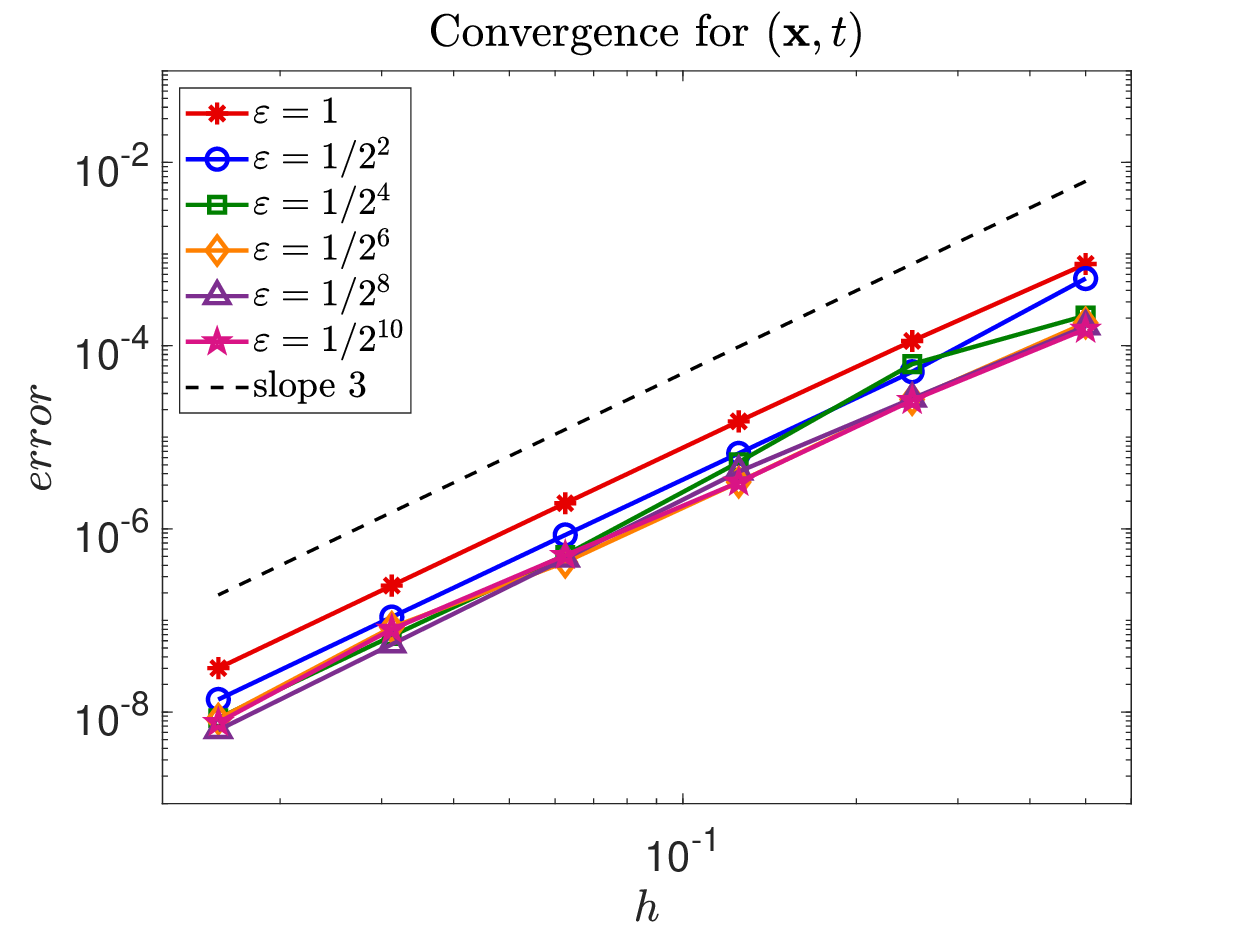}
    \hfill
    \includegraphics[height=3.8cm,width=5.8cm]{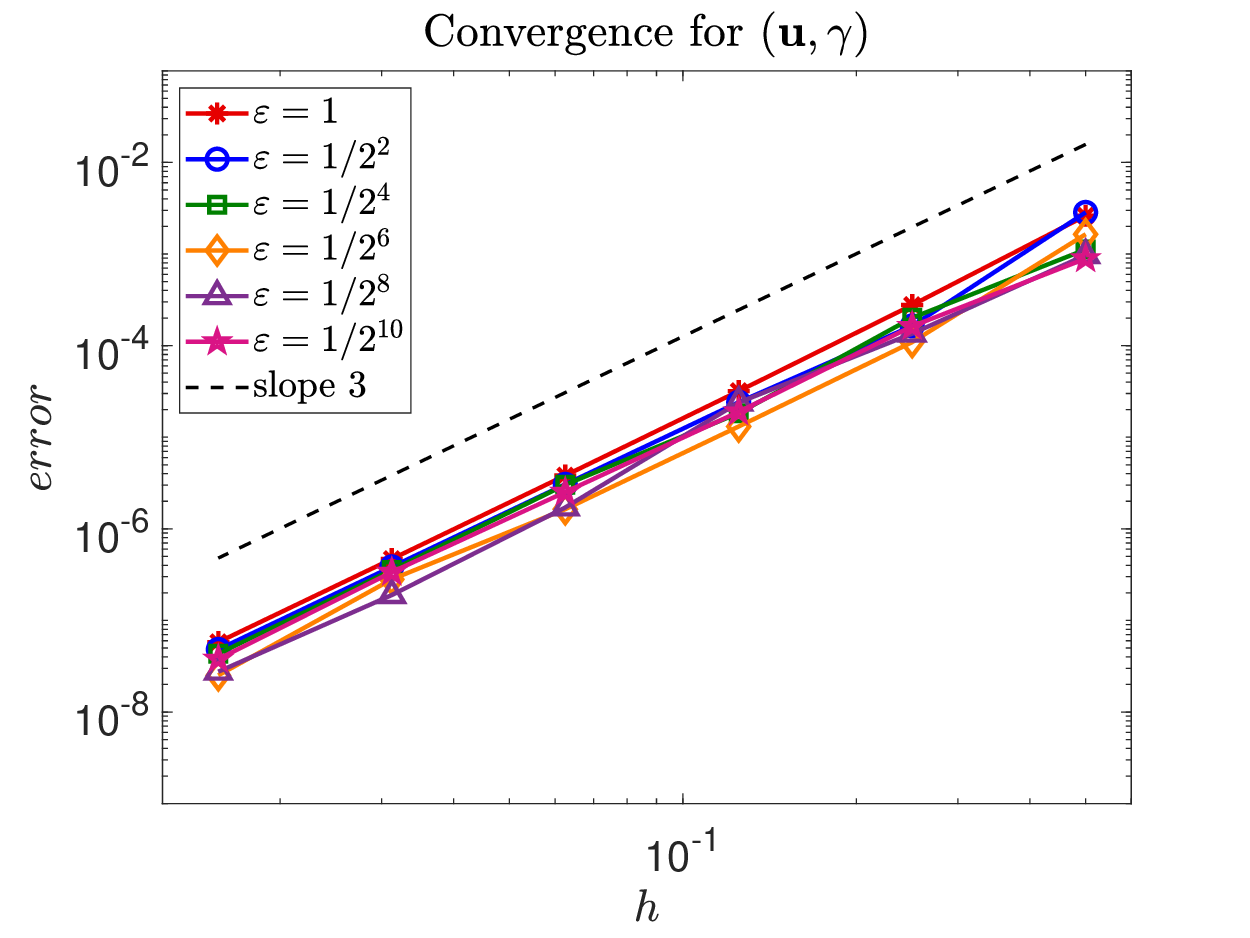}
    \par
    \includegraphics[height=3.8cm,width=5.8cm]{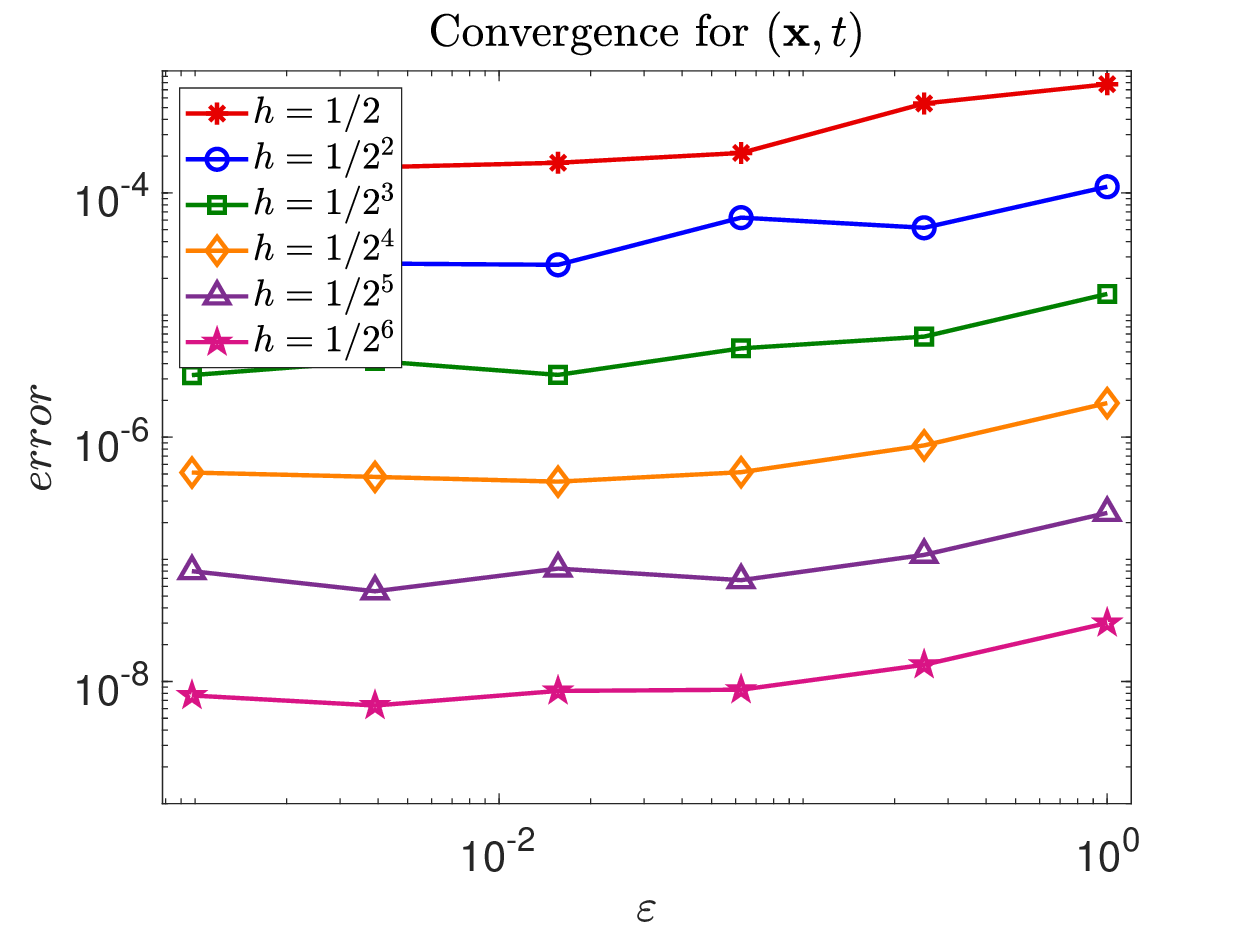}
    \hfill
    \includegraphics[height=3.8cm,width=5.8cm]{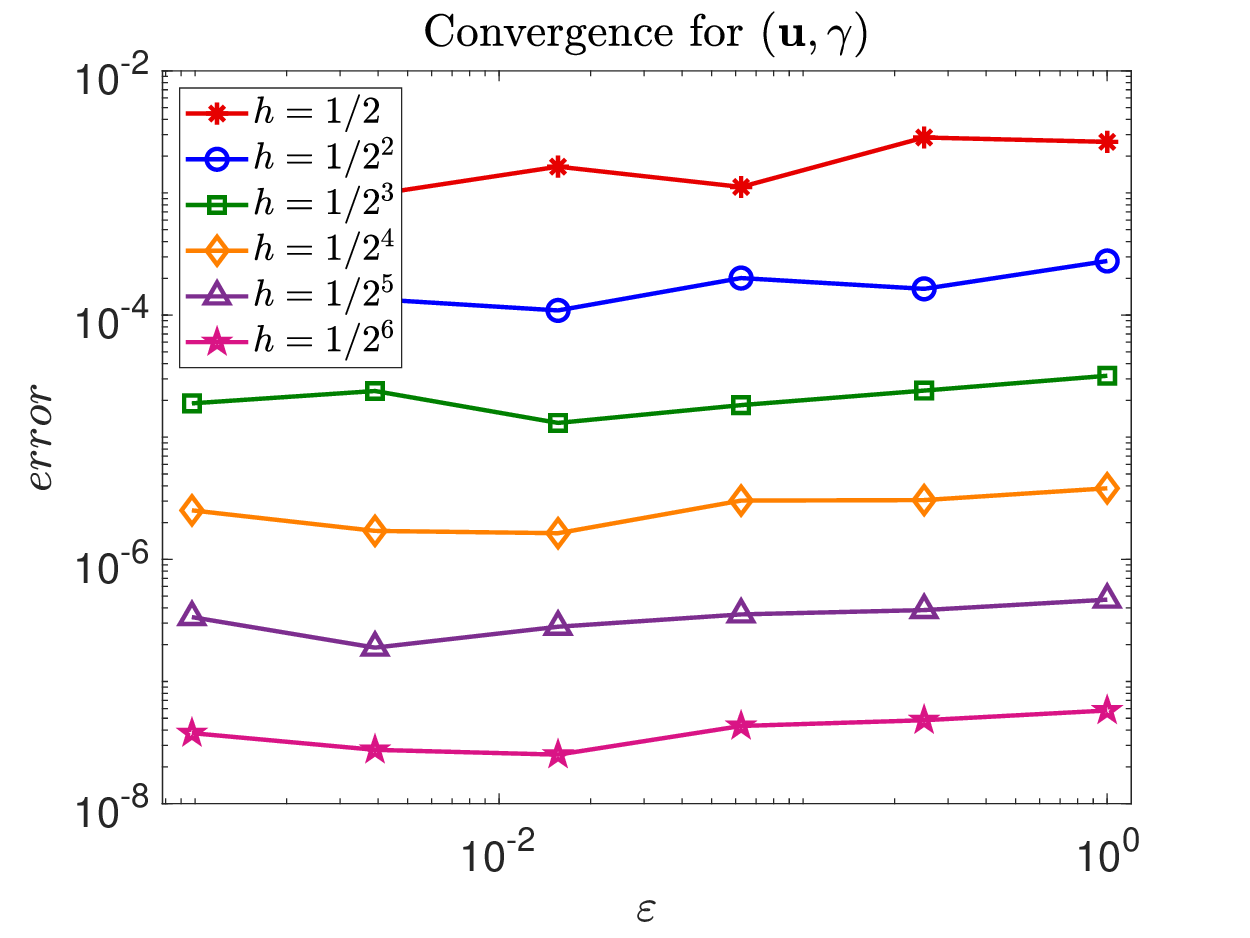}
   \caption{Example 4. The time error $err_{\bx}$ and $err_{\bv}$ about different $\eps$ (top) and various $h$ (bottom) for EI3.}
   \label{fig-4-1}
\end{figure}

\begin{figure}[htbp]
    \centering
    \includegraphics[height=3.8cm,width=5.8cm]{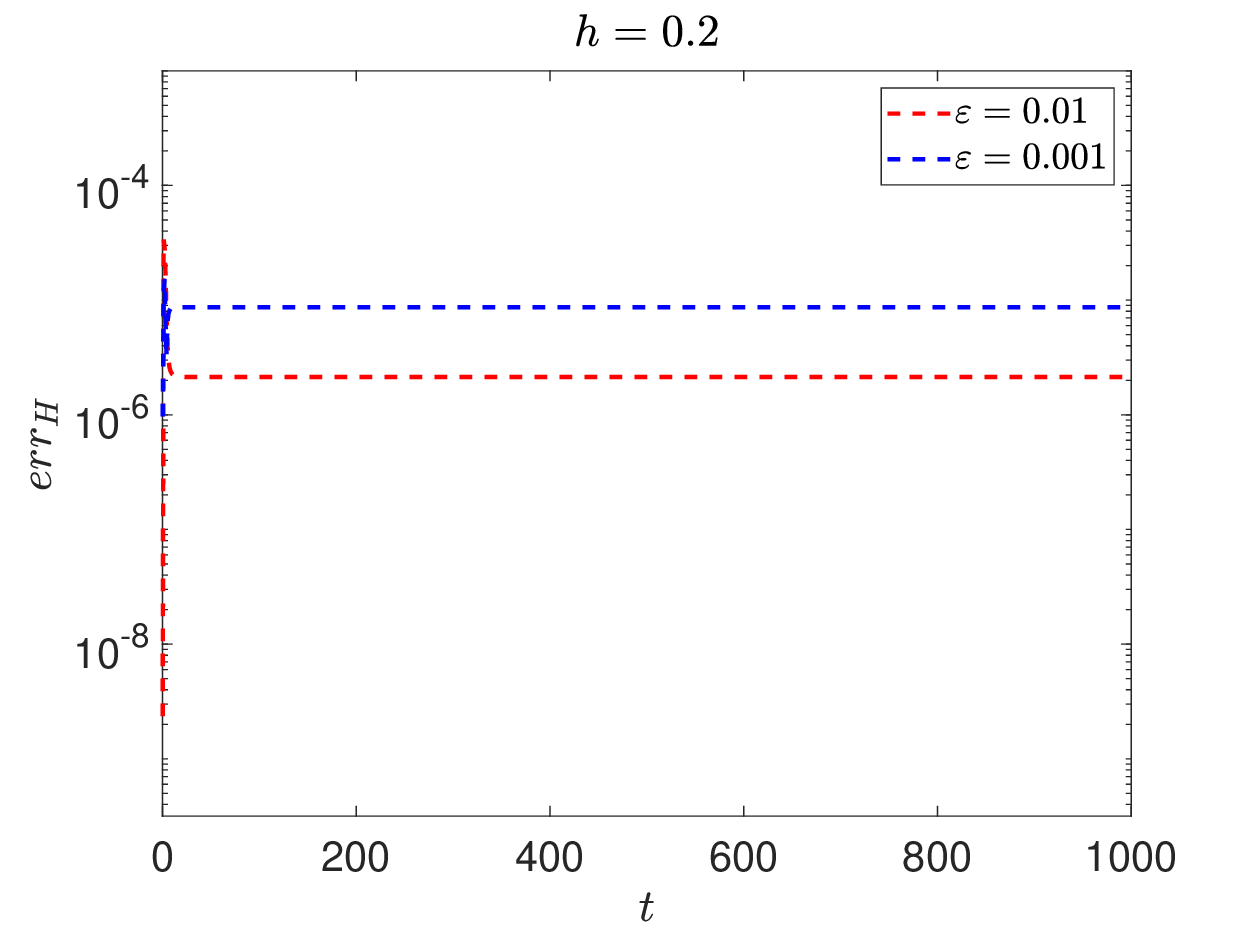}
    \hfill
    \includegraphics[height=3.8cm,width=5.8cm]{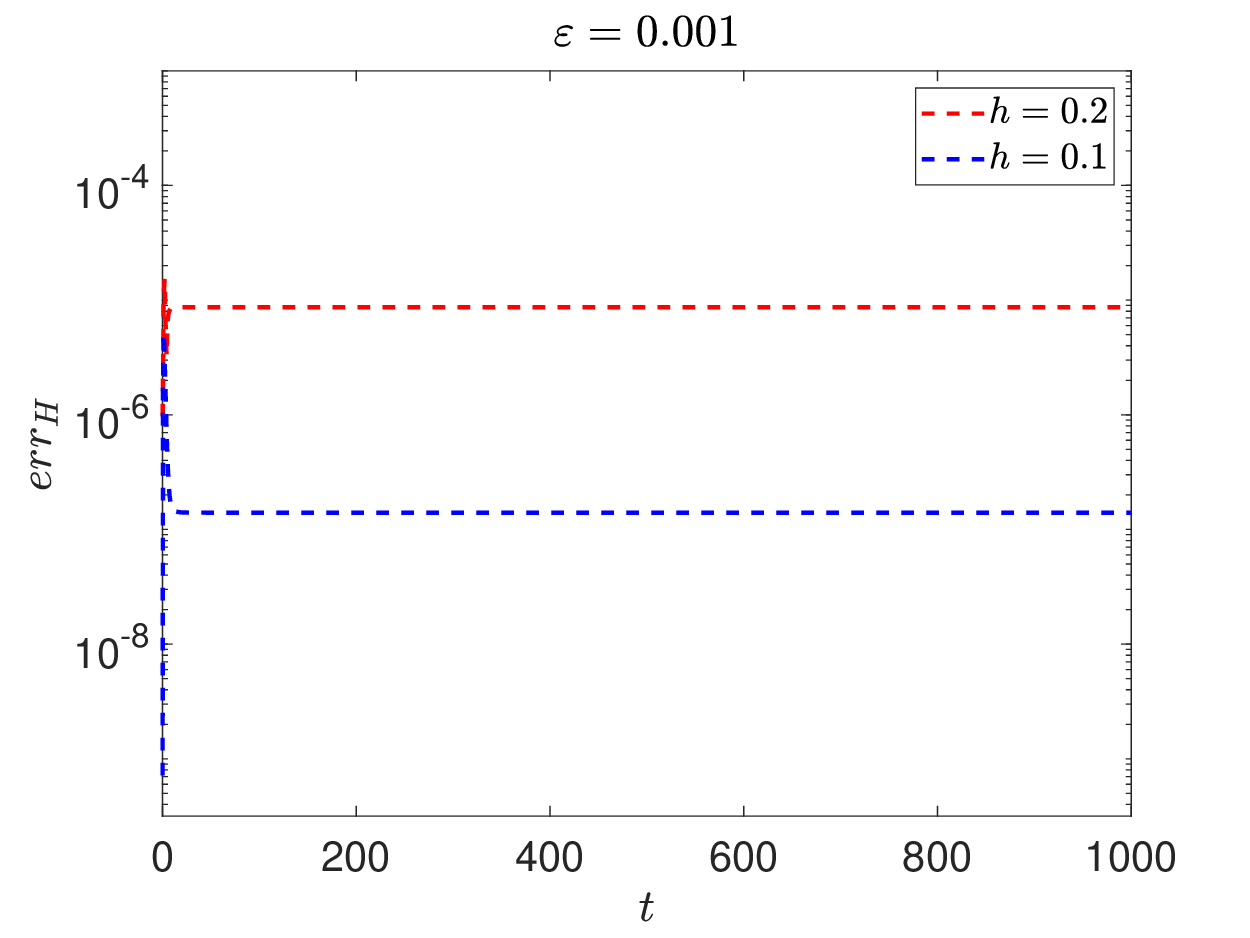}
   \caption{Example 4. The energy error of EI3 under different $\eps$ and $h$.}
   \label{fig-4-2}
\end{figure}

\section{Conclusion}

In this work, we propose a novel family of high-order, uniformly accurate exponential integrators for two-dimensional CPD in a strong magnetic field, constructed via a local linear extension technique. The CPD system is decomposed into linear and nonlinear parts, and a taylor expansion of the nonlinear part in the original variables is used to introduce higher-degree polynomial variables, yielding an augmented, higher-dimensional formulation. Classical exponential integrators are then applied to this augmented system at each time step. Because the higher-dimensional structure captures richer information through the taylor components, the resulting schemes are fully explicit and achieve arbitrarily high uniform accuracy without order conditions, offering superior computational efficiency. 
 These theoretical and numerical findings are extended to the relativistic CPD under maximal ordering scaling.


%
\section*{Conflict of interest}
 The authors declare that they have no conflict of interest.



\end{document}